\newcommand{\bol}{\boldsymbol}
\newcommand{\ney}{\boldsymbol{y}}                          
\newcommand{\nex}{\boldsymbol{x}}           
\newcommand{\bnex}{\bold{x}}
\newcommand{\ner}{\boldsymbol{r}}
\newcommand{\de}{\,\mathrm{d}}                               
\newcommand{\e}{\operatorname{e}}
\newcommand{\andtext}{\quad\mbox{and}\quad}
\newcommand{\p}{\partial}
\newcommand{\real}{\mathrm{Re}\,}    
\newcommand{\imag}{\mathrm{Im}\,}
\newcommand{\lf}{\left}
\newcommand{\rg}{\right}
\newcommand{\R}{\mathbb{R}}       
\newcommand{\C}{\mathbb{C}}
\newcommand{\bxi}{\boldsymbol\xi}  
\newcommand{\bnor}{\bold n}          
\newcommand{\nor}{\boldsymbol n}
\newtheorem{theorem}{Theorem}[section]
\newtheorem{lemma}[theorem]{Lemma}
\newtheorem{remark}[theorem]{Remark}
\title{Harmonic density interpolation methods for high-order evaluation of \\ Laplace layer potentials in 2D and 3D}
\author[1,2]{Carlos P\'erez-Arancibia\thanks{cperezar@mit.edu}}
\author[1]{Luiz M. Faria\thanks{lfaria@mit.edu}}
\affil[1]{\small{Department of Mathematics, Massachusetts Institute of Technology}}
\affil[2]{\small{Institute for Mathematical and Computational Engineering, School of Engineering and Faculty of Mathematics, Pontificia Universidad Cat\'olica de Chile}}
\author[3]{Catalin Turc\thanks{catalin.c.turc@njit.edu}}
\affil[3]{\small{Department of Mathematical Sciences, New Jersey Institute of Technology}}
\date{\today}
\begin{document}

\maketitle

\begin{abstract}
We present  an effective harmonic density interpolation method for the numerical evaluation of singular and nearly singular Laplace boundary integral operators and layer potentials in two and three spatial dimensions. The method relies on the use of Green's third identity and local Taylor-like interpolations of density functions in terms of harmonic polynomials. The proposed technique effectively regularizes the singularities present in boundary integral operators and layer potentials, and recasts the latter in terms of integrands that are  bounded or even more regular, depending on the order of   the density interpolation. The resulting boundary integrals can then be easily, accurately, and inexpensively evaluated by means of  standard quadrature rules.  A variety of numerical examples  demonstrate the effectiveness of the technique when used in conjunction with the classical trapezoidal rule (to integrate over smooth curves) in two-dimensions, and with a Chebyshev-type quadrature rule (to integrate over surfaces  given as unions of non-overlapping  quadrilateral patches) in three-dimensions. 
\end{abstract}
 \textbf{Keywords}: Laplace equation, layer potentials, boundary integral operators,  Taylor interpolation, harmonic polynomials, Nystr\"om method\\
   
\noindent \textbf{AMS subject classifications}: 
 65N38, 35J05, 65T40, 65F08.

\section{Introduction}
The numerical solution of linear constant coefficient partial differential equations (PDEs) by boundary integral equation (BIE) methods provide several advantages
over methods based on volumetric discretizations such as finite difference and finite element methods. 
Indeed, BIE methods can easily handle unbounded domains as they are based on the discretization of the relevant physical
boundaries, giving rise to well-conditioned linear systems of reduced
dimensionality. Although dense, these linear systems can be efficiently solved by means of 
iterative  solvers in conjunction with fast methods such as the Fast Multipole Method~\cite{greengard1987fast}, $\mathcal{H}$-matrix compression~\cite{hackbusch1989fast}, or FFTs based on equivalent-sources methods~\cite{Bruno:2001ima,phillips1997precorrected}. 

One of the main attractive features of BIE discretizations is their ability to deliver high-order convergence. The latter is typically achieved by specialized quadratures carefully designed for the resolution of the singular behavior of the kernels of the operators that enter the BIE. As is known BIE rely on layer potentials defined in terms of the Green function of the underlying PDE and its derivatives. The boundary values of these layer potentials, in turn, give rise to certain boundary integral operators (BIOs) which enter the BIE. For instance, in the case of the Laplace equation under consideration, the  boundary values of the single- and double-layer potentials and their normal derivatives give rise to four BIOs; single-layer, double-layer, adjoint double-layer, and hyper-singular operators. These four BIOs constitute the building blocks of all possible BIE formulations of Laplace equation.  The kernels of the four Laplace BIOs feature the Green function of the Laplace equation and its various normal derivatives, and thus they exhibit different singular behaviors. Indeed, in the case of regular boundaries, (1) the single-layer operators feature weakly singular (integrable) kernels in both two and three dimensions, (2) the double-layer operators feature regular kernels in two dimensions, and weakly singular kernels in three dimensions, and (3) the hyper-singular operators in both two and three dimensions feature boundary integrals that have to be understood in the sense of Hadamard finite-parts integrals. In addition, the evaluation of layer potentials near (but not on) boundaries is faced with the significant challenge of resolving nearly singular kernels. 

\subsection{Previous work}
{Significant efforts have been directed toward the development of high-order quadrature rules for the numerical evaluation of singular and nearly singular boundary integral operators over the last decades. Most of these quadratures are geared toward two dimensional applications and rely on analytical resolution of singularities. As such, the quadratures become more involved as the kernels become more singular.  A comprehensive methodology for high-order evaluations of BIOs on smooth (two-dimensional) curves, which relies on global trigonometric interpolation, logarithmic kernel singularity splitting and resolution of logarithmic singularities, was put forth by Martensen~\cite{MARTENSEN:1963} and Kussmaul~\cite{KUSSMAUL:1969}\footnote{A concise presentation of the Martensen-Kussmaul quadrature rule can be found in~\cite[Chapter 2]{COLTON:2012}}. This method  was subsequently extended by Kress to deliver high-order Nystr\"om discretizations of boundary integral equations posed on curves with corners~\cite{Kress:1990vm} and to boundary integral equations involving  singular kernels that arise from a regularization of the  hypersingular operator~\cite{Kress:1995}. Other high-order, yet not spectrally accurate, methods---some of which are applicable to a more general class  curves in the plane---have been presented in numerous contributions~\cite{alpert1999hybrid,Kapur:1997fi,Helsing:2008gm,Kolm:2001bt,Mayo:1985he}. However, the treatment of nearly singular operators in the context of the methods mentioned above requires post-processing steps such as oversampling of densities and interpolation/extrapolation procedures.

 Motivated in part by fluids applications involving simulation of Stokes flow in multiply connected domains, recent efforts targeted the development of high-order quadratures that can evaluate seamlessly both singular and nearly singular Laplace kernel interactions. Notably, Quadrature by Expansion (QBX) methods have successfully bypassed the need to directly deal with singular integrands~\cite{klockner2013quadrature} by relying on kernel-specific expansions (based on addition theorems for Green's functions) of the layer potentials around points (centers) near the boundary. It is also worth mentioning the second-order accurate scheme of Beale et al.~\cite{beale2001method} for evaluation of  nearly singular integrals in 2D, which is based on a certain mollification and asymptotic correction of the integral kernel. Substantially more accurate and sophisticated numerical procedures to deal with nearly singular integrals were developed by Helsing et al.~\cite{Helsing:2008tn} (which was later improved in~\cite{Barnett:2015kg}) and by Barnett~\cite{barnett2015spectrally}, the latter of which provided the basis for QBX methods~\cite{klockner2013quadrature}. However, besides the fact that some of the more recent  proposed approaches for singular and nearly singular integrals do not easily generalize to three-dimensions (3D), we believe it is fair to claim that, given their level of sophistication, these methods are  arguably difficult to implement. Indeed, the high accuracy achieved by these methods is a result of a judicious combination of involved techniques encompassing  grid oversampling of the density,  kernel approximations and/or expansions, and high-order polynomial interpolation. In addition, and perhaps more importantly, all the aforementioned methods require a careful selection of  several  parameters in order to garner their optimal performance. The selection of the parameters needed in 2D QBX (e.g. location of off-surface centers, order of expansions in the kernel representation, oversampling factors), for instance, can be streamlined by resorting to local kernel expansions in a version of QBX referred to Adaptive Quadrature by Expansion (AQBX)~\cite{klinteberg2017adaptive}.}
	
There are significantly fewer high-order methods for the discretization 3D Laplace BIEs with respect to their 2D counterparts. This fact can be explained by the additional challenges 	that pertain to 3D, amongst which we mention (a) the more severe nature of the singularities of the Green functions and their derivatives, (b) the technical difficulties associated with the reliable generation of high-order surface representations, and (c) the need to employ patches in order to describe surfaces as well as local approximations of densities. First, we remark that the singularities of BIOs---in particular that of the hyper-singular one---are more challenging for Nystr\"om discretizations than for Galerkin (BEM) discretizations, as weak formulations of Laplace BIEs require (double) integration of weak singularities only. We also note that with regards to the latter aspect (c), the use of non-overlapping patches (which is the most natural in our opinion) inherently gives rise to nearly singular integrands corresponding to the scenario when the target and integration points are nearby yet they belong to adjacent but different patches. The resolution of near singularities, encountered mostly in Galerkin discretizations, can be effected by double exponential (e.g., $\operatorname{sinh}$, $\operatorname{tanh}$) changes of variables~\cite{takahasi1974double} or using the singular quadratures of Sauter and Schwab~\cite{sauter2010boundary}. Singularity subtraction techniques are certainly the most common
approach for evaluating singular integrals arising from BIEs and,
therefore, numerous low-order variants of these techniques are available in the
literature not only for the Laplace equation but also for the
Helmholtz, Stokes flow and elastostatic
equations. A notable exception is the high-order version of the singularity subtraction approach proposed in~\cite{2013arXiv1301.7276H} for Nystr\"om discretizations of BIOs on tori. Regardless of discretization type, most 3D high-order quadratures of BIOs are case specific, that is, they are designed to specifically treat a certain type of kernel singularity or near-singularity, and become increasingly involved as the kernel singularity becomes more severe.  Owing to these difficulties, the implementation of high-order numerical methods for evaluation 3D Laplace BIEs is complex, arguably leading to a dearth of available open-source software (BEMpp~\cite{smigaj2015solving}, \texttt{https://bempp.com}, being a notable exception) and ultimately limiting wider use in the engineering community. Without being exhaustive, in what follows we review several high-order Nystr\"om numerical schemes for solution of 3D Laplace BIEs.

A spectrally accurate Nystr\"om method for 3D BIEs in the spirit of Martensen and Kussmaul was developed by Ganesh et al.~\cite{Ganesh:2004cw} for surfaces diffeomorphic with spheres. This approach makes use of the diffeomorphism between the surface of integration and the unit sphere, global interpolation of densities in terms of spherical harmonics, as well as addition theorems that lead to analytic resolution of weak kernel singularities. Stronger kernel singularities can be handled via integration by parts techniques. We mention that variants of this approach have been used in Stokes flow simulations~\cite{Veerapaneni:2011kj}. A more general high-order Nystr\"om method for the solution of BIEs on smooth surfaces is the (provably convergent) method of Bruno at al.~\cite{bruno2013convergence,Bruno:2001ima,bruno2001surface}. The essential ingredients of this method consist of (1)  use of an explicit atlas of overlapping patches and associated partition of unity functions and (2) use of polar change of variables to analytically resolve weak kernel singularities. The method is quite general, as it was demonstrated in~\cite{bruno2007accurate,Ying:2004dw} that surfaces of engineering relevance can be accurately approximated by means of smooth overlapping-patch manifold representations. The change to polar coordinates approach was extended and refined in~\cite{Bremer2012ANM} via additional affine mappings and precomputed quadrature tables, to treat cases when the surface parametrizations are highly non-conformal. This approach was extended to deal with singular integral operators in subsequent contributions~\cite{Bruno:2012dx,Bruno:2013if,Ying:2006gy}. In turn, several post processing techniques  have been proposed in the literature to deal with nearly singular-integrals in 3D. We mention first the ``extraction" technique of Schwab et al.~\cite{Schwab:bj} which, using Taylor expansions along the unit normal to the surface, allows for approximations of layer potentials near and on the surface. The higher-order terms in the Taylor expansions, however,  involve high-order normal derivatives which incur the significant overhead of evaluating several strongly singular boundary integral operators.  Yet another procedure to deal with nearly singular integrands in 3D was presented in~\cite{Ying:2006gy}. Unlike~\cite{Schwab:bj}, this procedure makes used of both on- and off-surface values (at points sufficiently far away from the surface with respect to the mesh size) to interpolate layer potentials near the boundary. A  simple method based on a kernel regularization that can achieve high-order evaluations of both singular and nearly singular Laplace boundary integrals  was put forth by Beale in~\cite{beale2004grid} and then  further developed in~\cite{2016CCoPh..20..733B} and even extended to Stokes flow equations in~\cite{tlupova2013nearly}. This kernel regularization  delivers third-order methods for on-surface as well as near-surface evaluations of the single and double-layer Laplace BIOs. One of the main advantages of  Beale's kernel regularization method is that it does not require a surface parametrization nor a surface triangulations. Finally, we mention the very recently introduced extensions of QBX methods to evaluations of 3D boundary integral operators/ layer potentials~\cite{afKlinteberg:2016ed,siegel2017local}. Just like the kernel regularization methods~\cite{2016CCoPh..20..733B}, QBX methods can seamlessly treat both weakly singular and nearly singular boundary integral operators/layer potentials in 3D; however, evaluations of adjoint double-layer and hyper-singular boundary integral operators or their Stokes counterparts with commensurate orders of singularity are not presented in~\cite{afKlinteberg:2016ed,siegel2017local}.
 
 \subsection{Scope of this contribution}
 This paper presents a novel harmonic density interpolation method that regularizes the kernel singularities of the four BIOs associated with the Laplace equation in smooth two- and three-dimensional  domains. Unlike singularity subtraction techniques, our method achieves regularization of the integral kernels by targeting the densities of the BIOs instead of the kernels themselves. Low-order precursors of regularization techniques that target the integral densities are available in the
literature for BIOs associated with various linear constant coefficient PDEs~\cite{Klaseboer:2012ks,liu1991some,liu1999new,perez2018plane,sun2014robust}. In certain aspects the proposed method is a generalization of the third-order regularization technique  introduced in~\cite{perez2018plane} for the 2D Helmholtz combined field integral operators. The main contribution of this paper consists of developing a 
high-order local Taylor-like interpolation methodology based on harmonic polynomials that takes full advantage of the boundary regularity of the density functions that enter Laplace BIOs. Our approach is universal, in the sense that is applicable to evaluations of all four Laplace BIOs irrespective of the singularity of their associated kernels.  In detail, in order to evaluate the single-layer operator in $\R^d$, $d=2,3$,
\begin{equation*}
S[\varphi](\nex)=\int_{\Gamma}G(\nex,\ney)\varphi(\ney)\de s(\ney),\quad \nex\in\Gamma\subset\R^d,
\end{equation*} 
for example, where $G$ is the free-space Laplace Green function and $\varphi$ is a regular density function defined on a closed boundary $\Gamma$, we make use of a family of smooth harmonic functions $U:\R^d\times\Gamma\to \R$ and Green's third identities to recast $S$ in the form
\begin{equation*}
S[\varphi](\nex)=\frac{1}{2}U(\nex,\nex)+\int_{\Gamma}\frac{\partial G(\nex,\ney)}{\partial n(\ney)}U(\ney,\nex)\de s(\ney)+\int_\Gamma G(\nex,\ney)\lf[\varphi(\ney)-\partial_n^{\ney} U(\ney,\nex)\rg]\de s(\ney).
\end{equation*} 
We  require then $U$ to be such that both expressions $U(\ney,\nex)$ and $\varphi(\ney)-\partial_n^{\ney} U(\ney,\nex)$, where $\partial_n^{\ney}$ denotes the normal derivative with respect to $\ney$, vanish to prescribed orders as $\ney\to\nex$. The key insight of our method is to seek families of functions $U:\R^d\times\Gamma\to \R$ of the form
\begin{equation*}  
  U(\ner,\nex) = \sum_{j=0}^J {c}_j(\nex){H}_j(\ner - \nex),\qquad\ner\in\R^d,\nex\in\Gamma,
\end{equation*}
where $H_j$ are homogeneous harmonic polynomials of order $j$. Thus, the requirement that the expressions $U(\ney,\nex)$ and $\varphi(\ney)-\partial_n^{\ney} U(\ney,\nex)$ vanish simultaneously to high-order as $\ney\to\nex, \nex\in\Gamma, \ney\in\Gamma,$ is equivalent to solving for each point $\nex\in\Gamma$ a local Taylor-like interpolation problem of prescribed order involving the density function $\varphi$ and Dirichlet and Neumann boundary values of harmonic polynomials in the above definition of the functions $U$.  In 2D, such families of functions $U$ can be easily obtained to arbitrarily high-order by means of complex variable techniques. In 3D, in turn, their construction is significantly more involved, yet it is relatively easy to produce  third-order interpolating functions $U$.  The proposed technique extends naturally to deal with nearly singular integrands, allowing layer
potentials and their gradients to be evaluated at target points
arbitrarily close to the boundaries without compromising numerical accuracy. The high-order harmonic  density interpolation method renders the Laplace BIOs directly amenable to evaluations  by standard, readily implementable quadrature rules (e.g. trapezoidal rule in 2D, and Fej\'er, Clenshaw-Curtis quadrature in 3D) as the BIOs feature only regular (at least continuous for weakly singular BIOs and bounded for the hypersingular BIO in 3D) integrands. The integration of our method within the FMM framework is fairly straightforward and will be presented in a future contribution.

We illustrate the effectiveness and simplicity of the proposed high-order harmonic density interpolation technique through a variety of 2D and 3D numerical results concerning evaluation of singular and nearly singular Laplace BIOs and layer potentials defined on closed, smooth curves and surfaces. We show, in
particular, that for sufficiently high interpolation orders~$M$
(which achieve integrands that vanish as $|\nex-\ney|^{M+1}$ at the kernel
singularity $\nex=\ney$)  our technique used in conjunction with the simple trapezoidal quadrature leads to discretization errors in the evaluation of the two-dimensional single-layer BIO of the order $O(h^{2M+3})$ for $M$ even, and respectively $O(h^{2M+1})$ for 
$M$ odd, where $h>0$ denotes the grid spacing. In addition, the same methodology leads to evaluations of the 2D hyper-singular BIO that converge exponentially fast. For 3D problems, in turn, we provide the explicit construction of the harmonic polynomial interpolants  amenable to express the single- and double-layer operators in terms of  continuous integrands, and the adjoint double-layer and hypersingular operators in terms of bounded integrands. Relying on a non-overlapping quadrilateral patch manifold representation of 3D surfaces, underlying local Chebyshev grids, and a (spectrally accurate) Fej\'er quadrature rule~\cite{bruno2018chebyshev,turc2011efficient}, we demonstrate through numerical examples that  the proposed harmonic density interpolation technique yields a third-order accurate Nystr\"om method for BIEs involving the Laplace single- and double-layer operators, and a second-order accurate Nystr\"om method BIEs involving the Laplace adjoint-double layer and hypersingular operators. The levels of accuracy achieved by our method for three dimensional problems (e.g., $10^{-5}$) are competitive for engineering applications, especially in the light of the straightforward implementation of the method. A preliminary Matlab implementation of the proposed methodology for 3D problems is available at: 
\begin{center}
\texttt{https://github.com/caperezar/HDIMethod}.
\end{center}

We also mention that the proposed harmonic density interpolation  can easily accommodate evaluations of nearly singular boundary integral operators. Extensions of our method to treatment of singular and nearly singular BIOs arising in acoustics, electromagnetics, and elastodynamics are currently being pursued; for those problems the local Taylor-like interpolation problems will rely on plane-waves rather than harmonic polynomials.

The structure of this paper is as follows. \Cref{sec:prelim,eq:hif,sec:eval-layer-potent} provide a comprehensive description of the proposed technique for 2D BIEs. The details on the construction of the harmonic expansion functions using complex variable techniques are given in \Cref{eq:hif}. \Cref{sec:eval-layer-potent}  presents the extension of the technique for the evaluation of layer potentials at target points near the boundary. The proposed technique for corresponding 3D problems is described in \Cref{sec:3D}. \Cref{sec:numerics}, finally, presents a variety of numerical examples in two and three spatial dimensions, including comparisons with Beale~et~al.~\cite{beale2001method,ying2013fast} and QBX~\cite{klockner2013quadrature} methods in 2D.

\section{High-order kernel singularity regularization for 2D problems}\label{sec:prelim}

The single-layer ($S$), double-layer ($K$), adjoint double-layer ($K'$) and hypersingular ($N$) operators of Calder\'on calculus associated with the Laplace equation in $\R^2$ are given by 
\begin{subequations}\begin{align}
S[\varphi](\nex) :=&\ \int_{\Gamma} G(\nex,\ney)\varphi(\ney)\de s(\ney), \\ 
K'[\varphi](\nex) :=&\ \int_{\Gamma} \frac{\p G(\nex,\ney)}{\p\nor(\nex)}\varphi(\ney)\de s(\ney),\label{eq:singe}\\
K[\varphi](\nex) :=&\ \int_{\Gamma} \frac{\p G(\nex,\ney)}{\p\nor(\ney)}\varphi(\ney)\de s(\ney),\\ 
N[\varphi](\nex) :=&\ \mathrm{f.p.}\int_{\Gamma} \frac{\p^2 G(\nex,\ney)}{\p\nor(\nex)\p\nor(\ney)}\varphi(\ney)\de s(\ney),\label{eq:hypersingular}
\end{align}\label{eq:int_op}\end{subequations}
for  $\nex\in \Gamma$, where
\begin{equation}
    G(\nex,\ney):=-\frac{1}{2\pi}\log|\nex-\ney| \label{eq:GF}
\end{equation}
is the free-space Green function for the Laplace equation in $\R^2$,
and f.p. in \cref{eq:hypersingular} stands
for the Hadamard finite-part integral.  The curve $\Gamma$ in 2D is assumed
to be a simple closed curve that admits a regular periodic counterclockwise  parametrization
\begin{equation}\Gamma=\{\bold x(t)=\lf({\rm x}_1(t),{\rm x}_2(t)\rg): t\in[0,2\pi)\}\label{eq:param}.\end{equation}
In the parameter space the four integral operators defined in~\cref{eq:int_op} will be denoted by 
$\tilde S[\phi]$, $\tilde K[\phi]$, $\tilde K'[\phi]$ and $\tilde N[\phi]$, where  $\phi(t)=\varphi(\bnex(t)):[0,2\pi)\to\R$. For presentation simplicity throughout this paper both the density function~$\phi$ and the curve parametrization $\bnex$ are assumed to be analytic functions of $t\in[0,2\pi]$.  Note that under the assumptions introduced above, and in the particular case of the Laplace equation in two spatial dimensions, the integral kernels in both double-layer ($K$) and adjoint double-layer ($K'$) operators are smooth functions (see~\Cref{lem:cond_1,lem:cond_2}), so their high-order numerical evaluation can be achieved by application of standard quadrature rules (e.g. trapezoidal rule).  A more general analysis can be carried out to significantly relax the strong analytic regularity assumptions made on $\Gamma$ and $\phi$ (see Remark~\ref{rem:analy} below).

\subsection{Hypersingular operator}\label{sec:singulartiy_sub}
Consider first the hypersingular  operator $ N$, defined in \cref{eq:hypersingular}, applied to a smooth density function $\varphi:\Gamma\to\R$, and evaluated at a point $\nex=\bnex(t) \in \Gamma$. The   proposed technique relies on introducing a known  smooth function $U: \R^2\times\Gamma \to \R$, with Dirichlet and Neumann traces denoted by $P(\tau,t) = {U}(\mathbf{x}(\tau),\bnex(t))$ and $Q(\tau,t) = \nabla  U(\mathbf{x}(\tau),\bnex(t))\cdot\bnor(\tau)$ for $\tau,t\in[0,2\pi]$, respectively, such that
\begin{align}
  \label{eq:2}
  \tilde{N}[\phi](t) = \tilde{N}[\phi(\cdot) - P(\cdot,t)](t) + \tilde{N}[P(\cdot,t)](t),
\end{align}
can be evaluated by integrating regular (non-singular) functions only. Given the
strong $O((s-\tau)^{-2})$ singularity of the hypersingular operator
kernel, it is thus necessary that $\phi(\tau) -P(\tau,t)$ vanishes to high
order as $\tau \to t$. Additionally, we need
$\tilde{N}[P(\cdot,t)](t)$ to admit a representation in terms of
smooth integrands as well. The key idea of our method is to choose $ U$ to be harmonic in the first variable, so that by Green's third
identity~\cite{martin2006multiple} we have
\begin{align}
\tilde N [P(\cdot,t)](t) &=-\frac{Q(t,t)}{2} + \tilde K'\left[Q(\cdot,t)\right](t).
\end{align}
Therefore, we  can then express the hypersingular operator as
\begin{align}
  \label{eq:3}
\tilde N[\phi](t) &=-\frac{Q(t,t)}{2} + \tilde N [\phi-P(\cdot,t)](t) + \tilde K'\lf[Q(\cdot,t)\rg](t),
\end{align}
where the integrals on the right-hand-side involve functions with smoothness that can be controlled by an appropriate choice of $U$, i.e., by requiring  $\phi(\tau)-P(\tau,t)$ to vanish to high enough order as $\tau\to t$. 
In detail, for a prescribed {\emph{density interpolation order} $M\geq 0$, we require $P$ to satisfy the following  condition
\begin{equation}
P(\tau,t) = \phi(\tau) + O\lf(|\tau-t|^{M+1}\rg)\quad\mbox{as}\quad \tau\to t. \label{eq:cond_1}
\end{equation}
We also require that the constants in the ``big-$O$" notation in equation~\eqref{eq:cond_1} be bounded uniformly in $t$. Relying on the smoothness of both $\phi$ and $P(\cdot,t)$ it easy to show (using Taylor's theorem) that a sufficient condition for~\cref{eq:cond_1}  to hold is that $P$ satisfies
\begin{align}
  \label{eq:curve-condition-1}
  \lim_{\tau \to t}  \frac{\p^m}{\p\tau^m} \left\{P(\tau,t) - \phi(\tau) \right\} = 0 \quad \mbox{for} \quad m=0,\ldots, M,
\end{align}
or equivalently, that the $m$-th Taylor expansion coefficient of $P(\cdot,t)$ equals the $m$-th order derivatives of $\phi$ at $\tau=t$. Note that on account of the smoothness of the kernel of the operator $K'$, we do not impose any vanishing conditions on the expressions $Q(\tau,t)$ as $\tau\to t$. Remarkably, and as we will explain in the next sections, our construction of the harmonic functions $U$ that meet the requirement~\eqref{eq:curve-condition-1} achieves as a byproduct a vanishing condition of order $M$ on $Q(\tau,t)$.

The following lemma shows that choosing $M\geq 1$ in the conditions~\cref{eq:curve-condition-1} for~$P$, the hypersingular operator can be  expressed in terms of integrals of (smooth) $2\pi$-periodic  analytic functions.
\begin{lemma}\label{lem:cond_1} Let $\Gamma\subset\R^2$ be a closed simple analytic curve and  $\phi$ be a real $2\pi$-periodic analytic function. Suppose  there exists $U_N:\R^2\times\Gamma\to \R$, $U_N(\cdot,\bnex(t))$ harmonic in $\R^2$ for all $t\in[0,2\pi]$, such that its Dirichlet trace $P_N(\cdot,t) = U_N(\bnex(\cdot),\bnex(t)):[0,2\pi]\to\R$ satisfies \cref{eq:curve-condition-1} for $M\geq 1$. Then, letting  $Q_N(\cdot,t) =\nabla U_N(\cdot,\bnex(t))\cdot \bnor(\cdot)$ denote the Neumann trace of $U_N$,  the hypersingular operator can be expressed 
as
\begin{equation}
\tilde N[\phi](t) =-\frac{Q_N(t,t)}{2}+ \int_{0}^{2\pi}\lf\{R^{(P)}_N(\tau,t)+R^{(Q)}_N(\tau,t)\rg\}|\bnex'(\tau)|\de\tau,\label{eq:hyper_rep}
\end{equation}
where 
\begin{equation}\label{eq:NP}
R^{(P)}_N(\tau,t) :=\lf\{\begin{array}{crr}\displaystyle\lf(\frac{\bnor(\tau)\cdot\bnor(t)}{2\pi|\bnex(t)-\bnex(\tau)|^2}-\frac{\{\bnex(t)-\bnex(\tau)\}\cdot \bnor(t)\ \{\bnex(t)-\bnex(\tau)\}\cdot \bnor(\tau)}{\pi |\bnex(t)-\bnex(\tau)|^4}\rg)&\medskip\\
\hfill\times\lf\{\phi(\tau)-P_N(\tau,t)\rg\}\quad\mbox{if}\ \tau\neq t,\bigskip\\
\displaystyle\frac{1}{2\pi|\bnex'(t)|^2}\frac{\p^2}{\p \tau^2}\lf\{\phi(\tau)-P_N(\tau,t)\rg\}\Big|_{\tau=t}\hfill\mbox{if}\ \tau=t,
\end{array}\rg.
\end{equation}
and 
\begin{equation}\label{eq:NQ}
 R^{(Q)}_N(\tau,t) :=\lf\{\begin{array}{rrr}\displaystyle-\frac{1}{2\pi}\frac{\{\bnex(t)-\bnex(\tau)\}\cdot \bnor(t)}{ |\bnex(t)-\bnex(\tau)|^2} Q_N(\tau,t)&\mbox{if}&\tau\neq t,\medskip\\
 \displaystyle\frac{\bnex''(t)\cdot \bnor(t)}{4\pi |\bnex'(t)|^2}Q_N(t,t) &\mbox{if}&\tau=t,
 \end{array}\rg.
\end{equation}
are real analytic $2\pi$-periodic functions of $\tau\in[0,2\pi]$ for all $t\in[0,2\pi]$.
\end{lemma}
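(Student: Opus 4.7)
My plan is to derive~\cref{eq:hyper_rep} by specializing the identity~\cref{eq:3}---already established via Green's third identity applied to the harmonic function $U_N(\cdot,\bnex(t))$---to the case $U = U_N$, and then verifying that the three boundary integrals so produced can be combined into the single integral of $\{R_N^{(P)} + R_N^{(Q)}\}\,|\bnex'|$ on the right-hand side of~\cref{eq:hyper_rep}. The additive term $-Q_N(t,t)/2$ comes directly from the corresponding term in~\cref{eq:3}, while $\tilde N[\phi - P_N(\cdot,t)](t)$ and $\tilde K'[Q_N(\cdot,t)](t)$ will be rewritten using the explicit Laplace kernels.

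The first substantive step is to replace the Hadamard finite part in $\tilde N[\phi - P_N(\cdot,t)](t)$ by an ordinary integral. Condition~\cref{eq:curve-condition-1} with $M \geq 1$ ensures that $\phi(\tau) - P_N(\tau,t)$ vanishes to at least second order in $(\tau - t)$ on the diagonal, while the hypersingular kernel is $O((\tau - t)^{-2})$ there; the product is therefore bounded, so the finite-part prescription collapses to the proper integral of $R_N^{(P)}(\tau,t)\,|\bnex'(\tau)|$ (that is, the $\tau\neq t$ branch of~\cref{eq:NP}). Since the kernel of $\tilde K'$ is already regular on an analytic curve, $\tilde K'[Q_N(\cdot,t)](t)$ is immediately identified with the integral of $R_N^{(Q)}(\tau,t)\,|\bnex'(\tau)|$ read off from the $\tau\neq t$ branch of~\cref{eq:NQ}.

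The key technical step is to show that $R_N^{(P)}(\cdot,t)$ and $R_N^{(Q)}(\cdot,t)$ extend to real analytic, $2\pi$-periodic functions of $\tau\in[0,2\pi]$, with diagonal values as specified in~\cref{eq:NP,eq:NQ}. Periodicity follows at once from the $2\pi$-periodicity of $\bnex$, $\phi$, $P_N(\cdot,t)$, and $Q_N(\cdot,t)$. For analyticity at $\tau = t$, I would rely on the factorizations $|\bnex(t) - \bnex(\tau)|^2 = (\tau-t)^2 H(\tau,t)$, $\{\bnex(t)-\bnex(\tau)\}\cdot\bnor(t) = (\tau-t)^2 A(\tau,t)$, and $\{\bnex(t)-\bnex(\tau)\}\cdot\bnor(\tau) = (\tau-t)^2 B(\tau,t)$, which hold on $[0,2\pi]^2$ by analyticity of $\Gamma$ (the last two invoking $\bnex'(t)\cdot\bnor(t) = 0$), with $H, A, B$ jointly real analytic and $H(t,t) = |\bnex'(t)|^2 > 0$. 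Substituting these into the kernels shows that the $|\bnex(t)-\bnex(\tau)|^{-4}$ piece of the hypersingular kernel is analytic (its $(\tau-t)^{-4}$ is cancelled by the $(\tau-t)^4$ coming from $A\cdot B$), that the adjoint kernel appearing in $R_N^{(Q)}$ is analytic (the $(\tau-t)^2$ from $A$ cancels the $(\tau-t)^2$ from $H$), and that the remaining $|\bnex(t)-\bnex(\tau)|^{-2}$ piece of the hypersingular kernel has a $(\tau-t)^{-2}$ pole which is exactly cancelled by the second-order zero of $\phi - P_N$---the point at which the hypothesis $M\geq 1$ is used. The diagonal values in~\cref{eq:NP,eq:NQ} then follow by matching leading Taylor coefficients; in particular $A(t,t) = -\tfrac12 \bnex''(t)\cdot\bnor(t)$ produces the $R_N^{(Q)}(t,t)$ formula, and identifying the $(\tau-t)^2$-coefficient of $\phi - P_N$ with $\tfrac12\partial_\tau^2\{\phi - P_N\}|_{\tau=t}$ via Taylor's theorem produces the $R_N^{(P)}(t,t)$ formula.

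The main obstacle I foresee is the bookkeeping required to recover the exact geometric constants appearing in~\cref{eq:NP,eq:NQ}: one must expand $H$, $A$, $B$ and $\phi - P_N$ to the right order and combine them with the numerical prefactors in the explicit kernels while keeping track of the extra $|\bnex'(\tau)|$ factor in the integrand. Beyond this, no new ideas are required past analyticity of $\bnex$ and $\phi$, the hypothesis $M \geq 1$, and Taylor's theorem.
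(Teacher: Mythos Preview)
Your proposal is correct and follows essentially the same route as the paper: derive the representation from identity~\eqref{eq:3}, identify the integrands with the explicit Laplace kernels, and establish analyticity of $R_N^{(P)}(\cdot,t)$ and $R_N^{(Q)}(\cdot,t)$ by factoring out the diagonal vanishing. The paper obtains the diagonal values by L'H\^opital's rule rather than Taylor matching, but these are equivalent.

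The one technical difference worth flagging is the factor you extract. You pull out $(\tau-t)^2$, whereas the paper pulls out $\sin^2\!\bigl(\tfrac{t-\tau}{2}\bigr)$. The latter is $2\pi$-periodic and vanishes exactly on $\tau\equiv t\pmod{2\pi}$, so the paper's quotients are globally analytic and $2\pi$-periodic, with the denominator quotient bounded away from zero everywhere. With your $(\tau-t)^2$, the quotient $H$ is indeed jointly analytic on $[0,2\pi]^2$, but it vanishes at the corners $(0,2\pi)$ and $(2\pi,0)$ (since $\bnex(0)=\bnex(2\pi)$ while $(\tau-t)^2=4\pi^2$ there), so your claim that the factorizations ``hold on $[0,2\pi]^2$'' with $H>0$ is only valid in a neighborhood of the diagonal. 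This does not break your argument---you have already secured periodicity separately, so local analyticity at $\tau=t$ together with the obvious analyticity away from $\tau\equiv t\pmod{2\pi}$ suffices---but the paper's periodic factor handles both issues at once and is the cleaner bookkeeping device in the $2\pi$-periodic setting.
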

\begin{proof} The explicit expressions for $R^{(P)}_N$ in~\cref{eq:NP} and
  $R^{(Q)}_N$ in~\cref{eq:NQ} when $\tau\neq t$ are obtained  from
  identity~\cref{eq:3}. Their  values at $\tau=t$, in turn, follow
  from a direct application of L'Hospital rule.

  In order to prove the analyticity of $R^{(P)}_N(\cdot,t)$ and
  $R^{(Q)}_N(\cdot,t)$ for $t\in[0,2\pi]$, we first note that since
  the curve parametrization $\bnex:[0,2\pi]\to \Gamma$ is analytic and
  $2\pi$-periodic, the following expressions hold:
  $|\bnex(t)-\bnex(\tau)|^2 =\sin^2(\frac{t-\tau}{2})a(\tau,t)$,
  $\{\bnex(t)-\bnex(\tau)\}\cdot \bnor(t) = \sin^2(\frac{t-\tau}{2})b(\tau,t)$
  and
  $\{\bnex(t)-\bnex(\tau)\}\cdot \bnor(\tau) =
  \sin^2(\frac{t-\tau}{2})c(\tau,t)$, where $a$, $b$ and $c$ are analytic
  $2\pi$-periodic functions. Furthermore, since the curve
  parametrization is regular we have $a(t,t) =|\bnex'(t)|^2\neq 0$. By
  the interpolation conditions~\cref{eq:curve-condition-1} and the
  fact that $\phi$ and $P_N(\cdot,t)$ are analytic $2\pi$-periodic
  functions (due to the fact that $P_N(\tau,t)$ is the trace on
  $\Gamma$ of the harmonic function $U_N(\cdot,\bnex(t))$, which is
  analytic in all of $\R^2$), on the other hand, it follows that
  $\phi(\tau)-P_N(\tau,t)=\sin^{M+1}(\frac{t-\tau}{2})d(\tau,t)$ with
  $d(\cdot,t)$ being an analytic $2\pi$-periodic function. Therefore,
  since $M\geq 1$ by hypothesis of the lemma, we conclude
  that~\cref{eq:NP} and~\cref{eq:NQ} are analytic $2\pi$-periodic
  functions of $\tau$ with a removable singularity at $\tau=t$. The
  proof is now complete.
\end{proof}

\subsection{Single-layer operator}
A calculation similar to the one shown above for the hypersingular
operator yields that $\tilde{S}$ can be expressed as 
\begin{equation}\label{eq:SL_rep}
\tilde S[\phi](t) =\frac{P(t,t)}{2} + \tilde K [P(\cdot,t)](t) + \tilde S\lf[\phi-Q(\cdot,t)\rg](t),
\end{equation}
for $t\in[0,2\pi]$. Hence, in order for the right-hand-side
of~\cref{eq:SL_rep} to be given in terms of integrals of smooth
functions, we need to find a harmonic function $U$ with Neumann trace $Q$ satisfying 
\begin{align}
  \label{eq:curve-condition-2}
  \lim_{\tau \to t}  \frac{\p^m}{\p\tau^m} \left\{Q(\tau,t) - \phi(\tau) \right\}  = 0 \quad \mbox{for} \quad m=0,\ldots, M.
\end{align}
Again, we do not formally impose any  vanishing conditions on the expressions $P(\tau,t)$ as $\tau\to t$, yet our construction of the harmonic functions $U$ will automatically realize such conditions of order $M+2$. The following lemma shows  that the single-layer operator can in fact be expressed in terms of  smooth $C^{M}$ integrands provided  the interpolation conditions~\cref{eq:curve-condition-2} are satisfied.

\begin{lemma}\label{lem:cond_2}Let $\Gamma\subset\R^2$ be a closed simple analytic curve and $\phi$ be a real $2\pi$-periodic analytic function. Suppose there exists $U_S:\R^2\times\Gamma\to \R$, $U_S(\cdot,\bnex(t))$ harmonic in $\R^2$ for all $t\in[0,2\pi]$, such that its Neumann trace $Q_S(\cdot,t) =\nabla U_S(\cdot,\bnex(t))\cdot \bnor(\cdot):[0,2\pi]\to\R$ satisfies \cref{eq:curve-condition-2} for $M\geq 0$. Then, letting  $P_S(\cdot,t) = U_S(\bnex(\cdot),\bnex(t))$ denote the Dirichlet trace of $U_S$,  the single-layer operator can be expressed 
as
\begin{align}
  \label{eq:single-layer-removed-singularity}
\tilde S[\phi](t) =\frac{P_S(t,t)}{2}+ \int_{0}^{2\pi}\lf\{R^{(P)}_S(\tau,t)+R^{(Q)}_S(\tau,t)\rg\}|\bnex'(\tau)|\de\tau,  
\end{align}
where 
\begin{equation}\label{eq:SP}
 R^{(P)}_S(\tau,t) :=\lf\{\begin{array}{ccc}\displaystyle\frac{1}{2\pi}\frac{\{\bnex(t)-\bnex(\tau)\}\cdot \bnor(\tau)}{ |\bnex(t)-\bnex(\tau)|^2} P_S(\tau,t)&\mbox{if}&\tau\neq t,\medskip\\
 \displaystyle -\frac{\bnex''(t)\cdot \bnor(t)}{4\pi |\bnex'(t)|^2}P_S(t,t) &\mbox{if}&\tau=t,
 \end{array}\rg.
\end{equation}
is a  analytic $2\pi$-periodic  function of $\tau\in[0,2\pi]$, and 
\begin{equation}\label{eq:SQ}
 R^{(Q)}_S(\tau,t) :=	\lf\{\begin{array}{ccc}\displaystyle-\frac{1}{2\pi}\log\lf(|\bnex(t)-\bnex(\tau)|\rg) \lf\{\phi(\tau)-Q_S(\tau,t)\rg\}&\mbox{if}&\tau\neq t,\medskip\\
0 &\mbox{if}&\tau=t,
 \end{array}\rg.    
\end{equation}
is a $M$-times continuously differentiable $2\pi$-periodic function of $\tau\in[0,2\pi]$,  for all $t\in[0,2\pi]$. Furthermore, the $(M+1)$-th derivative of $R_S^{(Q)}(\cdot,t)$ is an integrable function for all $t\in[0,2\pi]$
\end{lemma}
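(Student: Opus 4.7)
The plan is to mirror the strategy used in the proof of \Cref{lem:cond_1} for the hypersingular operator, adapting it to the logarithmic (rather than Cauchy-type) singularity that the single-layer kernel carries. First I would derive the representation~\cref{eq:single-layer-removed-singularity} from identity~\cref{eq:SL_rep} by writing the operators $\tilde K$ and $\tilde S$ explicitly in terms of the Green function and its normal derivative: the smooth-kernel double-layer term $\tilde K[P_S(\cdot,t)](t)$ contributes exactly the expression defining $R^{(P)}_S(\tau,t)$ in~\cref{eq:SP}, while the regularized single-layer term $\tilde S[\phi-Q_S(\cdot,t)](t)$ contributes $R^{(Q)}_S(\tau,t)$ in~\cref{eq:SQ}. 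The on-diagonal values at $\tau=t$ would then be obtained by a direct application of L'Hospital's rule, using the analyticity of the parametrization and the factorization $\{\bnex(t)-\bnex(\tau)\}\cdot\bnor(\tau)=\sin^2(\tfrac{t-\tau}{2})c(\tau,t)$ already recorded in the proof of \Cref{lem:cond_1}.

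For the analyticity of $R^{(P)}_S(\cdot,t)$, I would reuse almost verbatim the argument from \Cref{lem:cond_1}: numerator and denominator of the double-layer kernel share a common $\sin^2(\tfrac{t-\tau}{2})$ factor, yielding a removable singularity at $\tau=t$; and since $U_S(\cdot,\bnex(t))$ is harmonic on all of $\R^2$, its Dirichlet trace $P_S(\cdot,t)$ is analytic and $2\pi$-periodic in the first argument, so the product is analytic and $2\pi$-periodic.

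The heart of the proof is the regularity analysis of $R^{(Q)}_S(\cdot,t)$. The key idea is to split the logarithmic factor via $|\bnex(t)-\bnex(\tau)|^2=\sin^2(\tfrac{t-\tau}{2})\,a(\tau,t)$ with $a$ analytic, $2\pi$-periodic, and $a(t,t)=|\bnex'(t)|^2\neq 0$, giving $\log|\bnex(t)-\bnex(\tau)|=\log|\sin(\tfrac{t-\tau}{2})|+\tfrac{1}{2}\log a(\tau,t)$, where the second summand is analytic. By Taylor's theorem, the interpolation condition~\cref{eq:curve-condition-2} together with the analyticity of $\phi$ and $Q_S(\cdot,t)$ implies $\phi(\tau)-Q_S(\tau,t)=\sin^{M+1}(\tfrac{t-\tau}{2})\,d(\tau,t)$ for some analytic $2\pi$-periodic $d(\cdot,t)$, so $R^{(Q)}_S(\cdot,t)$ is the sum of an analytic $2\pi$-periodic function and $\sin^{M+1}(\tfrac{t-\tau}{2})\log|\sin(\tfrac{t-\tau}{2})|$ times another analytic $2\pi$-periodic function. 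The regularity is therefore dictated by the elementary one-variable model $s\mapsto s^{M+1}\log|s|$, which is $M$-times continuously differentiable at the origin and whose $(M+1)$-th derivative is of the form $(M+1)!\log|s|+\text{bounded}$, hence integrable but not continuous.

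The main obstacle is the careful bookkeeping in this last step: one has to verify that the multiplication by the analytic factor does not spuriously improve or degrade the expected regularity (which amounts to tracking the leading logarithm after $M+1$ differentiations via Leibniz's rule), that the logarithmic singularity truly occurs only at $\tau=t$ inside the period (which relies on $\Gamma$ being simple so that $\bnex(\tau)=\bnex(t)$ forces $\tau\equiv t\pmod{2\pi}$), and that $2\pi$-periodicity in $\tau$ is preserved throughout since all factors appearing in the decomposition are $2\pi$-periodic.
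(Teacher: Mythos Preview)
Your proposal is correct and follows essentially the same route as the paper's proof: derive the representation from~\cref{eq:SL_rep}, treat $R^{(P)}_S$ by the same removable-singularity argument as $R^{(Q)}_N$ in \Cref{lem:cond_1}, and decompose $R^{(Q)}_S$ by writing $\phi(\tau)-Q_S(\tau,t)=\sin^{M+1}(\tfrac{t-\tau}{2})\,d(\tau,t)$ and splitting $\log|\bnex(t)-\bnex(\tau)|$ into the singular part $\log|\sin(\tfrac{t-\tau}{2})|$ plus an analytic remainder, then reading off the regularity from the elementary model $s\mapsto s^{M+1}\log|s|$. The paper phrases the last step slightly differently (adding and subtracting $\sin^{M+1}(\tfrac{t-\tau}{2})d(\tau,t)\log(4\sin^2(\tfrac{t-\tau}{2}))$ and invoking the product rule), but this is the same decomposition and the same conclusion.
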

\begin{proof}
  The explicit expressions for the integrands in~\cref{eq:SP}
  and~\cref{eq:SQ} are obtained directly from
  identity~\cref{eq:SL_rep}. The proof that $R^{(P)}_S(\cdot,t)$ is an
  analytic $2\pi$-periodic function, on the other hand, is essentially
  the same as in the case of $R^{(Q)}_N(\cdot,t)$ in~\Cref{lem:cond_1}, so it is omitted here.
  
  Since both $\phi$ and $Q_S(\cdot,t)$ are  analytic $2\pi$-periodic
  functions we have that $ \phi-Q_S(\cdot,t)$ can be expressed as $
  \phi(\tau)-Q_S(\tau,t)=\sin^{M+1}(\frac{t-\tau}{2})a(\tau,t)$ with
  $a(\cdot,t)$   being  analytic and $2\pi$-periodic. Therefore,
  adding and subtracting the function
  $\frac{1}{4\pi}\sin^{M+1}(\frac{t-\tau}{2})a(\tau,t)\log(4\sin^2(\frac{t-\tau}{2}))$
  to $R^{(Q)}_S(\tau,t)$ in~\cref{eq:SQ} we obtain
  $R^{(Q)}_S(\tau,t) =-\frac{1}{4\pi}
  \sin^{M+1}(\frac{t-\tau}{2})a(\tau,t)\log(4\sin^2(\frac{t-\tau}{2})) +
  b(\tau,t)$ with~~$b(\tau,t)=-\frac{1}{4\pi}
  \sin^{M+1}(\frac{t-\tau}{2})a(\tau,t)\log(|\bnex(\tau)-\bnex(t)|^2/4\sin^2(\frac{t-\tau}{2}))
  $ being  analytic and $2\pi$-periodic. Therefore, by the product
  rule we readily obtain that  the  $M$-th order derivative of
  $R^{(Q)}_S(\cdot,t)$ is a $2\pi$-periodic  function that vanishes as
  $O((t-\tau)\log|t-\tau|)$ as $\tau\to t$, so it is indeed
  continuous at $\tau=t$. The $(M+1)$-th order derivative of
  $R^{(Q)}_S(\cdot,t)$, in turn, features a $O(\log|t-\tau|)$
  integrable singularity as $\tau\to t$. The proof is now complete.
\end{proof}

\begin{remark}\label{rem:analy}
The analytic regularity of both the curve $\Gamma$ and the density function  $\phi$ assumed in this section can be significantly relaxed. Indeed, straightforward modifications of Lemmas~\ref{lem:cond_1} and~\ref{lem:cond_2} can be carried out to show that for  $\Gamma$ of class $\mathcal C^{\tilde \kappa}$ and $\phi$ of class $\mathcal C^{\kappa}$, with $\kappa\leq \tilde\kappa$ and $2\leq M\leq  \kappa$, the proposed density interpolation technique yields $2\pi$-periodic $(\kappa-2)$-times continuously differentiable integrands in the cases of the hypersingular, double-layer, and adjoint double-layer operators, and it yields  $2\pi$-periodic $M$-times continuously differentiable integrands in the case of the single-layer operator. 

 It is also worth mentioning that the proposed technique can in principle be extended to piecewise smooth curves in 2D when the (global) parametrization of the curve and the layer potential densities are not differentiable at corners. The lack of smoothness at corners can be circumvented in practice by discretizing the smooth panels using grids that (i) are refined toward corner points and (ii) do not include corner points (e.g. Chebyshev nodes). As demonstrated in the numerical experiments in Section~\ref{3d_num} below, this approach works well in 3D for low-order interpolations order (see also~\cite{perez2018plane} for a related approach for the 2D Helmholtz equation using graded meshes constructed via sigmoid transforms).
\end{remark}
In the next section we provide a procedure to construct harmonic interpolating functions
$U_N$ and $U_S$ satisfying the conditions
in~\Cref{lem:cond_1,lem:cond_2}, respectively, for
any given  density interpolation order $M\geq 0$. 

\section{Harmonic interpolating functions in 2D}\label{eq:hif}
In order to construct the desired harmonic functions $U_N,U_S:\R^2\times\Gamma\to \R$ in~\Cref{lem:cond_1,lem:cond_2}, we consider linear combinations of harmonic polynomials of the form
\begin{equation}  \label{eq:1}
  U(\nex,\ney) = \sum_{j=0}^J \mathbf{C}_j(\ney) \cdot \mathbf{H}_j(\nex - \ney),\qquad\nex=(x_1,x_2)\in\R^2,\ney=(y_1,y_2)\in\Gamma,
\end{equation}
where   $\bold H_j(\nex-\ney)=[\real (z-w)^j,\imag (z-w)^j]^T$, with $z=x_1+ix_2$, $w=y_1+iy_2,$ and  
  $\bold C_j:\Gamma\to \R^2$, for $j=0,\ldots,J$. Instead of working with the real expression~\eqref{eq:1} it turns out
to be convenient to complexify it and consider instead 
$U(\nex,\ney) = \real\{F(z,w)\}$  where
\begin{equation}  \label{eq:powe_series}
F(z,w) = \sum_{j=0}^J \frac{\tilde c_j(w)}{j!}  (z - w)^j,\qquad z\in\C,w\in\gamma,
\end{equation}
with $\gamma$ denoting the curve in the complex plane parametrized by the (analytic) complex valued function
$\zeta(t) = {\rm x_1}(t)+i{\rm x_2(t)}$, $t\in[0,2\pi].$ We here recall that the curve  $\Gamma\subset\R^2$ is parametrized by $\bold x(t)=({\rm x}_1(t),{\rm x}_2(t))$, $t\in[0,2\pi]$. Clearly, (by the Cauchy-Riemann equations) $U(\nex,\ney) = \real\{F(z,w)\}$ is harmonic in $\nex\in\R^2$.
  
  Letting then $w=\zeta(t)$, $z=\zeta(\tau)$, and  calling $f(\tau,t)=F(\zeta(\tau),\zeta(t))$ for $t,\tau\in [0,2\pi]$ in~\eqref{eq:powe_series},  we obtain the expressions
\begin{subequations} \begin{align}
 P(\tau,t) &= \real\{f(\tau,t)\}\label{eq:dir_trace_complex}, \\
 Q(\tau,t) &=\frac{1}{|\zeta'(\tau)|}\imag\lf\{\frac{\p }{\p\tau}f(\tau,t)\rg\},\label{eq:neu_trace_complex}
\end{align}\end{subequations}
for the Dirichlet and Neumann traces of $U$  (in the parameter space), respectively, where we made use of the identity  $\bold n(t) =(\imag\zeta'(t),-\real\zeta'(t))/|\zeta'(t)|$. Furthermore, it is easy to show that 
\begin{subequations}\begin{align}
\frac{\p^m}{\p \tau^m}P(\tau,t) &=   \real\lf\{\frac{\p^m}{\de
  \tau^m}f(\tau,t)\rg\},\\
\frac{\p^m}{\p \tau^m}Q(\tau,t) &=  \imag\lf\{\frac{\p^{m}}{\p \tau^{m}}\lf(\frac{1}{|\zeta'(\tau)|}\frac{\p f}{\p\tau}(\tau,t)\rg)\rg\},
\end{align}\label{eq:n_ders}\end{subequations}
for $m\geq1$, where by the Fa\`a di Bruno formula we have
\begin{equation}\begin{split}
  \frac{\p^m}{\p \tau^m}f(\tau,t)\Big|_{\tau=t} 
  =  \sum_{j=1}^m c_{j}(t) 
\mathbb  B_{m,j}\left(\zeta'(t), \zeta''(t), \dots, \zeta^{(m-j+1)} (t)\right),\label{eq:ders}
\end{split}\end{equation}
with  $c_j(t) = \tilde c_j(\zeta(t))$ and $\mathbb B_{m,j}$, $1\leq m,j\leq J$, denoting the so-called partial or incomplete Bell polynomials;~cf.~\cite[Chapter 13]{aldrovandi2001special}.

In what follows of this section we use the identities~\eqref{eq:n_ders} and~\eqref{eq:ders} to show that given a density interpolation order $M\geq 0$, there exist  harmonic functions $U_N(\nex,\ney)=\real \{F_N(z,w)\}$ and $U_S(\nex,\ney)=\real \{F_S(z,w)\}$ satisfying the interpolating conditions in~\cref{eq:curve-condition-1} and~\cref{eq:curve-condition-2}, where $F_N$ and $F_S$ are functions of the form~\cref{eq:powe_series}. 

\subsection{Interpolating function for the hypersingular operator}
We start by seeking coefficients $c_j^{(N)}$ in the expansion
\begin{equation}\label{eq:f_N}
f_N(\tau,t) =F_N(\zeta(\tau),\zeta(t)) =\sum_{j=0}^J\frac{c^{(N)}_j(t)}{j!}(\zeta(\tau)-\zeta(t))^j,\quad \tau,t\in[0,2\pi],
\end{equation}
so that the interpolation conditions~\cref{eq:curve-condition-1} are satisfied.  
In view of equations~\cref{eq:n_ders}, we then note that  conditions~\cref{eq:curve-condition-1} can be enforced on $P_N(\tau,t)=\real\{f_N(\tau,t)\}$ by requiring $f_N$ to satisfy 
\begin{equation}\label{eq:complex_point_cond_1}
\lim_{\tau \to t}  \frac{\p^m}{\p \tau^m}\left[f_N(\tau,t) - \phi(\tau) \right]= 0 \quad\mbox{for}\quad m=0,\ldots,M.
\end{equation}
Clearly, the $m=0$ condition in~\cref{eq:complex_point_cond_1} implies that $c^{(N)}_0(t)=\phi(t)$. Taking then $J=M$ in~\cref{eq:f_N}  we  get from~\cref{eq:ders} that the remaining $M$ conditions (for $m=1,\ldots,M$) can be expressed as the  linear system 
\begin{equation}\label{eq:lin_sym_2d}
 B(t) \mathbf{c}^{(N)}(t)=\bol{\phi}^{(N)}(t),
\end{equation}
whose unknowns are the coefficients $\bold c^{(N)}(t) = [c^{(N)}_1(t),\dots,c^{(N)}_J(t)]^T$, where  the entries of the $J\times J$ matrix function $B$  are given by
\begin{align}
  \label{eq:15}
  B_{m,j}(t) &:= 
  \begin{cases}
    0, \quad m<j, \medskip\\
\displaystyle    \mathbb B_{m,j}\left(\zeta'(t), \dots, \zeta^{(m-j+1)}(t) \right), \quad m \geq j,
\end{cases}
\end{align}
and where the vector on the right-hand-side of equation~\eqref{eq:lin_sym_2d} is given explicitly by $\bol\phi^{(N)}(t) =\lf[\frac{\de}{\de t} \phi(t),\cdots,\frac{\de^{J}}{\de t^J} \phi(t)\rg]^T$. The matrix $B$ defined in~\cref{eq:15} is sometimes referred to as Bell matrix;~cf.~\cite[Chapter 13]{aldrovandi2001special}.

Since $B$ is a lower triangular matrix and its diagonal terms are  $B_{j,j} =
(\zeta')^j$~\cite[Chapter 13]{aldrovandi2001special}, where $\zeta'(t)\neq 0$ for all $t\in[0,2\pi]$
(i.e., the curve is regular), we
readily conclude that the linear system~\cref{eq:lin_sym_2d} is
invertible for all $t\in[0,2\pi]$. Therefore, having retrieved the
coefficients $c^{(N)}_j(t)$ from solving the linear system~\cref{eq:lin_sym_2d}, we immediately obtain $P_N(\tau,t) = \real\left\{f_N(\tau,t)\right\}$ and $Q_N(\tau,t)=|\zeta(\tau)|^{-1}\imag
\left\{\frac{\p}{\p \tau}f_N(\tau,t) \right\}$.

Given that the matrix $B$ is lower triangular,  the solution $\bold  c^{(N)}(t)$ of the linear system~\cref{eq:lin_sym_2d} can be obtained using forward substitution. For the  density interpolation order $M=5$, for example, the matrix $B$ is explicitly given by 
\begin{align}
B=\lf[\begin{array}{ccccc}
\zeta'&0&0&0&0\medskip\\
\zeta''&(\zeta')^2&0&0&0\medskip\\
\zeta'''&3\zeta'\zeta''&(\zeta')^3&0&0\\
\zeta^{(4)}&3(\zeta'')^2+4\zeta'\zeta'''&6(\zeta')^2\zeta''&(\zeta')^4&0\\
\zeta^{(5)}&10\zeta''\zeta'''+5\zeta'\zeta^{(4)}&15\zeta'(\zeta'')^2+10(\zeta')^2\zeta'''&10(\zeta')^3\zeta''&(\zeta')^5
\end{array}\rg].  \label{eq:B_explit}
\end{align}
Matrices $B$ corresponding to orders $M<5$ are simply submatrices of the matrix displayed in~\cref{eq:B_explit}.

\begin{remark} \label{rem:N}It follows from~\cref{eq:complex_point_cond_1}, and the fact that $\phi$ is real-valued,  that $$\lim_{\tau\to t}\imag \lf\{\frac{\p^{m+1}}{\p \tau^{m+1}}f_N(\tau,t)\rg\} = 0\quad\mbox{for}\quad m=0,\ldots,M-1.$$ Therefore, $Q_N$, defined as in~\cref{eq:neu_trace_complex} in terms of $f_N$, satisfies
$$
\lim_{\tau\to t}\frac{\p^m}{\p \tau^m}Q_N(\tau,t) =\lim_{\tau\to t} \imag\lf\{\frac{\p^{m}}{\p \tau^{m}}\lf(\frac{1}{|\zeta'(\tau)|}\frac{\p f_N}{\p\tau}(\tau,t)\rg)\rg\}=0\ \ \mbox{for}\ \ m=0,\ldots,M-1,
$$
 and, consequently, $Q_N(\tau,t) = O((\tau-t)^{M})$ as $\tau\to t$. This fact will play an important role in~\cref{sec:eval-layer-potent}, where an  extension of the proposed   density interpolation technique is presented for the regularization of nearly singular integrals. 
\end{remark}

\subsection{Interpolating function for the single-layer operator}
A procedure similar to the one described above for the construction of $f_N$ allows us to find the coefficients $c^{(S)}_j$ in the expansion
\begin{equation}\label{eq:f_S}
f_S(\tau,t) =F_S(\zeta(\tau),\zeta(t)) =\sum_{j=0}^J\frac{c^{(S)}_j(t)}{j!}(\zeta(\tau)-\zeta(t))^j,\quad \tau,t\in[0,2\pi].
\end{equation}
In fact, in view of  identities~\cref{eq:n_ders},   conditions~\cref{eq:curve-condition-2} can be enforced on $Q_S(\tau,t)=|\zeta(\tau)|^{-1}\imag \frac{\p}{\p \tau} f_S(\tau,t)$ by requiring $f_S$ to satisfy 
\begin{equation}\label{eq:complex_point_cond_2}
\lim_{\tau \to t}  \frac{\p^m}{\p \tau^m}\left[\frac{1}{|\zeta'(\tau)|}\frac{\p}{\p\tau}f_S(\tau,t) -i \phi(\tau) \right]= 0 \quad\mbox{for}\quad m=0,\ldots,M.
\end{equation}
Conditions~\cref{eq:complex_point_cond_2} do not pose any constrain on
$c_0^{(S)}$, so we may set  $c_0^{(S)}(t)=0$.
Letting then $J=M+1$ in~\cref{eq:f_S}  we obtain that conditions~\cref{eq:complex_point_cond_2} 
are fulfilled if and only if  $\bold c^{(S)}(t) = [c^{(S)}_1(t),\dots,c^{(S)}_{J}(t)]^T$
satisfies
\begin{equation}\label{eq:lin_sym_S}
A(t)B(t) \mathbf{c}^{(S)}(t)=\bol\phi^{(S)}(t),
\end{equation}
where $A$ is the $J\times J$  lower-triangular matrix
\begin{align}
  \label{eq:bin}
  A_{m,j}(t) &:= 
  \begin{cases}
    0, \quad m<j, \medskip\\
\displaystyle   \binom{m-1}{j-1}\frac{\de^{m-j}}{\de t^{m-j}}\lf(\frac{1}{|\zeta'(t)|}\rg), \quad m \geq j,
\end{cases}
\end{align}
  $B$ is the Bell matrix~\cref{eq:15}, and  $\bol\phi^{(S)}(t) =\lf[\phi(t),\frac{\de}{\de t} \phi(t),\cdots,\frac{\de^{J-1}}{\de t^{J-1}} \phi(t)\rg]^T$.  Since both $A$ and $B$ are invertible matrices (none of the diagonal entries of  $A$ is zero), the coefficients $c^{(S)}_j(t)$ in the expansion~\cref{eq:f_S} are uniquely determined  by the linear system~\cref{eq:lin_sym_S}. Having found the coefficients $c^{(S)}_j(t)$, we obtain $P_S(\tau,t) = \real\{f_S(\tau,t)\}$ and $Q_S(\tau,t)=|\zeta'(\tau)|^{-1}\imag\{\frac{\p}{\p \tau}f_S(\tau,t)\}$.
\begin{remark}\label{rem:S} Note that~\cref{eq:complex_point_cond_2}, the choice $c_0^{(S)}=0$, and the fact that $\phi$ is  real-valued, imply that that $P_S$ satisfies
$$
\lim_{\tau\to t}\frac{\p^m}{\p \tau^m}P_S(\tau,t) = \lim_{\tau\to t}\real \lf\{\frac{\p^{m}}{\p \tau^{m}}f_S(\tau,t)\rg\} = 0,
$$
for $m=0,\ldots,M+1$. Therefore $P_S(\tau,t) =
O\lf((\tau-t)^{M+2}\rg)$ as $\tau\to t$. This identity, together with \Cref{rem:N}, will be used in \cref{sec:eval-layer-potent} to extend the proposed  density interpolation technique to the regularization of nearly singular integrals.
\end{remark}
\begin{remark}\label{rem:arclength-parametrization} If an arc-length
  parametrization of the curve $\Gamma$ is considered, i.e.,
  $\Gamma = \{ \mathbf{x}(s) = ({\rm x}_1(s),{\rm x}_2(s)): s \in [0,L) \}$ where
  $L$ is the length of the curve and
  $|\mathbf{x'}(s)| = 1$ for all  $s \in [0,L)$, then the matrix  $A$ in~\cref{eq:lin_sym_S} becomes the
  identity matrix and the linear systems \cref{eq:lin_sym} and
  \cref{eq:lin_sym_S} differ only by the right hand side. Although in
  theory it is always possible to re-parametrize a smooth regular curve by
  its arc-length, there are practical advantages of considering the
  more general parametrization~\cref{eq:param}  assumed here.
\end{remark}

As announced, in the next section we present and extension of the harmonic interpolation technique to the regularization of nearly singular integrals. 
\section{Nearly singular integrals in 2D} \label{sec:eval-layer-potent}
The single- and double-layer
potentials are respectively defined as
\begin{align}
\mathcal S[\varphi](\nex) := \int_{\Gamma}G(\nex,\ney)\varphi(\ney)\de s(\ney)\andtext
\mathcal D[\varphi](\nex) := \int_{\Gamma}\frac{\p G(\nex,\ney)}{\p n(\ney)}\varphi(\ney)\de s(\ney),
\label{eq:potentials_gen}\end{align}
for $\nex\in\R^2\setminus\Gamma$, which, using the curve parametrization in 2D, become
\begin{subequations}\begin{align}
\mathcal S[\varphi](\nex) :=& -\frac{1}{2\pi} \int_{0}^{2\pi} \log\lf(|\nex-\bnex(\tau)|\rg)\phi(\tau)|\bnex'(\tau)|\de \tau,\\
\mathcal D[\varphi](\nex) :=&~  \frac{1}{2\pi} \int_{0}^{2\pi} \frac{(\nex-\bnex(\tau))\cdot \bnor(\tau)}{|\nex-\bnex(\tau)|^2}\phi(\tau)|\bnex'(\tau)|\de \tau,
\end{align}\label{eq:potentials}\end{subequations} where
$\phi(\tau)=\varphi(\mathbf{x}(\tau))$. As it is well known, the kernels in \cref{eq:potentials} become
nearly singular (i.e., develop sharp peaks) as the target point $\nex$ approaches the boundary
$\Gamma$, making the numerical evaluation of
\cref{eq:potentials} challenging. In this section we show that the
harmonic interpolating functions $U_N$ and $U_S$ constructed
in~\cref{eq:hif} can be effectively used to effectively regularize the kernels of
both layer potentials.


We first note that by Green's third identity, any function $U:\R^2\times\Gamma\to\R$  harmonic in the first variable, satisfies 
\begin{equation}-\mu(\nex) U(\nex,\nex_0)=\mathcal D[U(\cdot,\nex_0)](\nex)-\mathcal S[\p_n U(\cdot,\nex_0)](\nex),\quad \nex\notin\Gamma,
\label{eq:GHarmonic}\end{equation} where $\mu(\nex)=1$ if  $\nex$ lies inside the domain enclosed by $\Gamma$, and $\mu(\nex)=0$ otherwise. Using this identity and recalling that $P$ and $Q$ denote the Dirichlet and Neumann boundary values of $U$ on $\Gamma$, we have that  the single-layer potential  can be expressed as 
\begin{equation}\label{eq:single_layer_pot}\begin{split}
&\mathcal S[\varphi](\nex) =\mu(\nex) U(\nex,\nex_0) \\
& +\int_{0}^{2\pi}\!\! \lf\{\frac{(\nex-\bnex(\tau))\cdot \bnor(\tau)}{2\pi|\nex-\bnex(\tau)|^2}P(\tau,t_0)- \frac{\log\lf(|\nex-\bnex(\tau)|\rg)}{2\pi}(\phi(\tau)-Q(\tau,t_0))\rg\}|\bnex'(\tau)|\de \tau,
\end{split}\end{equation}
while the double-layer potential can be expressed as
\begin{equation}\label{eq:double_layer_pot}
\begin{split}
&\mathcal D[\varphi](\nex) =\mu(\nex) U(\nex,\nex_0) \\
& +\int_{0}^{2\pi}\!\! \lf\{\frac{(\nex-\bnex(\tau))\cdot \bnor(\tau)}{2\pi|\nex-\bnex(\tau)|^2}(\phi(\tau)-P(\tau,t_0))- \frac{\log\lf(|\nex-\bnex(\tau)|\rg)}{2\pi}Q(\tau,t_0)\rg\}|\bnex'(\tau)|\de \tau,
\end{split}\end{equation}
for  all $\nex\notin\Gamma$ and $\nex_0=\bnex(t_0)\in\Gamma$. 

In order to smooth out the nearly singular integrands
in~\cref{eq:single_layer_pot} and \cref{eq:double_layer_pot} that
arise for points $\nex$ close to $\Gamma$, we let $U=U_S$
in~\cref{eq:single_layer_pot} and $U= U_N$
in~\cref{eq:double_layer_pot}, where $U_S$ and $U_N$ are the harmonic
interpolating functions constructed in~\cref{eq:hif}. The expansion
point $\nex_0$ is then selected as $\nex_0=\bnex(t_0)$ with $t_0=
{\rm arg}\min_{t\in[0,2\pi)}|\nex-\bnex(t)|$, where we assume  that $\nex$ is
sufficiently close to $\Gamma$ so that this minimizer is unique. Doing so, and in
view of the fact that  $\phi(\tau)-Q_S(\tau,t) = O((\tau-t)^{M+1})$ and
$P_S(\tau,t) = O\lf((\tau-t)^{M+2}\rg)$ as $\tau\to t$ (see~\Cref{rem:S}), we  obtain  that the integrands
in~\cref{eq:single_layer_pot} satisfy
\begin{subequations}\label{eq:asymp_1}\begin{align}
\frac{(\nex-\bnex(\tau))\cdot \bnor(\tau)}{|\nex-\bnex(\tau)|^2}P_S(\tau,t) &= O\lf(\frac{(\tau-t)^{M+2}}{|\nex-\bnex(\tau)|}\rg),\\
\log\lf(|\nex-\bnex(\tau)|\rg)(\phi(\tau)-Q_S(\tau,t_0)) &= O\lf(\log\lf(|\nex-\bnex(\tau)|\rg)(\tau-t)^{M+1}\rg),
\end{align}\end{subequations}
as $\tau\to t$.  Similarly, recalling that 
$\phi(\tau) - P_N(\tau,t) = O\lf((\tau-t)^{M+1}\rg)$ and $Q_N(\tau,t) = O((\tau-t)^{M})$ as $\tau\to t$
(see~\Cref{rem:N}), it can be shown the integrands in~\cref{eq:double_layer_pot} satisfy
\begin{subequations}\label{eq:asymp_2}\begin{align}
\frac{(\nex-\bnex(\tau))\cdot \bnor(\tau)}{|\nex-\bnex(\tau)|^2}(\phi(\tau)-P_N(\tau,t_0)) &= O\lf(\frac{(\tau-t)^{M+1}}{|\nex-\bnex(\tau)|}\rg),\\
\log\lf(|\nex-\bnex(\tau)|\rg)Q_N &= O\lf(\log\lf(|\nex-\bnex(\tau)|\rg)(\tau-t)^{M}\rg),
\end{align}\end{subequations}
as $\tau\to t$, where $M$ is the prescribed harmonic interpolation order. The asymptotic identities~\cref{eq:asymp_1} and~\cref{eq:asymp_2} show that the smoothness of the integrands at and around the nearly singular point $\nex_0\in\Gamma$ can be controlled by means of our technique.

As shown in~\cite[Section 3.4]{perez2018plane}, the proposed
technique can also be utilized to regularize the kernels that arise in
problems involving multiply connected domains with boundary components
that are close to each other. In this case, nearly singular integrals
associated with any of the integral operators~\cref{eq:int_op} can be
directly computed by evaluating the regularized expressions for the
layer potentials~\cref{eq:single_layer_pot}
and~\cref{eq:double_layer_pot}, or their respective normal derivatives
on a curve $\Gamma'$ that does not intersect $\Gamma$. The
numerical results presented in this paper demonstrate the
effectiveness of the proposed technique for the regularization of
nearly singular integrals.

\section{High-order kernel singularity regularization  for 3D problems}\label{sec:3D}
As it turns out, most of the ideas presented above in this paper for the Laplace boundary integral operators in 2D can be directly generalized to 3D. Indeed, letting $U_S,U_N:\R^3\times\Gamma\to\R$ denote  harmonic functions (in the first variable) in all of $\R^3$  it can be shown---using Green's third identity---that for a smooth closed oriented surface $\Gamma$ the four integral operators of Calder\'on calculus can be expressed~as
\begin{subequations}\begin{align}
S[\varphi](\nex) &=\frac{U_S(\nex,\nex)}{2} +  K [U_S(\cdot,\nex)])(\nex) +  S\lf[\varphi-\p_n U_S(\cdot,\nex)\rg](\nex),\label{eq:3d_SL}\\
K[\varphi](\nex) &=-\frac{U_N(\nex,\nex)}{2} +  K [\varphi-U_N(\cdot,\nex)](\nex) +  S\lf[\p_n U_N(\cdot,\nex)\rg](\nex),\label{eq:3d_DL}\\
K'[\varphi](\nex) &=\frac{\p_n U_S(\nex,\nex)}{2} +  N [U_S(\cdot,\nex)](\nex) +  K'\lf[\varphi-\p_n U_S(\cdot,\nex)\rg](\nex),\label{eq:3d_DS}\\
N[\varphi](\nex) &=-\frac{\p_n U_N(\nex,\nex)}{2} +  N [\varphi-U_N(\cdot,\nex)])(\nex) +  K'\lf[\p_n U_N(\cdot,\nex)\rg](\nex),\label{eq:3d_HS}
\end{align}\label{eq:IE_3d}\end{subequations}
for $\nex\in\Gamma\subset\R^3$, where the definition of the boundary integral operators $S$, $K$, $K'$ and $N$ is the same as the one given in equation~\eqref{eq:int_op}, except for  the fundamental solution of the Laplace equation whose expression in 3D is
\begin{equation}
G(\nex,\ney):=\frac{1}{4\pi|\nex-\ney|}. \label{eq:GF_3D}\end{equation}
Note that unlike the 2D case, the kernels of the operators $K$ and $K'$ are (weakly) singular, and thus all the four Laplace boundary integral operators in 3D require application of the  density interpolation technique to achieve integral expressions in terms of more regular integrands.

In order to establish a set of point conditions on the Dirichlet and Neumann boundary values of $U_S$ and $U_N$ on $\Gamma$ that lead to more regular integrands in the expressions on the right-hand-side of~\eqref{eq:IE_3d}, we resort to the local parametrization of the surface $\Gamma$.  It follows from the regularity of $\Gamma$ that at every point $\nex\in\Gamma$ there exists a smooth local diffeomorphism $\bnex:B_{\bxi}\to\Gamma\cap B_{\nex}$, where $B_{\bxi}\subset\R^2$ is an open neighborhood of $\bxi$, $B_{\nex}$ in an open neighborhood of $\nex\in\Gamma$, and the mapping $\bnex$ is such that $\bnex(\bxi)=\nex$.  Resorting to the parametrization $\bnex$ and the index notation for partial derivatives, i.e., letting $\alpha=(\alpha_1,\alpha_2)$, $|\alpha|=\alpha_1+\alpha_2$, and $\p^\alpha = \p_1^{\alpha_1}\p_2^{\alpha_2}$ where $\p_j$  denotes the derivative with respect to $\xi_j$ ($\bxi=(\xi_1,\xi_2)$), it can be shown that the point conditions  
\begin{subequations} \label{eq:curve-condition-2_3D}\begin{align} 
  \lim_{\bxi' \to \bxi}  \p^{\alpha} P_S\lf(\bxi',\bxi\rg)   =&~0 \quad \mbox{for} \quad |\alpha|=0,\ldots, M_P,\\
  \lim_{\bxi' \to \bxi}  \p^{\alpha} \left\{\phi\lf(\bxi'\rg)-Q_S\lf(\bxi',\bxi\rg)  \right\}  =&~0 \quad \mbox{for} \quad |\alpha|=0,\ldots, M_Q,
\end{align}\end{subequations}
and 
\begin{subequations}\label{eq:curve-condition-1_3D}\begin{align}  
    \lim_{\bxi' \to \bxi}  \p^{\alpha}   \left\{\phi\lf(\bxi'\rg)-P_N\lf(\bxi',\bxi\rg)  \right\}  = 0&\qquad \mbox{for} \quad |\alpha|=0,\ldots, M_P,\\
  \lim_{\bxi' \to \bxi}  \p^{\alpha} Q_N\lf(\bxi',\bxi\rg)   = 0&\qquad \mbox{for} \quad |\alpha|=0,\ldots, M_Q,
\end{align}\end{subequations}
where $\phi(\bxi) = \varphi(\bnex(\bxi)),$ $P_{S,N}(\bxi',\bxi) = U_{S,N}(\bnex(\bxi'),\bnex(\bxi))$ and $Q_{S,N}(\bxi',\bxi) = \p_n U_{S,N}(\bnex(\bxi'),\bnex(\bxi))$, suffice to guarantee that  the estimates
$$U_S(\ney,\nex)=O(|\nex-\ney|^{M_P+1}), \quad\varphi(\ney)-\p_n U_S(\ney,\nex)=O(|\nex-\ney|^{M_Q+1}),$$  $$\p_nU_N(\ney,\nex)=O(|\nex-\ney|^{M_Q+1}) \andtext  \varphi(\ney)-U_N(\ney,\nex)=O(|\nex-\ney|^{M_P+1})$$ hold true as $\ney\to\nex$, where all the constants in the ``big-$O$" notations above can be bounded uniformly in $\nex$. Therefore, the smoothness of integrands in~\eqref{eq:IE_3d} is controlled by the regularization orders $M_P$ and $M_Q$, provided the requirements~\eqref{eq:curve-condition-2_3D} and~\eqref{eq:curve-condition-1_3D} are satisfied. We emphasize that unlike in the 2D case, there are more local vanishing conditions to be imposed in 3D per each point $\nex$ in order to achieve the same integrand regularization orders.

In the numerical examples considered in this paper we select $M_P = 2$ and $M_Q=1$ (i.e. the lowest values of these parameters that remove the singularity of the hyper-singular BIO and at the same time lead to $C^1$ integrands for the evaluation of both single and double-layer BIOs) and harmonic function functions $U_S$ and $U_N$ given in terms of linear combinations of homogeneous harmonic polynomials of order at most two. In detail we let
\begin{align}
  \label{eq:4}
  U_{S}(\ner,\nex) = \sum_{j=0}^8c^{(S)}_{j}(\nex) H_j(\ner-\nex)\ \ \mbox{and}\ \ \  U_{N}(\ner,\nex) = \sum_{j=0}^8c^{(N)}_{j}(\nex)H_j(\ner-\nex)\ \ (\ner\in\R^3,\ \nex\in\Gamma),\end{align}
where the homogeneous harmonic polynomials utilized in the expansions above are 
\begin{equation}\label{eq:Har_Pol}\begin{split}
H_0(\ner) = 1,\quad  H_1(\ner) = x,\quad H_2(\ner)=y,\quad  H_3(\ner)=z,\quad H_4(\ner)=xy,\quad H_5(\ner)= xz,\\
 H_6(\ner)=yz,\quad H_7(\ner)=x^2-y^2,\andtext H_8(\ner)=x^2-z^2,\qquad \lf(\ner = (x,y,z)\rg),
\end{split}\end{equation}
and the expansion coefficients $c_j^{(S)}$ and $c_j^{(N)}$ must be obtained by enforcing the point conditions~\eqref{eq:curve-condition-2_3D} and~\eqref{eq:curve-condition-1_3D} on $U_S$ and $U_N$. In order to enforce such conditions,  we first let  $h_j(\bxi',\bxi)=H_j(\bnex(\bxi')-\bnex(\bxi))$ and $h_{n,j}(\bxi',\bxi) = \nabla H_j(\bnex(\bxi')-\bnex(\bxi))\cdot \bnor(\bxi')$ denote the Dirichlet and Neumann traces of the harmonic polynomials~$H_j$, respectively,  where the unit normal at $\nex=\bnex(\bxi)\in\Gamma$ is given by $
\bnor(\bxi) = (\p_1\bnex(\bxi)\wedge  \p_2\bnex(\bxi))/|\p_1\bnex(\bxi)\wedge  \p_2\bnex(\bxi)|.$
Therefore, the enforcement of the points conditions~\eqref{eq:curve-condition-2_3D} and~\eqref{eq:curve-condition-1_3D}, respectively, leads  to the following  linear systems
\begin{equation}
A(\bxi)\bold c^{(S)}(\bnex(\bxi)) = \boldsymbol \phi^{(S)}(\bxi)\andtext A(\bxi)\bold c^{(N)}(\bnex(\bxi)) = \boldsymbol\phi^{(N)}(\bxi),\label{eq:lin_sym}
\end{equation}
for the vectors of  coefficients $\bold c^{(S)} = [c_0^{(S)},\ldots,c_8^{(S)}]^T$ and $\bold c^{(N)} = [c_0^{(N)},\ldots,c_8^{(N)}]^T$, where the entries of the matrix $A=(a_{i,j})$ are given by 
\begin{subequations}\begin{align}
&a_{1,j}(\bxi) = h_{j-1}(\bxi,\bxi),&& a_{2,j}(\bxi) = \p_{1}h_{j-1}(\bxi,\bxi),&& a_{3,j}(\bxi) = \p_{2}h_{j-1}(\bxi,\bxi),\\
&a_{4,j} (\bxi)= h_{n,j-1}(\bxi,\bxi),&& a_{5,j}(\bxi) = \p_1h_{n,j-1}(\bxi,\bxi),&&  a_{6,j}(\bxi) = \p_2h_{n,j-1}(\bxi,\bxi),\\
&a_{7,j}(\bxi) = \p^2_{1}h_{j-1}(\bxi,\bxi),&& a_{8,j}(\bxi) = \p_{1}\p_2h_{j-1}(\bxi,\bxi),&& a_{9,j}(\bxi) = \p^2_2h_{j-1}(\bxi,\bxi),
\end{align}\label{eq:mat_entries}\end{subequations}
for $j=1,\ldots,9$, and the right-hand-side vectors are 
\begin{equation}\begin{split} \boldsymbol\phi^{(S)}(\bxi) =&~\lf[0,0,0,\phi(\bxi),\p_1\phi(\bxi), \p_2\phi(\bxi),0,0,0\rg]^T,\\
 \boldsymbol\phi^{(N)}(\bxi) =&~\lf[\phi(\bxi),\p_1\phi(\bxi),\p_2\phi(\bxi), 0,0,0,\p^2_1\phi(\bxi),\p_1\p_2\phi(\bxi),\p^2_2\phi(\bxi)\rg]^T.
\end{split}\label{eq:rhs_vecs}\end{equation}
As discussed in Appendix~\ref{app:inv_mat}, the matrix $A(\bxi)\in\R^{9\times 9}$ is invertible for all the points on the surface and in fact, $\operatorname{det}(A(\bxi)) = -4|\p_1\bnex(\bxi)\wedge\p_2\bnex(\bxi)|^5\neq 0$ (we note here that $|\p_1\bnex(\bxi)\wedge\p_2\bnex(\bxi)|$ represents the surface element, and thus the determinant of the 3D harmonic Taylor-like interpolation problem bears similarities to its 2D counterpart). Clearly, with the aforementioned selections the integrands on the right-hand-side of~\eqref{eq:3d_SL} and~\eqref{eq:3d_DL} become $C(\Gamma)$-functions, while the integrands in~\eqref{eq:3d_DS} and~\eqref{eq:3d_HS} become  bounded functions. Higher-order versions of the harmonic interpolation technique (corresponding to larger values of the parameters $M_P$ and $M_Q$ in equations~\eqref{eq:curve-condition-2_3D} and~\eqref{eq:curve-condition-1_3D}) can be pursued at the cost of incorporating higher-order harmonic polynomials in the Taylor-like interpolation scheme as well as higher-order derivatives of the surface parametrization and of the density $\phi$. However, the invertibility of matrices corresponding to the ensuing interpolation problems~\eqref{eq:lin_sym} for higher values of the parameters $M_P$ and $M_Q$ remains an open question (note that for the next  density interpolation order, that is $M_P=3$ and $M_Q=2$, one would have to deal with $16\times 16$ matrices $A(\bxi)$). Nevertheless, the numerical results presented in Section~\ref{3d_num} illustrate that the choice $M_P=2$ and $M_Q=1$ already leads to very accurate results produced by a simple implementation.

As in 2D, nearly singular integrands arising due to observation points $\nex\not\in\Gamma$ near the surface, can be regularized utilizing the harmonic functions $U_S$ and $U_N$ to interpolate the density function $\varphi$ at the surface point $\nex_0={\rm arg}\min_{\ney\in\Gamma}|\nex-\ney|\in\Gamma$. In particular, it follows from~\eqref{eq:potentials_gen} and~\eqref{eq:GHarmonic} that following expressions for the single- and double-layer potentials
\begin{subequations}\begin{align}\begin{split}
\mathcal S[\varphi](\nex) =&~\mu(\nex) U_S(\nex,\nex_0) \\
&+\int_{\Gamma} \lf\{\frac{\p G(\nex,\ney)}{\p n(\ney)}U_S(\ney,\nex_0)+ G(\nex,\ney)(\varphi(\ney)-\p_nU_S(\ney,\nex_0))\rg\}\de s(\ney), \end{split}\label{eq:single_layer_pot_3D}\\
\begin{split}\mathcal D[\varphi](\nex) =&~-\mu(\nex) U_N(\nex,\nex_0) \\
&+ \int_{\Gamma}\lf\{\frac{\p G(\nex,\ney)}{\p n(\ney)}(\varphi(\ney)-U_N(\ney,\nex_0))+G(\nex,\ney)\p_n U_N(\ney,\nex_0)\rg\}\de s(\ney),\end{split}\label{eq:double_layer_pot_3D}
\end{align}\label{eq:lay_pots}\end{subequations}
hold for all $\nex\notin\Gamma\subset\R^3$, where $\mu(\nex)=1$ if  $\nex$ lies inside the domain enclosed by $\Gamma$, and $\mu(\nex)=0$ otherwise.

We present next a variety of numerical results that showcase the effectiveness of the regularization technique via harmonic density interpolation in both two and three dimensions.

 \section{Numerical examples and applications}\label{sec:numerics}
 \subsection{Singular integrals in 2D}
We first present a simple Nystr\"om method based on the high-order harmonic density interpolation technique (referred to in what follows by the acronym HDI) for kernel regularization combined with the classical
trapezoidal rule for the direct numerical evaluation of  the
single-layer and hypersingular operators. Given that we assumed throughout that the smooth closed curves $\Gamma$ are given in terms of smooth, $2\pi$ periodic parametrizations, we consider a uniform discretization of the interval $[0,2\pi]$ with grid points $t_j = h j$, $h=\pi/N$ for $j=0,\ldots,2N-1$, where $N>0$. Using global trigonometric polynomial interpolation of the densities $\phi$, application of the trapezoidal rule leads to the following semi-discrete approximations of the parametrized single-layer and hyper-singular operators
\begin{align}
\tilde N[\phi](t) &\approx-\frac{Q_N(t,t)}{2}+h\sum_{j=0}^{2N-1} \lf\{R^{(P)}_N(t_j,t)+R^{(Q)}_N(t_j,t)\rg\}|\bnex'(t_j)|,\label{eq:Ntrap}\\
  \tilde S[\phi](t) &\approx\frac{P_S(t,t)}{2}+h\sum_{j=0}^{2N-1}\lf\{R^{(P)}_S(t_j,t)+R^{(Q)}_S(t_j,t)\rg\}|\bnex'(t_j)|,\label{eq:Strap}
\end{align}
for $t\in[0,2\pi]$, where~$R^{(P)}_N$, $R^{(Q)}_N$, $R^{(P)}_S$ and $R^{(Q)}_S$ are defined in~\cref{eq:NP}, \cref{eq:NQ}, \cref{eq:SP} and \cref{eq:SQ}, respectively. The construction of all these functions is described in~\cref{eq:hif} and requires computation of high-order derivatives of  the parametrization $\bnex(\tau)$ as well as the trigonometric polynomial interpolants of $\phi(\tau)$ at the grid points. Since the functions $\phi(\tau)$ are  $2\pi$-periodic and analytic, the derivatives of their trigonometric polynomial interpolants can be computed via FFT-based numerical differentiation with errors that decay exponentially fast as  the size $N$ of the equi-spaced grid increases; cf.~\cite{tadmor1986exponential}.   In light of~\Cref{lem:cond_1}, which established the smoothness and periodicity of $R^{(P)}_N(\cdot,t)$ and $R^{(Q)}_N(\cdot,t)$,  we conclude that  for any regularization order $M\geq 1$ the trapezoidal rule approximation~\cref{eq:Ntrap} of the hypersingular operator converges exponentially fast as $N$ increases~\cite{davis2007methods,trefethen2014exponentially}. Finally, fully discrete approximations of the operators $\tilde{N}$ and $\tilde{S}$ are obtained by simply evaluating at the grid points their semi-discrete versions in equations~\eqref{eq:Ntrap} and respectively~\eqref{eq:Strap}.

In order to demonstrate the fast convergence of the HDI approximation~\cref{eq:Ntrap} of the hyper-singular operator, we present in~\Cref{fig_N_orders} maximum absolute errors on the circular boundary  between our discretizations corresponding to harmonic interpolation orders~$M= 0,1$ and $2$ and a reference solution produced by the spectrally accurate evaluation of the hypersingular operator~\cite{kress2014collocation}. The reference solution was obtained using a refined uniform discretization of the interval $[0,2\pi)$ with grid size $h=\pi/640$. In the case of the application of the harmonic interpolation technique of order $M=0$---the blue curve in Figure~\ref{fig_N_orders}, the (undefined) values of the integrand at $\tau=t$  were replaced by zero.  As expected, for orders $M\geq 1$ the  kernel in~\cref{eq:hyper_rep}
becomes analytic, and therefore  exponential convergence of
\cref{eq:Ntrap} is observed as $N=2\pi/h$
increases. \Cref{fig_N_integrand} displays the smooth integrand~$R^{(P)}_N$ resulting from application of the harmonic interpolation technique for $M=1$.

\begin{figure}[h!]
\centering	
 \subfloat[][Linear and exponential convergence of the trapezoidal rule for orders $M=0$ and $M=1,2$, respectively.]{\includegraphics[scale=0.6]{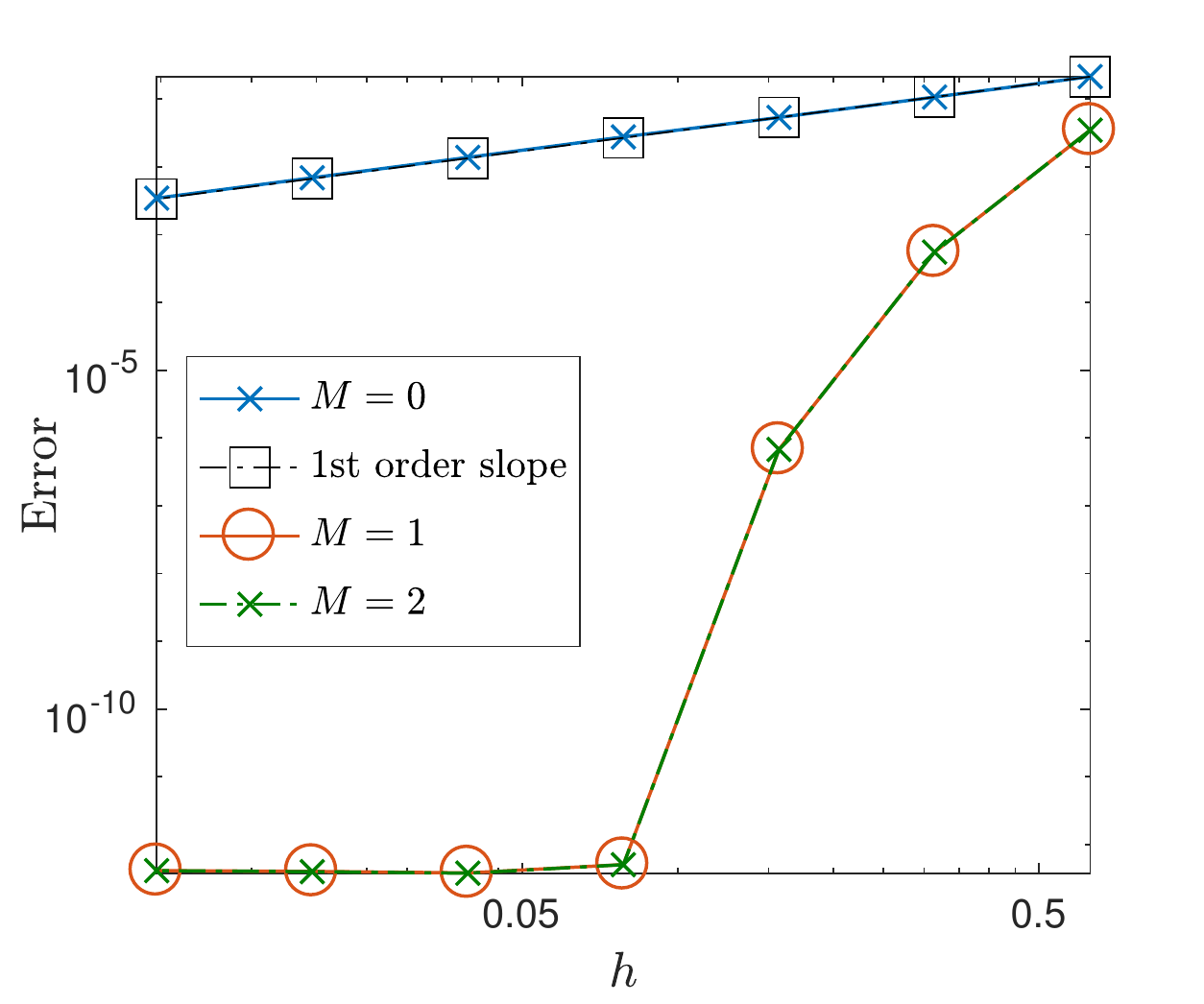}\label{fig_N_orders}}\quad
 \subfloat[][Regularized singular integrand in the hypersingular operator for $M=1$. The diagonal terms of the smoothed integrand are marked in red.]{\includegraphics[scale=0.47]{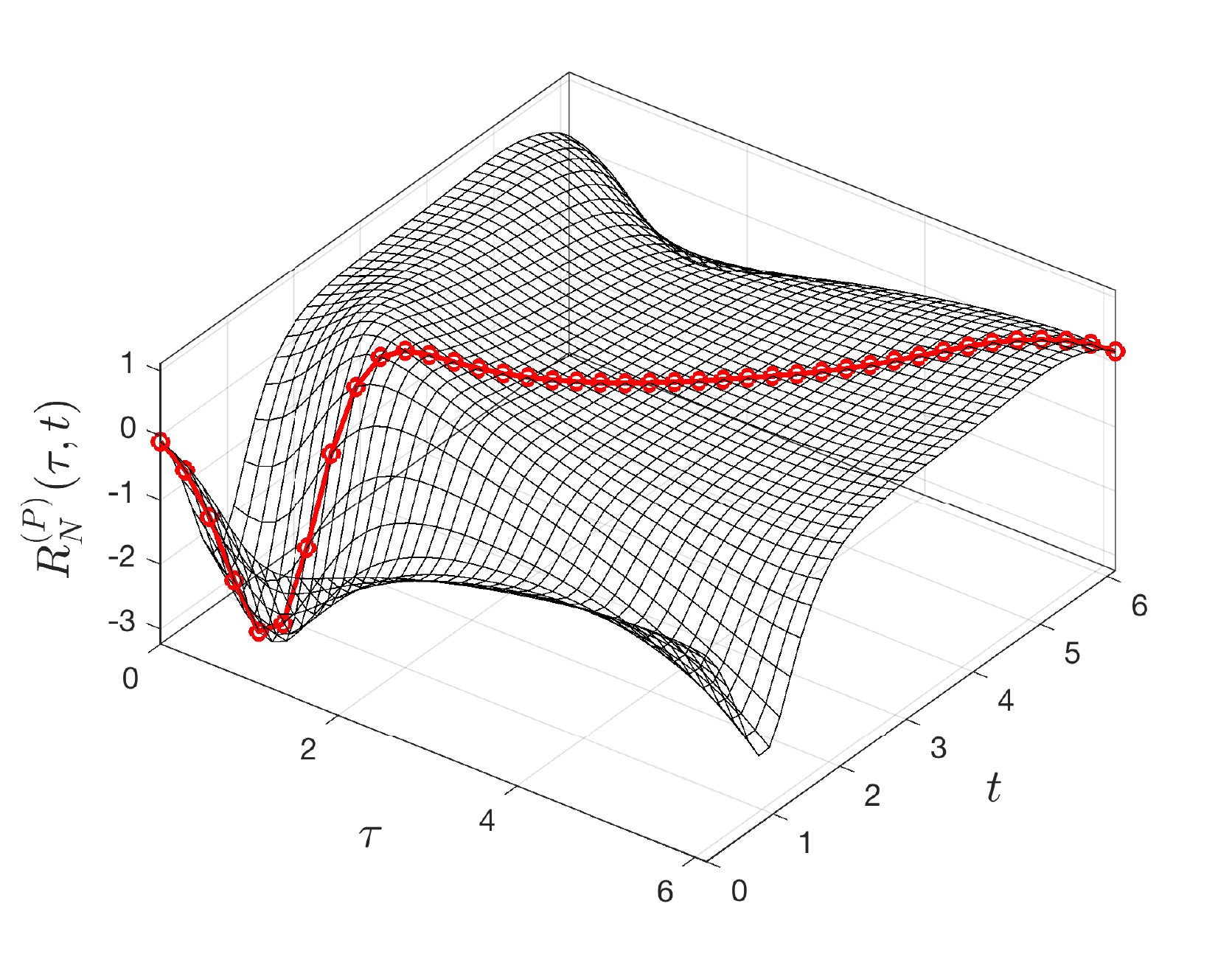}\label{fig_N_integrand}}\\
\caption{Convergence of the HDI discretization~\cref{eq:Ntrap} for the hypersingular operator $N[\varphi]$ and smoothness of the $R_N^{(P)}$. The density function utilized in these examples is $\varphi = u|_{\Gamma}$ with $u(\nex) = \e^{\sin(x_1\cos x_2)}/\sqrt{(x_1-\frac{1}{3})^2+(x_2-\frac{1}{3})^2}$ and  $\Gamma = \lf\{\sqrt{x_1^2+x_2^2}=1\rg\}$.}\label{fig:N_orders}
\end{figure}  

For the single-layer operator, in turn, the
order of convergence of the HDI discretization~\cref{eq:Strap} is limited by the polylogarithmic singularity of~$R^{(Q)}_S$ (see~\Cref{lem:cond_2}).  As $M$ increases, $R^{(Q)}_S$ becomes smoother, but the logarithmic terms cannot be completely removed by the proposed technique. The effect of the smoothness of $R^{(Q)}_S$  on the convergence of the HDI discretization is demonstrated in
\Cref{fig:SL_orders}, which displays the maximum absolute errors in the evaluation of the single-layer operators for even (\Cref{fig_S_even}) and odd (\Cref{fig_S_odd}) regularization orders $M$. The errors are measured with respect to a highly accurate evaluation of single-layer operator obtained by means of the Martensen-Kussmaul quadrature rule~\cite{KUSSMAUL:1969,MARTENSEN:1963} using a refined uniform grid with $h=\pi/640$. Interestingly, as noted in~
\Cref{rem:extra-order-single-layer} below, an extra order of convergence
is gained due to the symmetry of the integrand $R^{(Q)}_S$ for $M$ even, which explains the fact that the same
convergence rates are observed in~\Cref{fig_S_even,fig_S_odd} for even and odd orders $M$, respectively.
\begin{remark}
  \label{rem:extra-order-single-layer}
It follows from~\cite{celorrio1999euler}---where the Euler-Maclaurin
formula in the presence of a logarithmic singularity is derived---that for an odd  density interpolation order $M=2m+1$, $m\geq 0$, the approximation of the single-layer operator~\cref{eq:Strap} yields errors of order $h^{2m+3}$.

For an even  density interpolation order $M=2m$, $m\geq 0$, in turn,  we can write  $[\phi(t)-Q_S(\tau,t)]|\bnex'(\tau)|=\sin^{2m+1}(\frac{\tau-t}{2})g(\tau,t)$ where $g(\cdot,t)$ is an analytic $2\pi$-periodic function. Using this fact we have that  $R_S^{Q}(\tau,t)|\bnex'(\tau)|$ in~\cref{eq:single-layer-removed-singularity} can be expressed as
 \begin{equation}\begin{split}
R_S^{Q}(\tau,t)|\bnex'(\tau)|= \rho(\tau,t)-\frac{\sin^{2m+1}(\frac{\tau-t}{2})g(\tau,t)}{4\pi}\log\lf(\frac{|\bnex(\tau)-\bnex(t)|^2}{4\sin^2\lf(\frac{\tau-t}{2}\rg)}\rg)|\bnex'(\tau)|\end{split}\end{equation}  
in terms of 
$$
\rho(\tau,t)=-\frac{\sin^{2m+1}(\frac{\tau-t}{2})g(\tau,t)}{4\pi}\log\lf(4\sin^2\lf(\frac{\tau-t}{2}\rg)\rg),
$$
 and a $2\pi$-periodic analytic  function that is integrated with exponentially small errors by the trapezoidal rule.
Furthermore, we note that $\rho(\tau,t)$ can be split as
\begin{equation}\begin{split}
\rho(\tau,t)=&-\frac{\sin^{2m+1}(\frac{\tau-t}{2})g(t,t)}{4\pi}\log\lf(4\sin^2\lf(\frac{\tau-t}{2}\rg)\rg)\\
&-\frac{\sin^{2m+1}(\frac{\tau-t}{2})[g(\tau,t)-g(t,t)]}{4\pi}\log\lf(4\sin^2\lf(\frac{\tau-t}{2}\rg)\rg).
\end{split}\label{eq:imp_term}\end{equation}
Since the first term in~\cref{eq:imp_term} is a $2\pi$-periodic function that is odd with respect to any of the quadrature points $\tau=t_j=j\pi/N$, $j=0,\ldots,2N-1$, it readily follows that the trapezoidal rule integrates it exactly (and yields 0). On the other hand, since the second term in~\cref{eq:imp_term}  is a $2\pi$-periodic function of class $C^{2m+1}$ and the first $2m+1$ derivates of  $\sin^{2m+1}(\frac{\tau-t}{2})[g(\tau,t)-g(t,t)]$ vanish at $\tau=t$, we have from~\cite{celorrio1999euler} again,  that the trapezoidal rule  applied to this term yields errors of order $h^{2m+3}$. From these observations we thus conclude that  the trapezoidal rule approximation~\cref{eq:Strap} yields errors  of  order $h^{2m+3}$ for even $M=2m$.\end{remark}
\begin{figure}[h!]
\centering	
 \subfloat[][Trapezoidal rule convergence for 
 $M=0,2$ and 4.]{\includegraphics[scale=0.6]{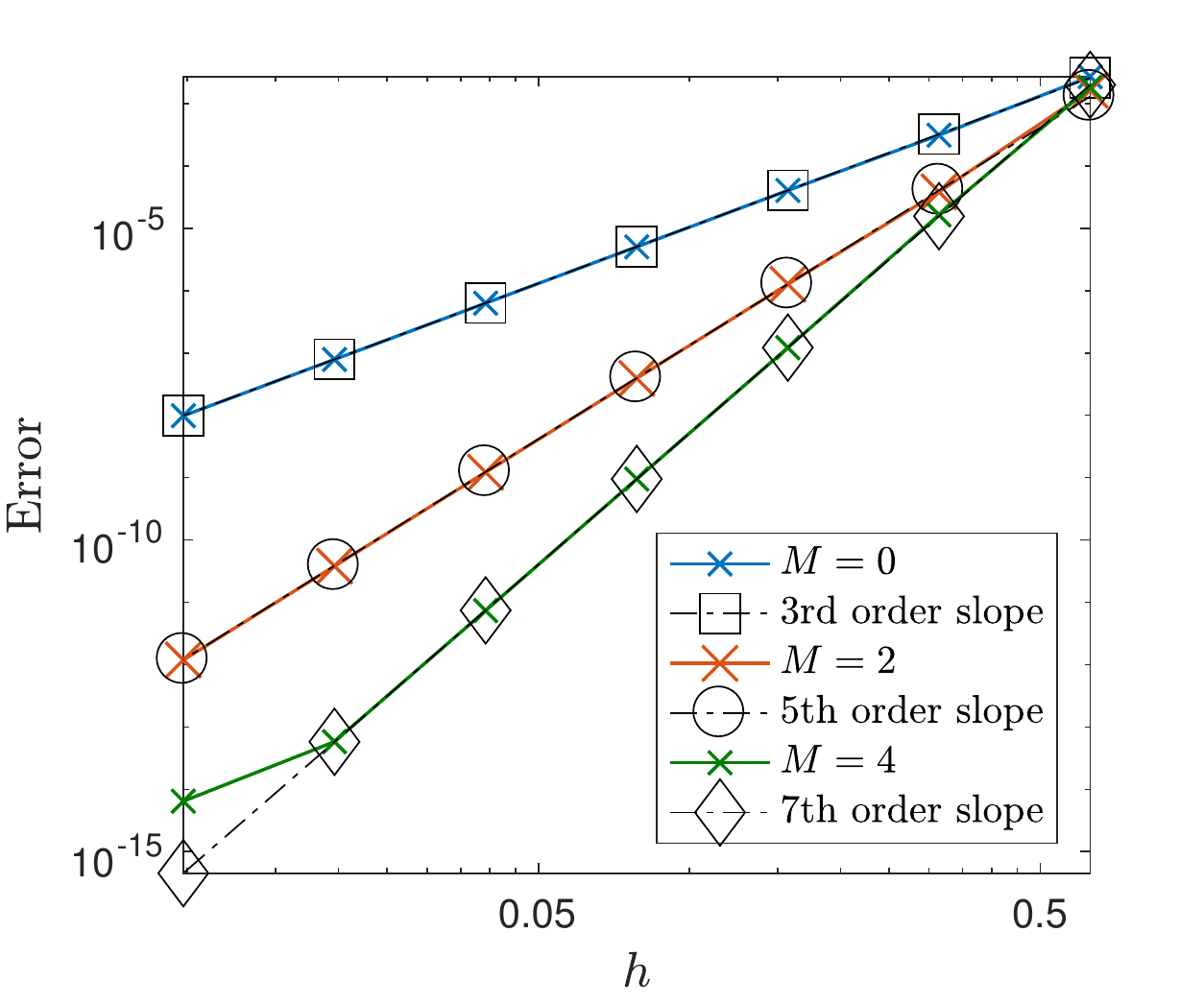}\label{fig_S_even}}\quad
 \subfloat[][Trapezoidal rule convergence for 
 $M=1,3$ and 5.]{\includegraphics[scale=0.6]{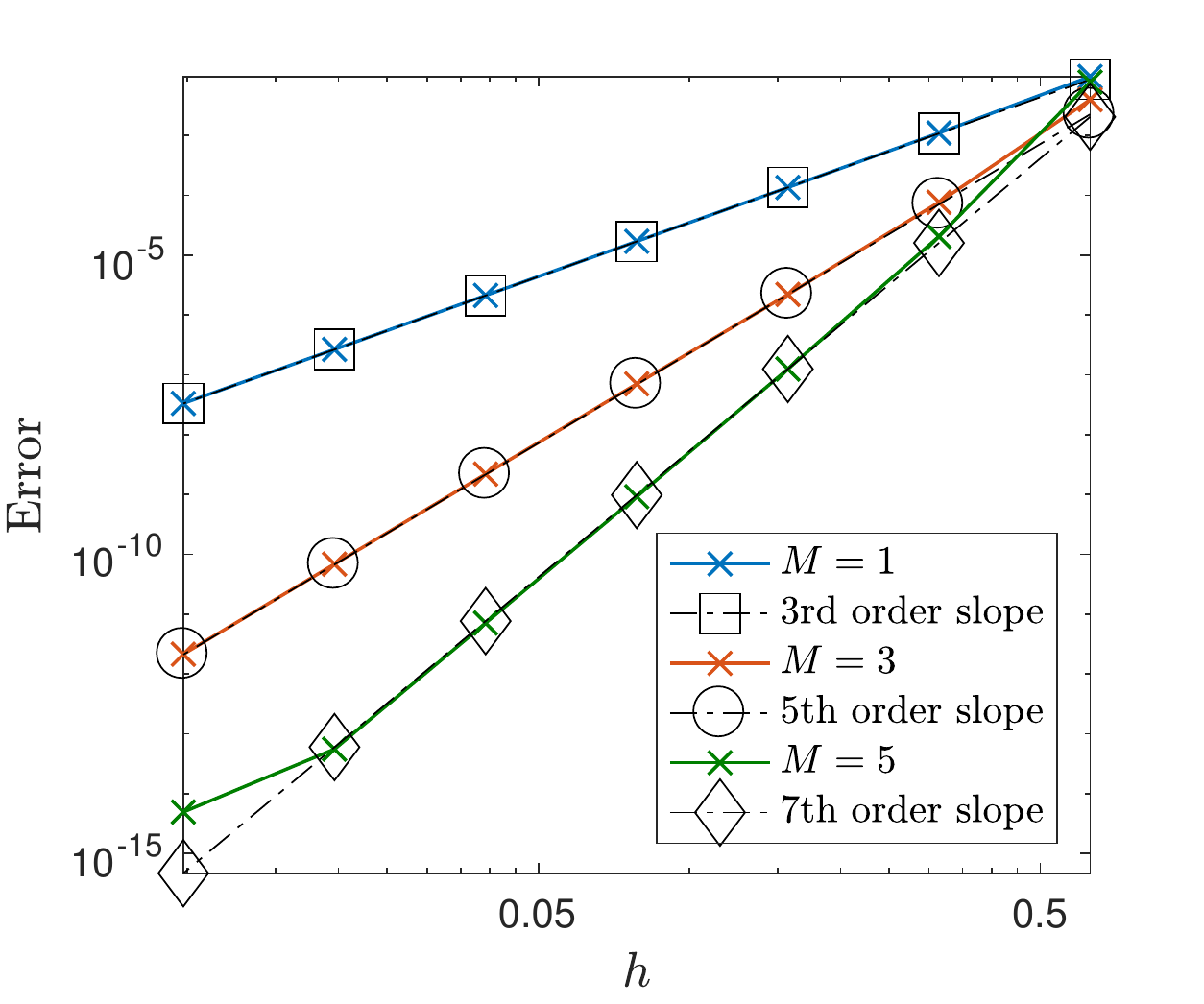}\label{fig_S_odd}}\\
\caption{Convergence of the HDI discretization~\cref{eq:Strap} of the single-layer operator $S[\varphi]$ for  density interpolation orders  (a) $M=0,2,4$  and (b) $M=1,3,5$. The density function utilized in these examples is $\varphi = u|_{\Gamma}$ with $u(\nex) = \e^{\sin(x_1\cos x_2)}/\sqrt{(x_1-\frac{1}{3})^2+(x_2-\frac{1}{3})^2}$ and $\Gamma = \lf\{\sqrt{x_1^2+x_2^2}=1\rg\}$.}\label{fig:SL_orders}
\end{figure}

{Finally, in order to illustrate the competitiveness of the proposed HDI method we present comparisons with the recently introduced  Quadrature-by-Expansion (QBX) method  of Klockner et al. \cite{klockner2013quadrature}. The reason why we consider the QBX method is that, just like the HDI kernel regularization method, the QBX method can deliver high-order discretizations of singular and nearly singular boundary integral operators in both two and three dimensions. For presentation simplicity we focus here on the evaluation of the single-layer operator. Following \cite{Barnett:2014tq}, we compute the single-layer operator by the surrogate expansion
\begin{align}
  \label{eq:5}
  \tilde S[\phi](t) = \real\left\{ \frac{-1}{2\pi}\int_\Gamma \log\lf(\zeta(\tau)-\zeta(t)\rg) \phi(\tau)\zeta'(\tau)\de\tau\right\}\approx \real \left\{ \sum_{l=0}^{p-1} c_l \lf(\zeta(t)-z_0\rg)^l \right\}, 
\end{align}
where the expansion center $z_0$ lies at a distance $\epsilon$ away from the evaluation point~$\zeta(t)$ along the normal direction to the curve, with $\epsilon>0$ (resp. $\epsilon<0$) corresponding to points $z_0$ lying outside (resp. inside) the curve.  
The coefficients $c_l$ in~\eqref{eq:5}, in turn, are given by
\begin{align}
  \label{eq:6}
  \quad c_0 = -\frac{1}{2\pi}\int_0^{2\pi} \log(\zeta(t)-z_0)\phi(t)\zeta'(t) \de t\andtext  c_l = \frac{1}{2\pi l} \int_0^{2\pi} \frac{\phi(t)}{(\zeta(t)-z_0)^l} \zeta'(t)\de t,
\end{align}
and are computed by the trapezoidal rule on a uniform oversampled grid with $\beta N=\beta (2\pi/h)$ points ($\beta>1$). We use one expansion center per discretization point and the expansion center distance to the curve ($\epsilon$) is selected to be proportional to the local distance between the discretization points ($d$) in physical space. That is, the distance to the curve $\epsilon$ of the expansion center $z_{0,k}$ associated to the discretization point $\nex_k=\bnex(kh)$, $0\leq k\leq N$, is proportional to $d=\min\{|\nex_{k}-\nex_{k-1}|,|\nex_{k}-\nex_{k+1}|\}.$ 

 A straightforward comparison between QBX and the HDI method becomes somewhat difficult due to the selection of the various  parameters involved. As shown by the analysis in \cite{Barnett:2014tq,klockner2013quadrature}, the error in the QBX discretization is governed by a subtle balance between the order of the expansion ($p$), the distance of the expansion centers from the curve ($\epsilon$), and the oversampling ratio ($\beta$). Figure~\ref{fig:SL_orders_QBX} presents a comparison between the two methods where the errors in the evaluation of the single-layer operator are displayed for various choices of the QBX $\epsilon$ parameter, keeping the order $p$ and the oversampling ratio $\beta$ constant. To make the comparison fair, an oversampled grid with $\beta N$ points is also utilized in the HDI single-layer evaluation.  Although not necessarily optimal, the oversampling ratio $\beta = 4$ and the harmonic interpolation order $M=2$ are used in both examples. We point out here, that since our technique requires the computation of high-order derivatives along the curve (that is the interpolation coefficients $c^{(M)}$ in equation~\eqref{eq:f_N} depend on derivatives of both the parametrization and of the density up to order $M$), we have observed that in practice it is advisable to use harmonic interpolation orders in the range $M \lesssim 6$. Clearly, even for $M = 2$ (i.e., a fifth-order HDI method), the observed errors are comparable in practice to those of the QBX method for small  and large values of the parameter~$p$.
\begin{figure}[h!]
\centering	
 \subfloat[][Trapezoidal rule convergence for HDI
with  $M=2$ and QBX with  $p=5$, both of which correspond to fifth order methods.]{\includegraphics[width=0.48\textwidth]{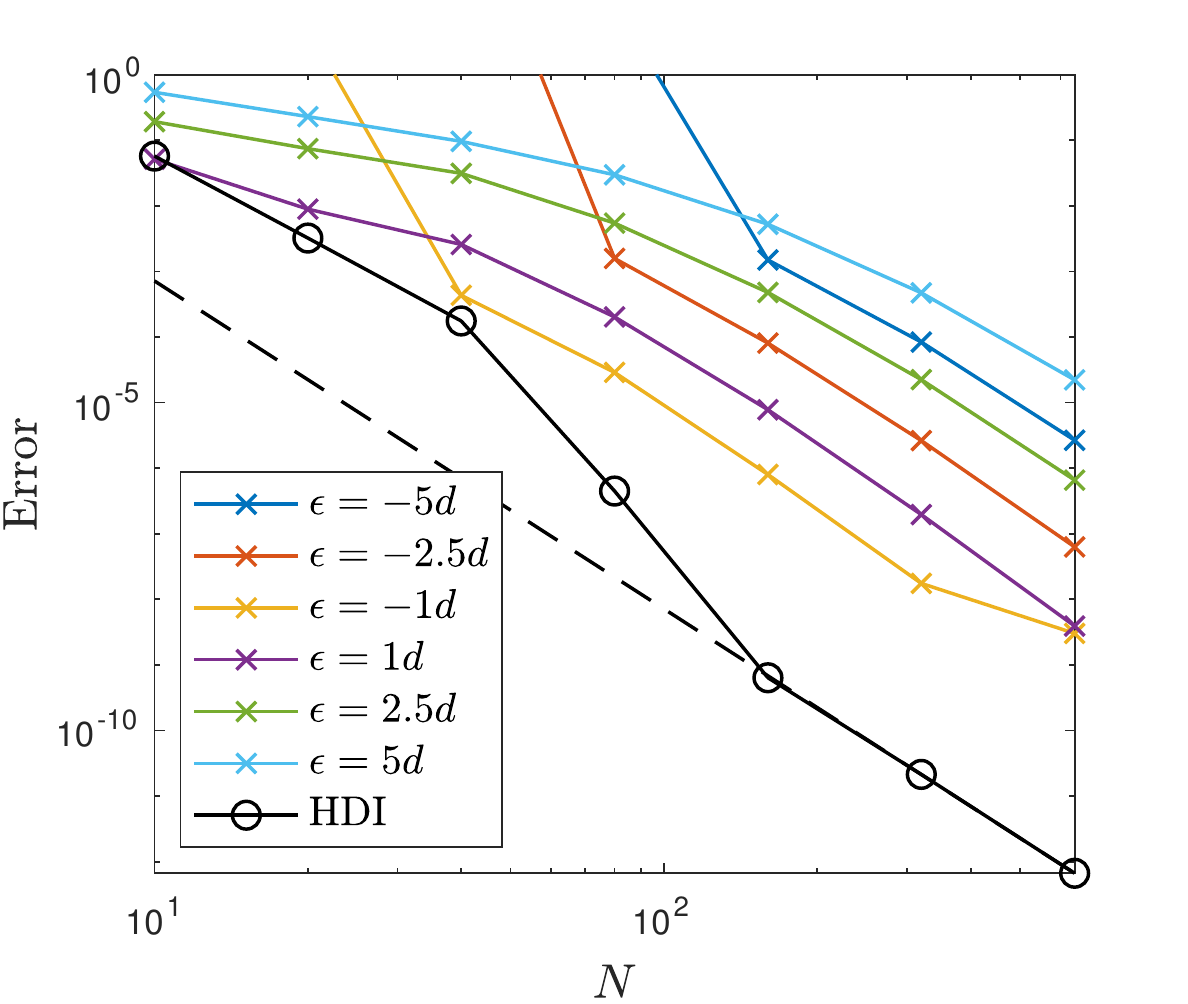}\label{fig_S_even}}\quad
 \subfloat[][Trapezoidal rule convergence for HDI with
 $M=2$ and  QBX with $p=16$, corresponding formally to a 16th order method.]{\includegraphics[width=0.48\textwidth]{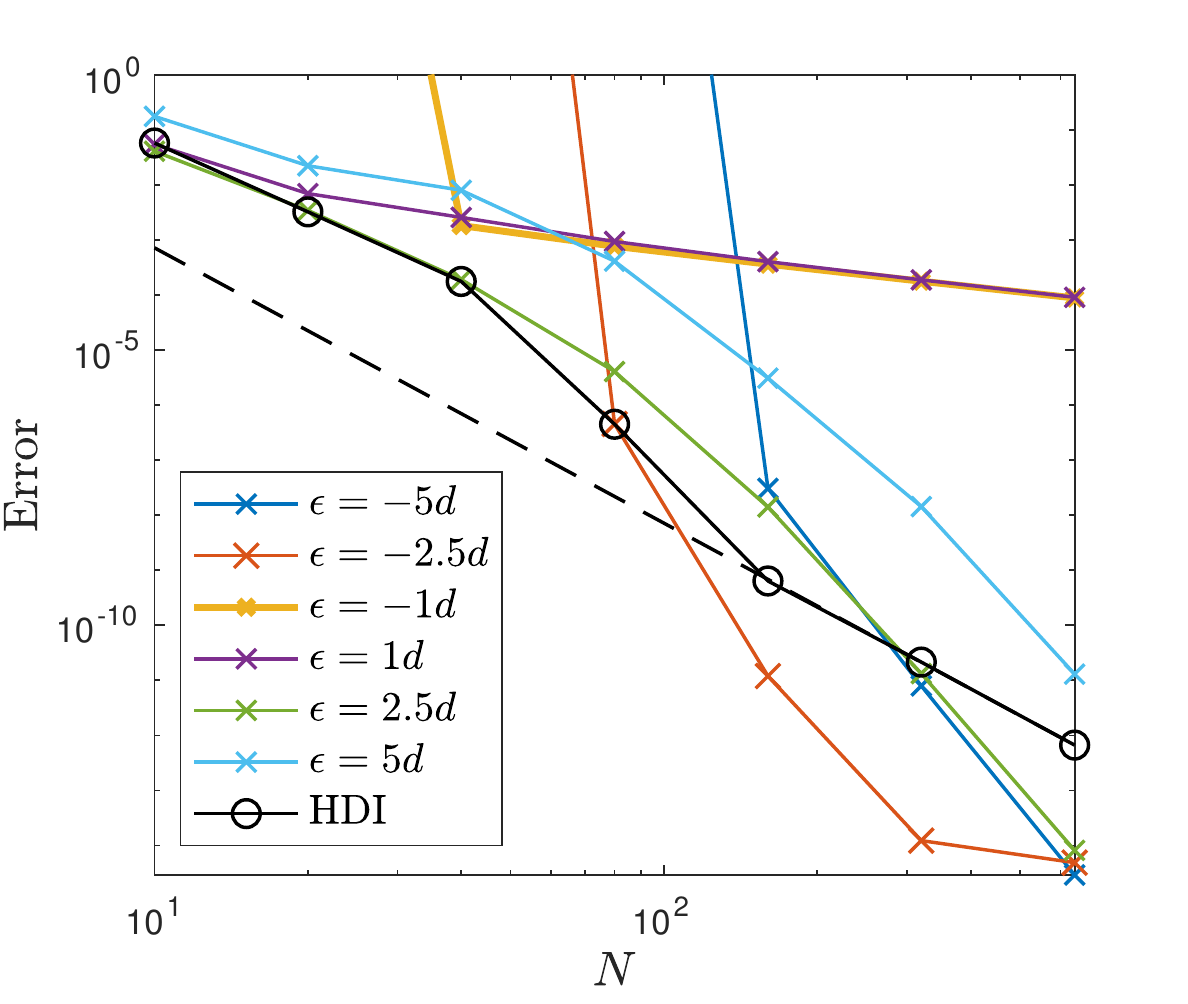}\label{fig_S_odd}}\\
\caption{Convergence of the trapezoidal rule discretization of the single-layer operator $\tilde S[\phi]$ by means of the HDI method and the QBX method for various distances $\varepsilon$ of the expansion centers. The density function utilized in these examples is $\varphi = u|_{\Gamma}$ with $u(\nex) = \e^{\sin(x_1\cos x_2)}/\sqrt{x_1^2+x_2^2}$ and $\Gamma = [\cos(t)+0.65\cos(2t)-0.65, 1.5\sin(t)]$. The dashed lines indicate the fifth-order slope.}\label{fig:SL_orders_QBX}
\end{figure}

A more thorough comparison of the two methods is presented in Table \ref{tab:QBX-vs-HOSS}, where we report the smallest error obtained using the QBX method by scanning the order parameter $p$ between $1$ and $20$, and the distance of the expansion centers to the curve $\epsilon$ between $-5d$ and $5d$ by increments of~$d$,  for four representative $\beta$ values. We then compare these errors to those obtained using the HDI method for $M=2$ (fifth-order). In conclusion, for a large range of discretization sizes ($N$) the observed HDI errors are comparable to those of the QBX method even when using optimized values of $p$ and $\epsilon$.

 \begin{table}
   \begin{center}
     \scalebox{0.95}{\begin{tabular}{c|c|c|c|c|c|c}
\toprule
$N$& \multicolumn{4}{c|} {QBX}  & \multicolumn{2}{c} {HDI} \\
\cline{2-7	}
$=\frac{2\pi}{h_{\phantom{A}}}$&$\beta=1$&$\beta=2$& $\beta=4$ & $\beta=8$ & $M=2$ & $M=2,\beta=4$ \\
\hline
10&1.7992e-01 &6.5711e-02&9.1543e-03 &2.1013e-03& 2.4120e-01&9.5925e-04\\
20&5.8644e-02&3.8506e-03&1.4650e-03&3.4615e-05&
6.3149e-02&1.1084e-05\\
40&  5.3949e-03&5.7387e-04 &2.7260e-05 &6.6149e-09&2.7396e-03&5.8703e-07\\
80&8.2720e-04  &1.9999e-05 &2.7393e-09&4.7841e-11&2.1551e-05&1.7826e-08\\
160& 1.5315e-05 &2.9372e-09 &1.4481e-12 &2.7478e-15&6.6702e-07&6.3816e-10\\
320&1.7909e-07 &1.9874e-12&2.5535e-15 &1.9984e-15&2.2100e-08 &2.1436e-11\\  
 \bottomrule
\end{tabular}}
\caption{\label{tab:QBX-vs-HOSS} Maximum errors in single-layer operator applied to the density $\varphi = u|_{\Gamma}$ with $u(\nex) = e^{x_2\sin(x_1+5)}/\sqrt{x_1^2+x_2^2}$, computed using QBX and HDI. For each column of QBX fix the oversampling ratio $\beta$, and display the minimum error found by varying $1 \leq p\leq 20$ and $-5d \leq \epsilon \leq 5d$ in increments of $d$ where the $d$ denotes the local distance between . }
\end{center}
 \end{table}}

\subsection{Nearly singular integrals in 2D}
In our next example we consider the numerical evaluation of the
single- and double-layer potentials and their gradients inside the
domain $\Omega$ enclosed by the curve
$\Gamma = \{(\cos t, \sin t/(1+\sin^6(t)),t\in[0,2\pi]\}$. The errors
are measured with respect to a manufactured  solution of the Laplace equation
produced by taking point sources at  $\nex_1 = (-0.6, 1)$,
$\nex_2= (-1.2, 0.2)$, $\nex_3 = (0.2, -1.1)$ and
$\nex_4 = (1.5, 0.2)$ that lie outside~$\Omega$. By construction
$u_{\mathrm{exact}}(\nex) =\sum_{j=1}^4 \log(|\nex-\nex_j|)$ is harmonic in~$\Omega$. 

To test the accuracy of the
evaluation of the double- and single-layer potentials and their
gradients, we first find densities $\varphi:\Gamma\to\R$ and $\psi:\Gamma\to\R$ to represent $u_\mathrm{exact}$ by means of the double- and single-layer potentials~\cref{eq:potentials}. 
 Using the double-layer representation $u_{\mathrm{exact}}  = \mathcal D[\varphi]$ in $\Omega$,  we readily obtain the
second-kind integral equation
$(-I/2+K)\varphi = u_{\mathrm{exact}}|_{\Gamma}$ for~$\varphi$. Similarly, using a single-layer representation $u=\mathcal S[\psi]$, we
obtain the first-kind integral equation
$S[\psi] = u_{\mathrm{exact}}|_{\Gamma}$ for~$\psi$. 

Both surface densities $\varphi$ and $\psi$ are then computed by means of
a spectrally accurate Nystr\"om method~\cite{MARTENSEN:1963,KUSSMAUL:1969} using a fixed number of points
$2N=200$ in the discretization of~$\Gamma$. For this discretization,
the resulting approximate densities exhibit maximum absolute errors smaller that~$10^{-11}$. 
The corresponding potentials and their gradients are then evaluated everywhere inside
$\Omega$ by means of direct application of  the trapezoidal rule
to the integral expressions~\cref{eq:single_layer_pot} and \cref{eq:double_layer_pot} for the regularized single- and double-layer potentials, respectively.  The logarithm in base ten of the absolute errors in the evaluation of the single-layer potential and its gradient for $M=0,2$ and $4$ are displayed in~\Cref{fig:SL_near}, while the error plots corresponding to the double-layer potential and its gradient are displayed in~\Cref{fig:DL_near}.
 As demonstrated in these figures, the HDI technique
reduces significantly the numerical errors at observation points that are close the boundary.

\begin{figure}[h!]
\centering	
 \subfloat[][Without kernel regularization, $E=9.01\cdot 10^{-1}$.]{\includegraphics[scale=0.08]{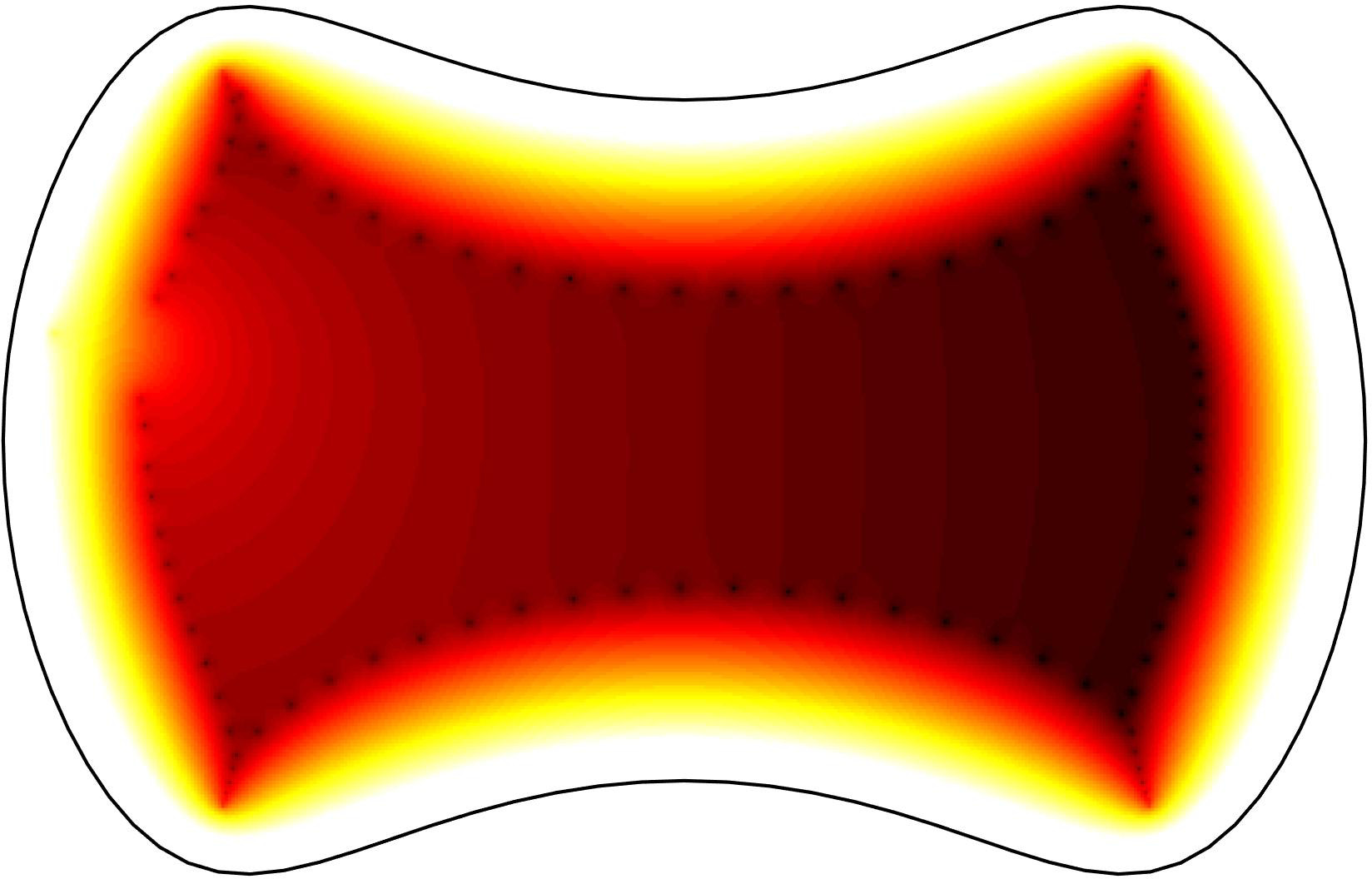}\label{fig_S_none}}\quad
 \subfloat[][$M=0$, $E=6.01\cdot 10^{-3}$.]{\includegraphics[scale=0.08]{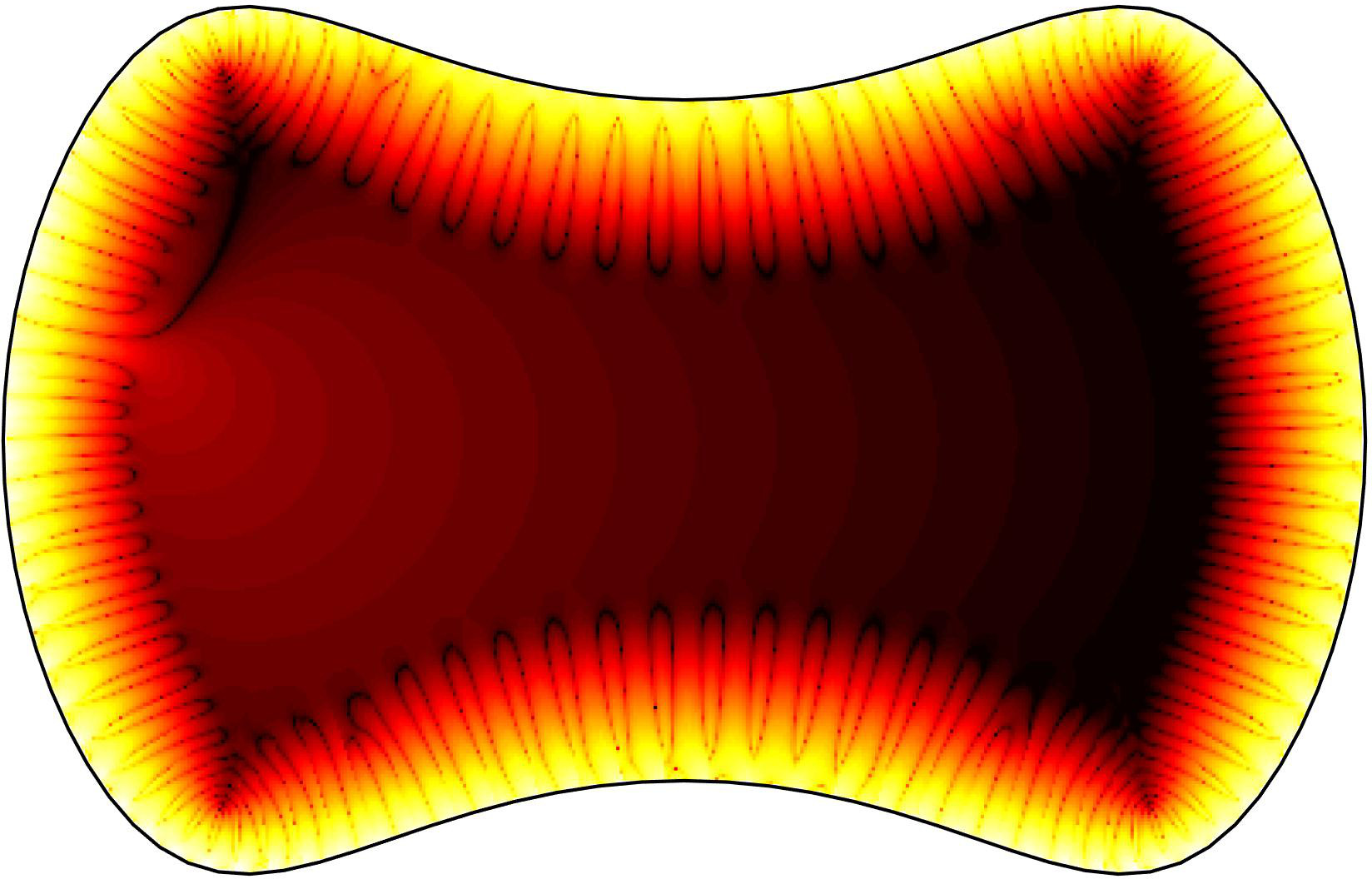}\label{fig_S_1}}\quad
 \subfloat[][$M=4$, $E=7.23\cdot 10^{-6}$]{\includegraphics[scale=0.08]{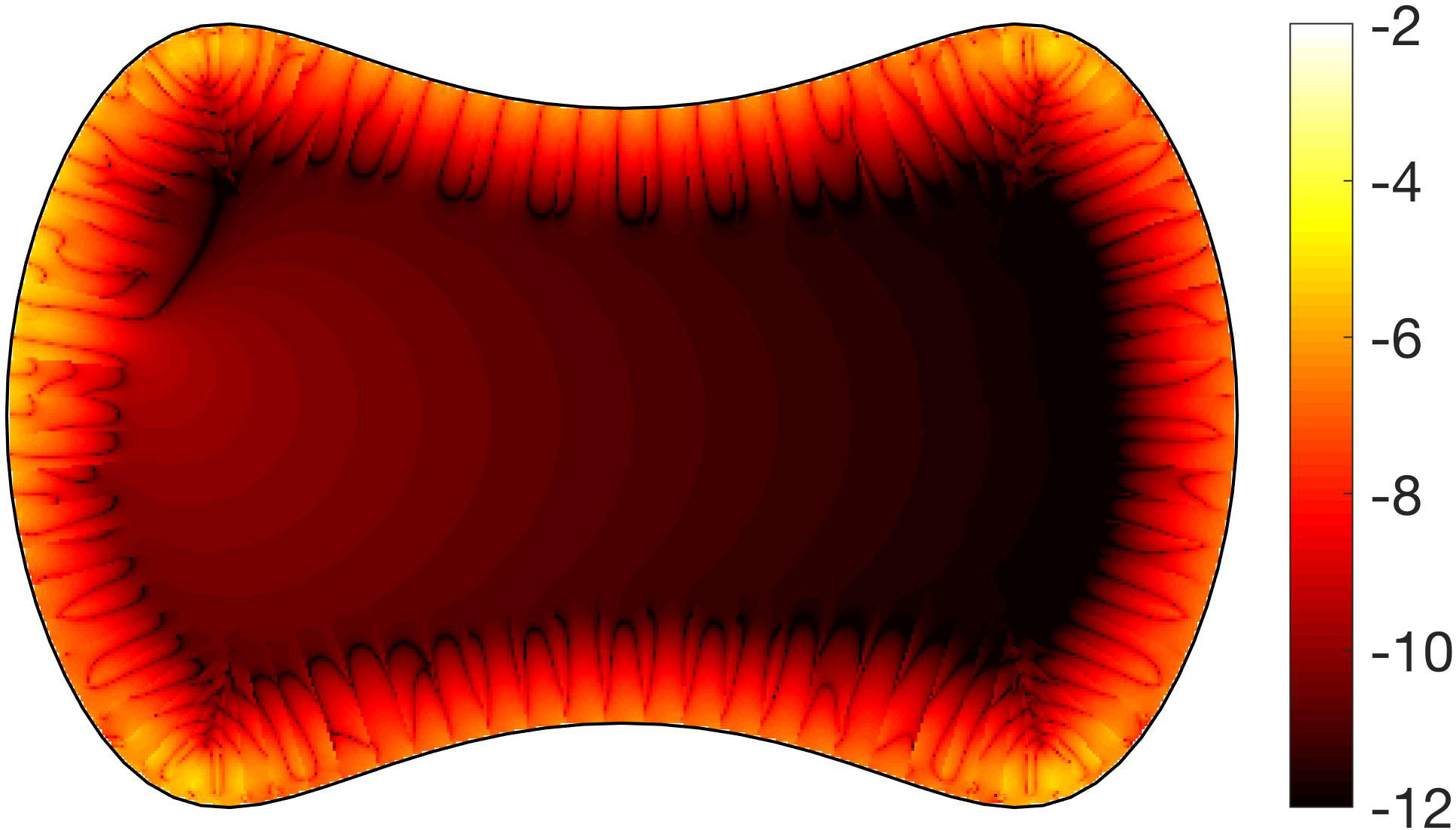}\label{fig_S_5}}\\
 \subfloat[][Without kernel regularization, $E = 2.7\cdot 10^{1}$.]{\includegraphics[scale=0.08]{grad_SL_none_v0}\label{fig_grad_S_none}}\quad
 \subfloat[][$M=0$, $E = 5.54\cdot 10^{-1}$.]{\includegraphics[scale=0.08]{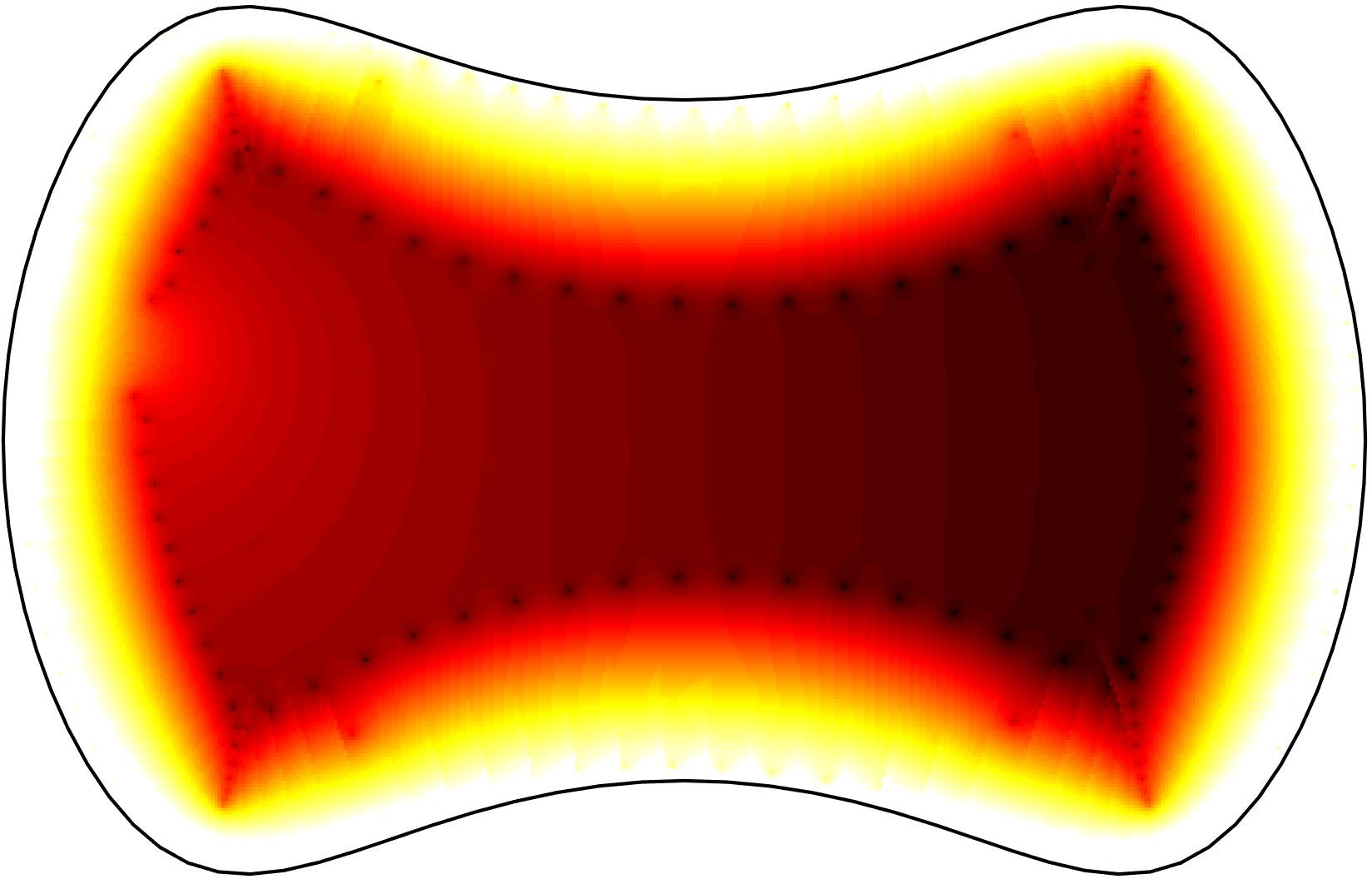}\label{fig_grad_S_0}}\quad
 \subfloat[][$M=4$, $E=6.61\cdot 10^{-4}$.]{\includegraphics[scale=0.08]{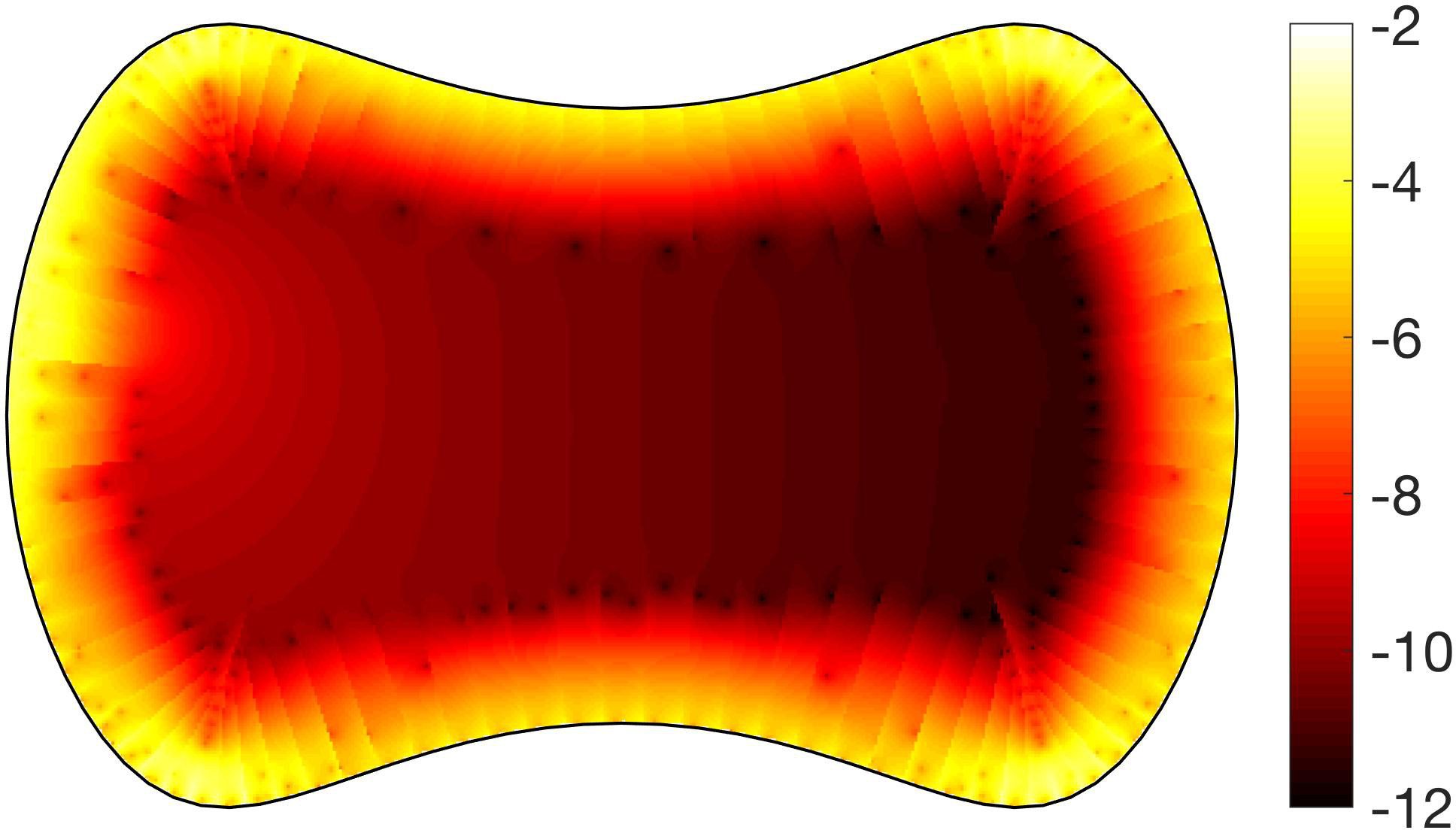}\label{fig_grad_S_4}}
\caption{Logarithm in base ten of the absolute error in the
  evaluation of the single-layer potential (top row) and its gradient (bottom row). The maximum absolute error $E$ is indicated in the caption corresponding to each plot. HDI is used for all observation points at a distance smaller than $10h=\pi/10$ from the boundary.}\label{fig:SL_near}
\end{figure}  

\begin{figure}[h!]
\centering	
\subfloat[Without kernel regularization, $E=1.38\cdot 10^1$.]{\includegraphics[scale=0.08]{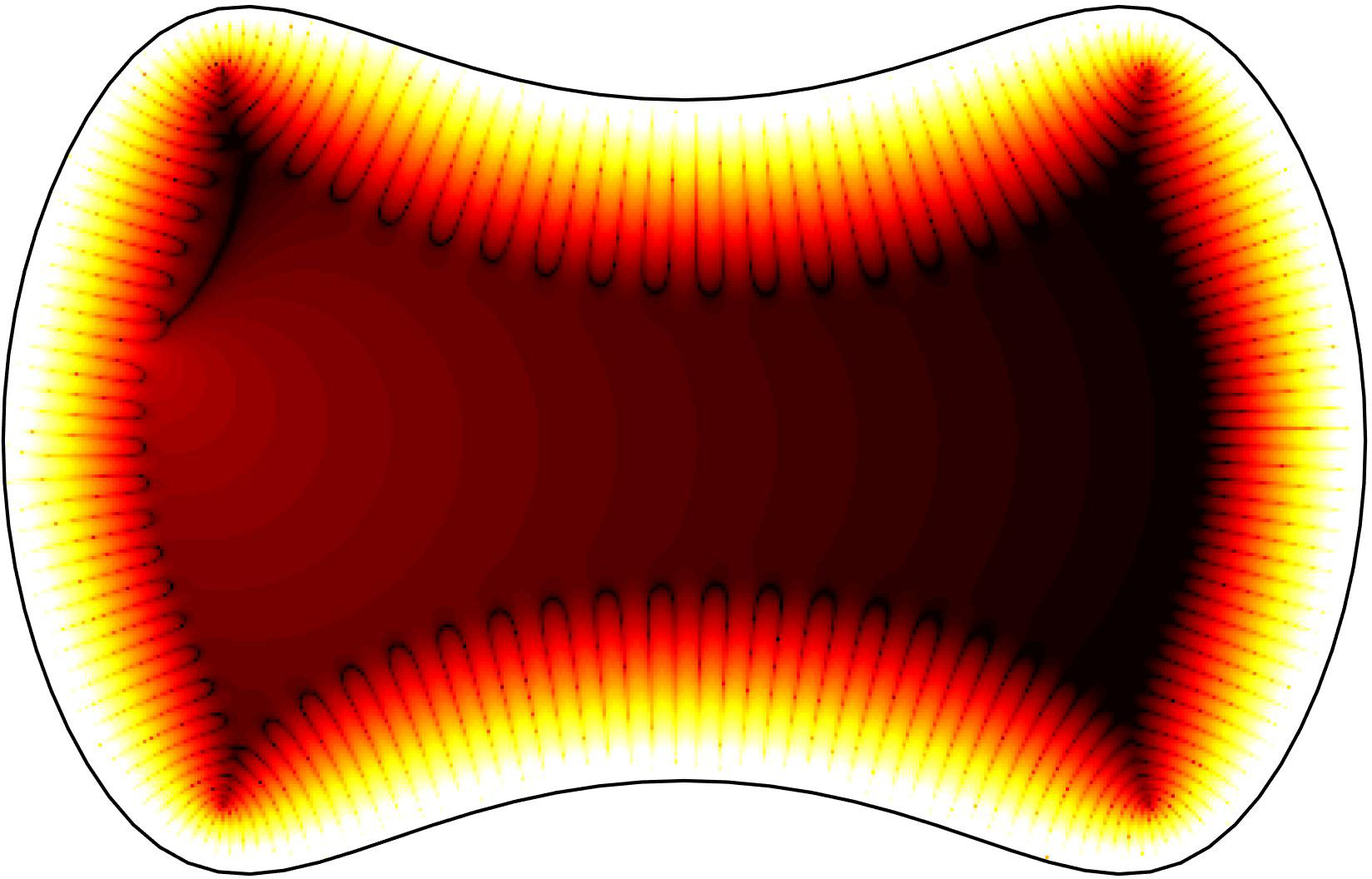}\label{fig_D_none}}\quad
\subfloat[$M=0$, $E=6.35\cdot 10^{-2}$.]{\includegraphics[scale=0.08]{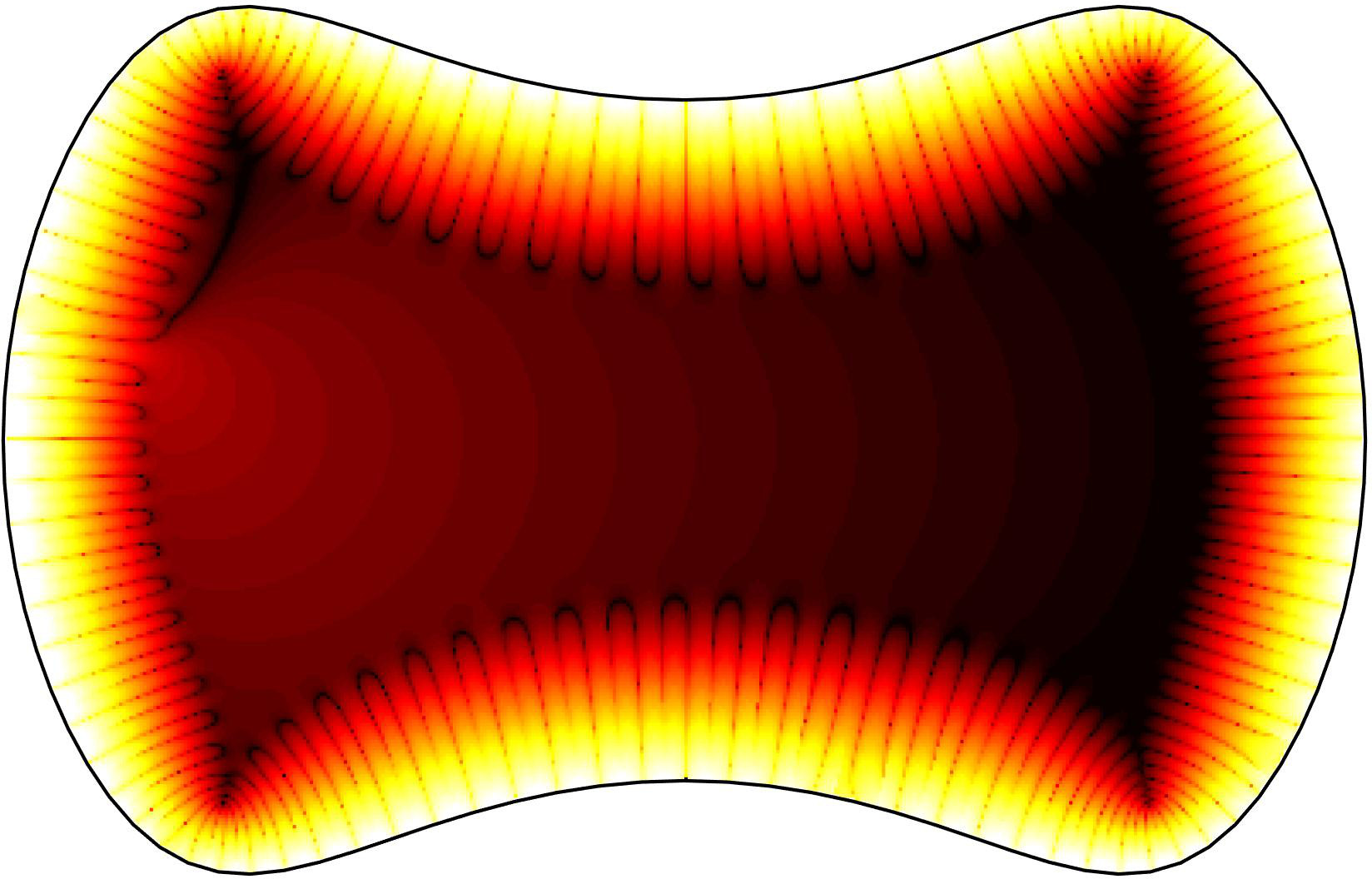}\label{fig_D_0}}\quad
\subfloat[$M=4$, $E=1.54\cdot 10^{-5}$.]{\includegraphics[scale=0.08]{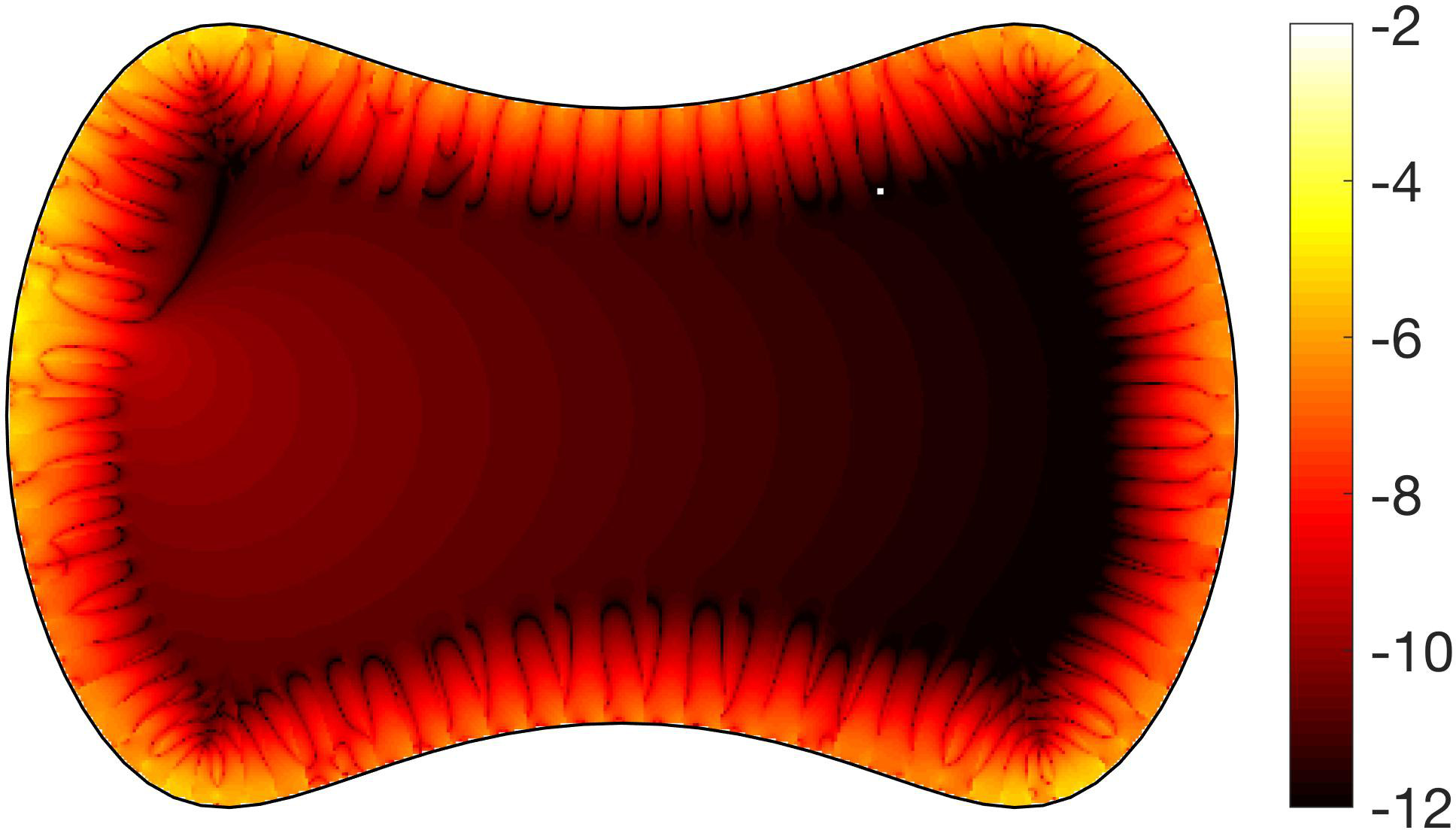}\label{fig_D_4}}\\
 \subfloat[Without kernel regularization, $E=3.05\cdot 10^{3}$.]{\includegraphics[scale=0.08]{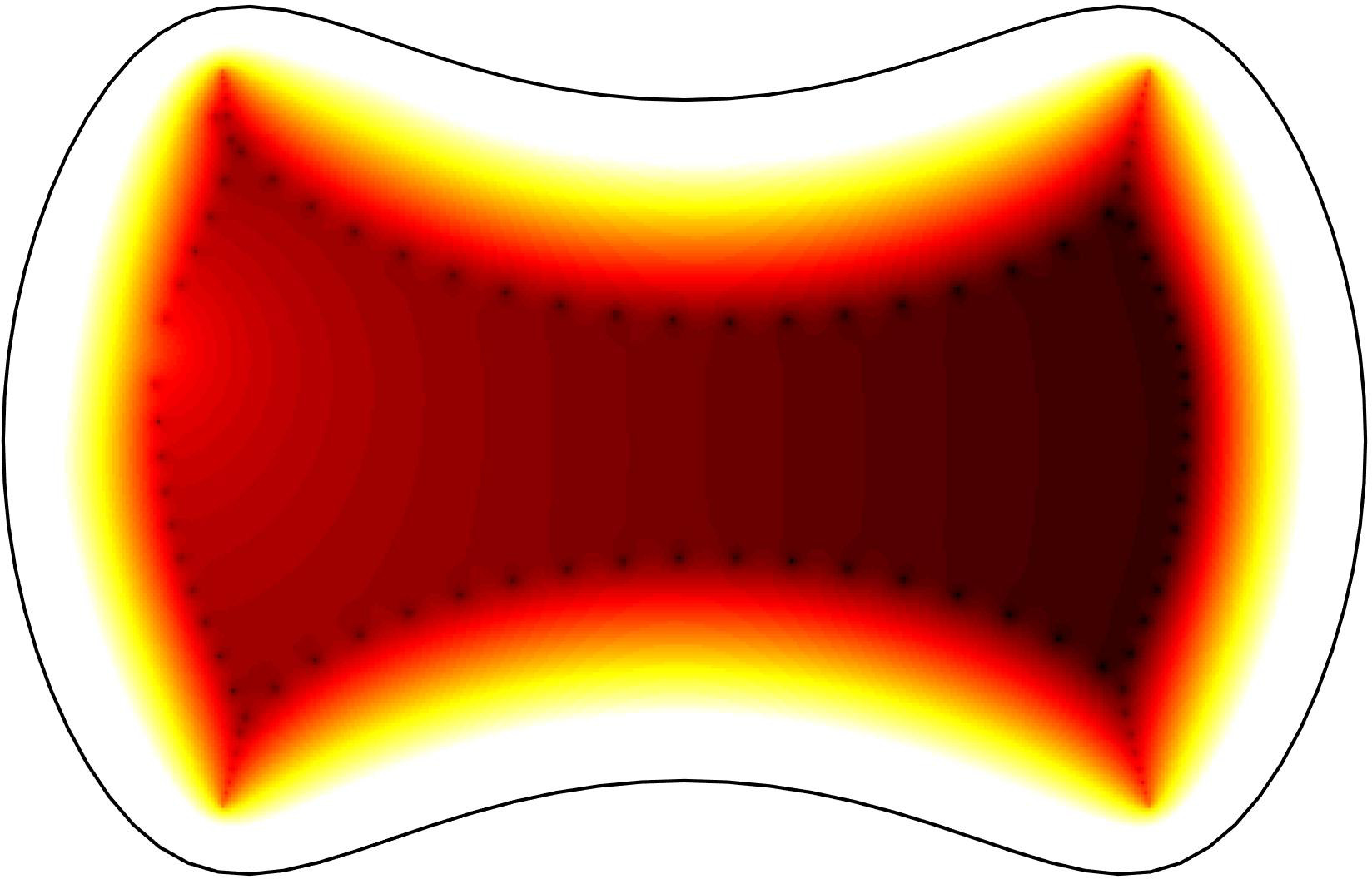}\label{fig_grad_D_none}}\quad
 \subfloat[$M=0$, $E=2.00\cdot 10^2$.]{\includegraphics[scale=0.08]{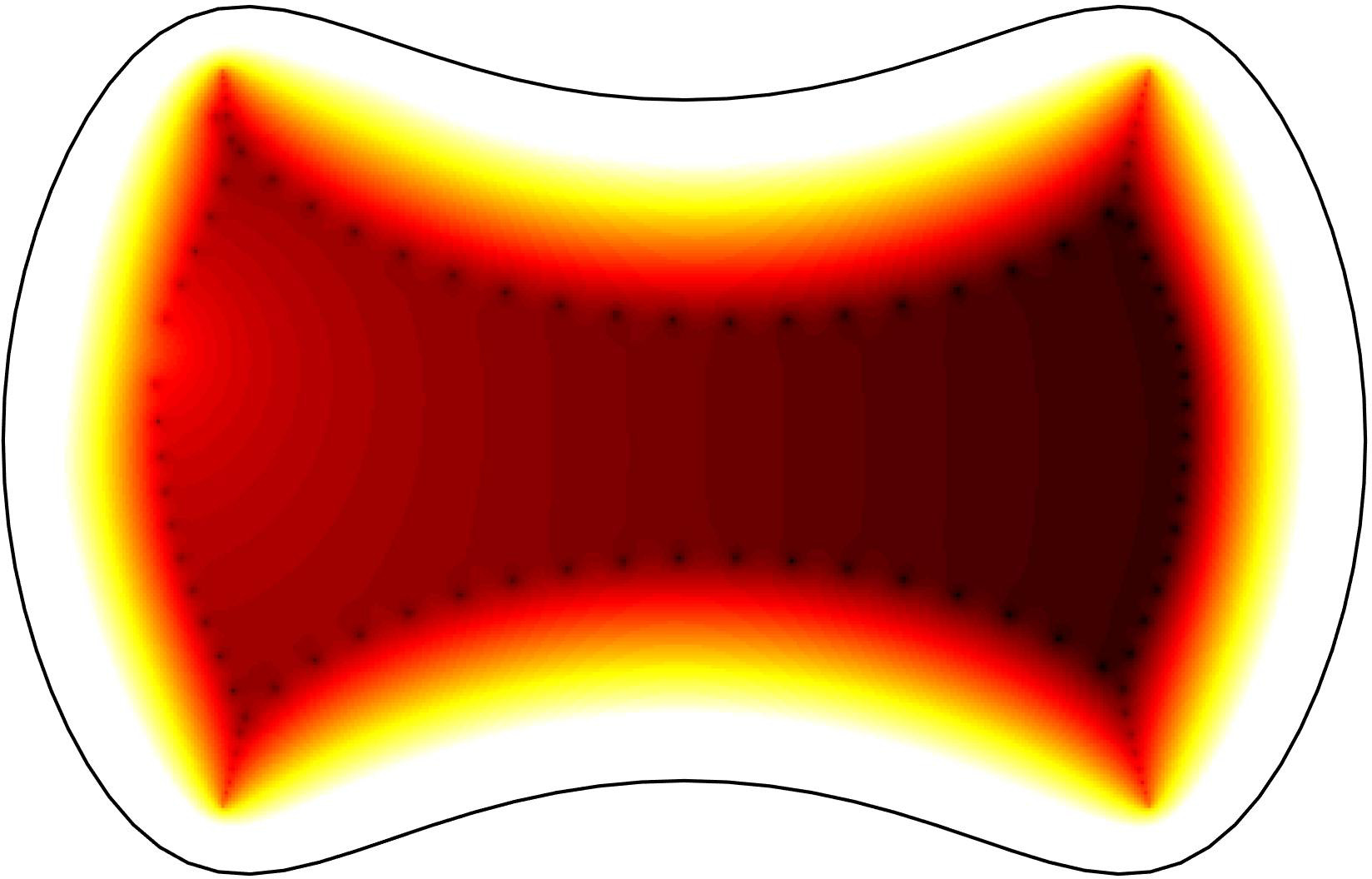}\label{fig_grad_D_0}}\quad
 \subfloat[a][$M=4$, $E=5.38\cdot 10^{-4}$.]{\includegraphics[scale=0.08]{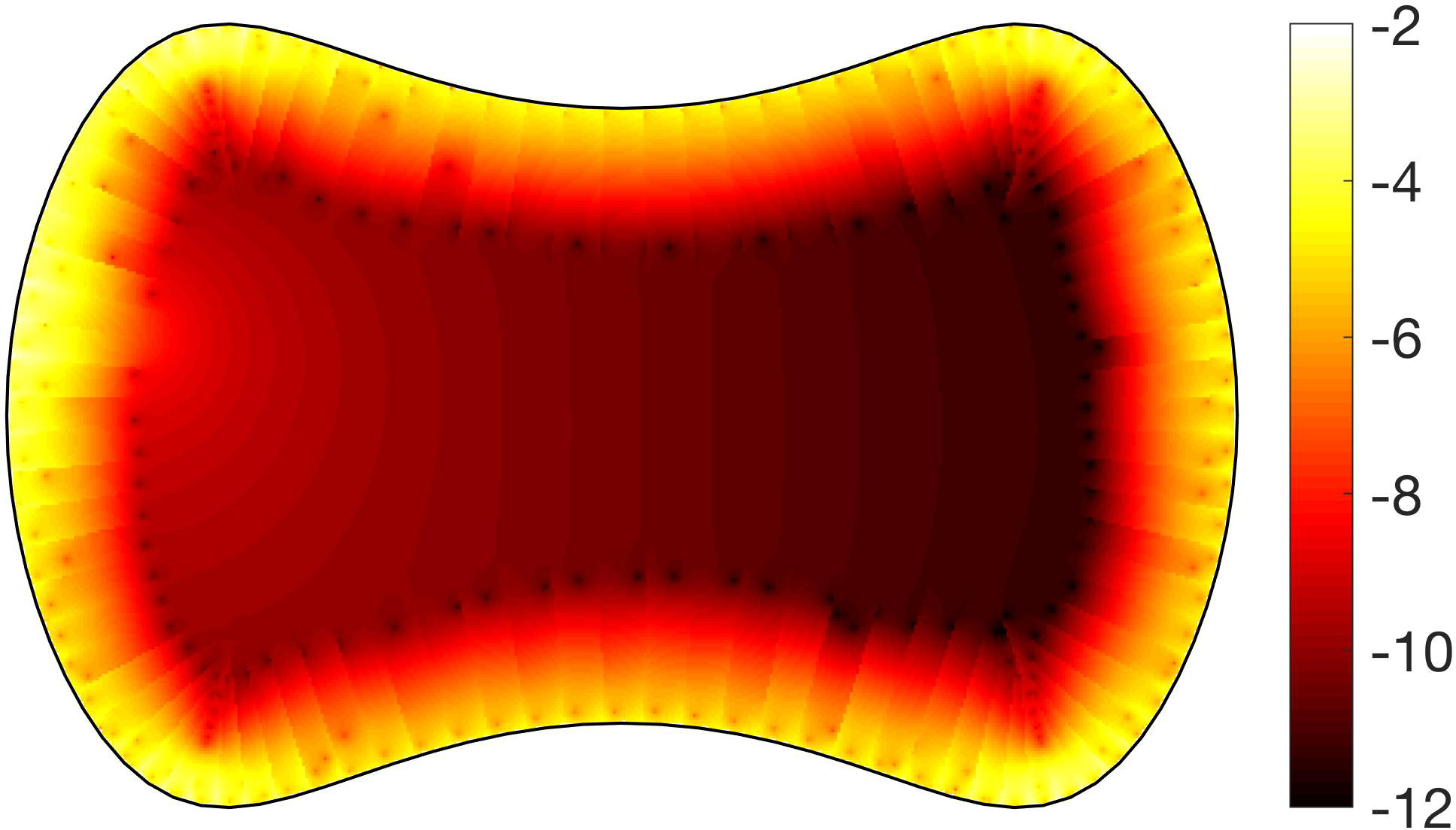}\label{fig_grad_D_4}}
 \caption{Logarithm in base ten of the absolute error in the
   evaluation of the double-layer potential (top row) and its gradient
   (bottom row). The maximum absolute error $E$ is indicated in the
   caption corresponding to each plot. HDI method is used
   for all observation points at a distance smaller than $10h=\pi/10$
   from the boundary.}\label{fig:DL_near}
\end{figure}

\subsection{Electrical response of closely packed biological cells}
Finally, this section considers an application of the HDI method
to the computation of the electrostatic potential in presence of
closely packed cells. A detailed formulation of this problem, which
has applications in gene transfection, electrochemotherapy of tumors,
and cardiac defibrillation, is presented in the recent
contribution~\cite{ying2013fast}. A third-order boundary integral
equation method for the numerical solution of this challenging
problem---based on the previous work~\cite{beale2001method}---is also presented in~\cite{ying2013fast}.  Here we
compare the accuracy of our approach with the method presented
in those references. To this end we consider a
benchmark problem consisting of $20$
elliptical cells whose centers, semi-axes, and orientation angles are
given in~\cite[Table~4]{ying2013fast} (see
also~\Cref{fig:pot_ellls_eval} below).

In detail, in this application we look for an electrostatic potential given by 
\begin{equation}
\Phi(\nex) = -\mathcal D[v](\nex)+\mathcal S[q](\nex)-\bold E\cdot\nex,\quad\nex\in\R^2\setminus\Gamma,\label{eq:elst_pot}
\end{equation}
in terms of the constant electric field $\bold E=(1,0)$ and the single- and double-layer potentials defined in~\cref{eq:potentials}. Here $\Gamma$ denotes the multiply connected curve  $\Gamma=\bigcup_{k=1}^{20}\Gamma_k$ where $\Gamma_k$, $k=1,\ldots,20$, are the boundaries of each individual elliptical cell. Letting $\mu = (\sigma_e-\sigma_i)/ (\sigma_e+\sigma_i)$ with $\sigma_i=1$ and $\sigma_e=2$ denoting  the electric conductivities of the interior and exterior domains, respectively, we have that the unknown charge density $q$ in~\cref{eq:elst_pot} is given by the solution of the following second-kind integral equation
\begin{equation}
\lf(\frac{I}{2}-\mu K'\rg)[q](\nex) = -\mu N[v](\nex)-\mu \bold E\cdot n(\nex) + j(\nex),\quad\nex\in\Gamma,\label{eq:IE_cells}
\end{equation}
where $n$ is the unit normal to $\Gamma$ and where the functions $v$  and $j$ are assumed known (see~\cite{ying2013fast} for details). 

Clearly, evaluation of the adjoint double-layer ($K'$) and
hypersingular ($N$) operators in~\cref{eq:IE_cells} involve
integration over each one of the curves $\Gamma_k$, $k=1,\dots,20$. In
view of~\Cref{lem:cond_1}, when evaluated on a curve $\Gamma_{k}$, the
$K'$ integrand on $\Gamma_k$ is a smooth function, so no  density interpolation is required. The $N$ integrand over $\Gamma_k$, in turn,
requires a  density interpolation order $M\geq 1$ for it to become a
smooth real analytic function. Now, the $K'$ and $N$ integrands over
$\Gamma_{k'}$, $k'\neq k$, are nearly singular if $\Gamma_{k}$ is
``close" to $\Gamma_{k'}$. In this case we evaluate the nearly
singular integrals utilizing the smoothing procedure presented
in~\Cref{sec:eval-layer-potent}.

\Cref{tab:comparison} displays the maximum absolute errors in the
charge density $q$ obtained by means of three different BIE methods,
namely; the HDI kernel regularization method  used in
conjunction with the trapezoidal rule; the third-order
 kernel regularization method of Beale, Lai, and Ying (BLY) introduced in
references~\cite{beale2001method,ying2013fast}; and the spectrally
accurate method of Kress~\cite{kress2014collocation}\footnote{Note that ``method of Kress" here refers to the high-order method for evaluation of the hypersingular operator via trigonometric interpolation~\cite{kress2014collocation}, not the Martensen-Kussmaul method described in the~book~\cite{COLTON:2012}.} (which is only
used for evaluation of the hypersingular operator, while all other
relevant integrals were directly approximated by the
trapezoidal rule). The latter is considered here for reference and in
order to highlight the importance of properly treating nearly singular
integrals. The errors corresponding to the BLY method were taken
directly from~\cite[Table 5]{ying2013fast} for the parameter values
$\gamma=3$ and $C=4$ in that reference. Clearly, for $M=2$ our
approach matches the accuracy of the BLY method and for $M>2$ the
HDI  approach is substantially more accurate than the BLY method
for the problem considered.  The improvements as the harmonic interpolation order $M$ increases are evident. 
 \begin{table}
   \begin{center}
     \scalebox{0.95}{\begin{tabular}{c|c|c|c|c|c|c|c}
\toprule
$2N$& \multicolumn{5}{c|} {HDI}  & \multicolumn{1}{c|} {BLY} & \multicolumn{1}{c} {K} \\
\cline{2-6}
$=\frac{2\pi}{h_{\phantom{A}}}$&$M=1$&$M=2$& $M=3$ & $M=4$ &$M=5$  & \\
\hline
64&$2.85\cdot 10^{-2}$&$5.64\cdot 10^{-3}$&$4.71\cdot 10^{-4}$&$ 1.00\cdot 10^{-4}$&$1.84\cdot 10^{-5}$&$1.25\cdot 10^{-3}$&$\ 2.75\cdot 10^{+1^{\phantom{1}}}$\\
128&$4.74\cdot 10^{-3}$ &$2.52\cdot 10^{-4}$ &$2.61\cdot 10^{-5}$  &$ 2.69\cdot 10^{-6}$ & $1.64\cdot 10^{-7}$  &$1.88\cdot10^{-4}$  &$2.48\cdot 10^{+1}$\\
256&$5.70\cdot10^{-4}$  &$1.73\cdot 10^{-5}$ &$1.12\cdot 10^{-6}$  &$ 8.19\cdot 10^{-8}$ & $8.09\cdot 10^{-9}$  &$2.89\cdot10^{-5}$  &$4.26\cdot 10^{+0}$\\
512&$8.76\cdot 10^{-6}$ &$4.36\cdot 10^{-7}$ &$1.24\cdot 10^{-8}$ &$ 3.46\cdot 10^{-10}$& $4.67\cdot 10^{-11}$&$3.48\cdot 10^{-6}$ &$8.23\cdot 10^{-2}$\\
 \bottomrule
\end{tabular}}
\caption{\label{tab:comparison} Absolute errors, measured in the maximum norm, in the charge density~$q$ which is computed by solving~\cref{eq:IE_cells} by three different methods, namely; the high-order  density interpolation (HDI) technique proposed here; Beale, Lai and Ying (BLY) method~\cite{beale2001method,ying2013fast}, and; Kress' method (K)  \cite{kress2014collocation} without regularization of nearly singular integrals. Note that this table displays the most accurate results reported in~\cite[Table 5]{ying2013fast} for the same benchmark problem and the same number of discretization points $2N$.}
\end{center}
 \end{table}
 
Finally, we show in~\Cref{fig:pot_ellls_eval} the absolute error in
 the electrostatic potential~\cref{eq:elst_pot} for various harmonic interpolation orders $M$
 but for a fixed number ($2N=64$) of discretization points (per ellipse). We can clearly see the
 improvement as $M$ increases, specially when compared to
 \Cref{fig:pot_ellls_eval}a, where no regularization of any kind is used. In fact for $M=5$ (see
 ~\Cref{fig:pot_ellls_eval}f) we obtain global errors in the order of
 $10^{-7}$ even when using only $2N=64$ discretization points per
 ellipse. We note that there are numerical methods~\cite{Barnett:2015kg} capable of producing higher accuracies than those produced by HDI for Stokes flow in the same closed-packed elliptical configurations considered in this section (actually, the singularities of BIE formulations of Stokes problems are no worse than this of the Laplace BIOs). However, the highly-performant methods in~\cite{Barnett:2015kg} rely heavily on complex-analytic methods, and as such are not extendable to three dimensional applications, a fact stated by the authors themselves while concluding their contribution. In contrast, the relatively simple HDI technique, while not the most performant method in 2D, extends easily to three dimensions, and it is capable to consistently produce levels of accuracy (e.g., $10^{-5}$) that are more than appropriate for engineering applications.

\begin{figure}[h!]
\centering	
\includegraphics[scale=0.45]{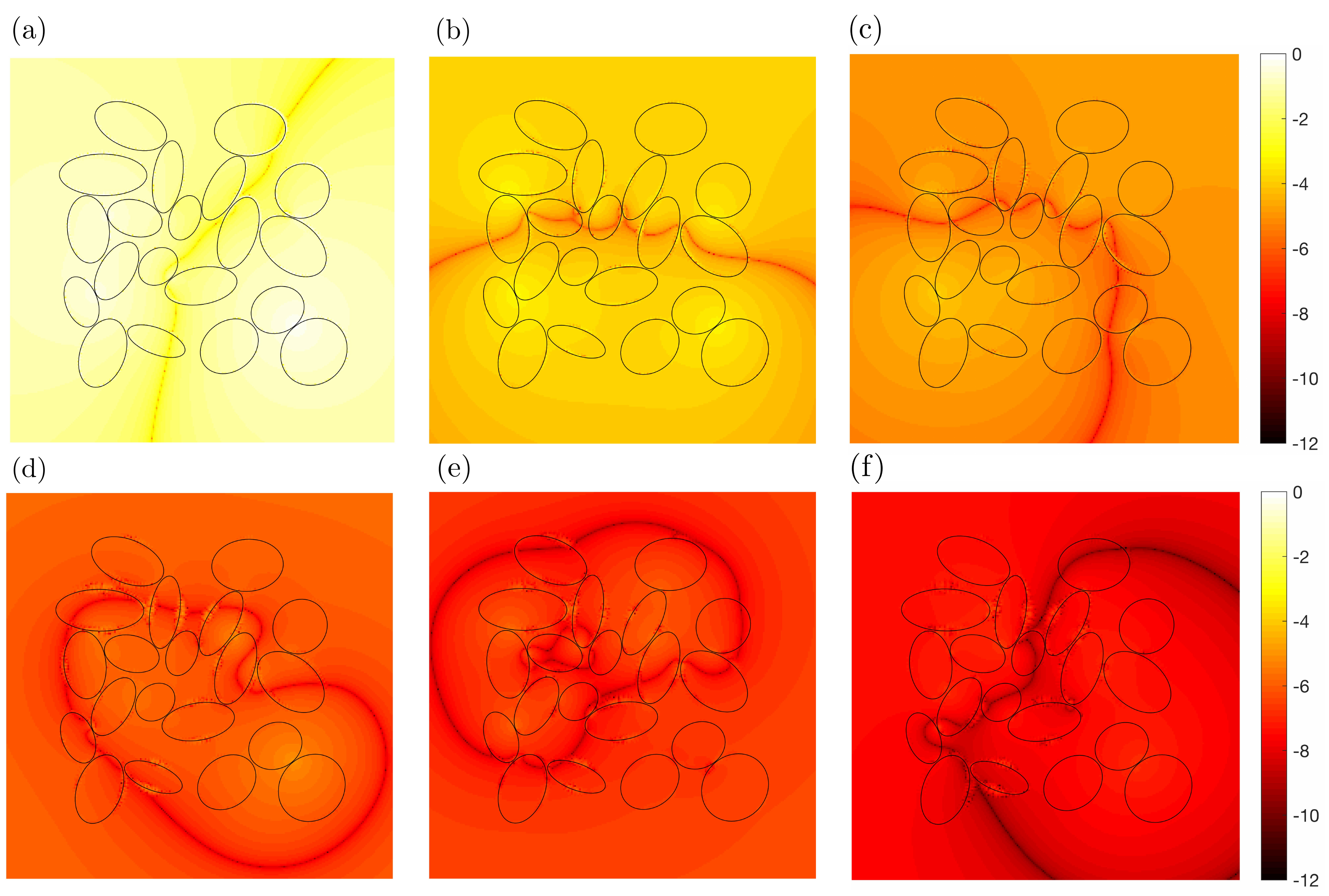}
\caption{Logarithm in base ten of the absolute error in the
  electrostatic potential~\cref{eq:elst_pot} for various harmonic interpolation orders. The same number $2N=64$ of discretization points
  is used on each one of the twenty  ellipses. (a) Errors without the HDI kernel regularization . (b)--(f); errors using HDI in both integral equations and potential evaluations for $M=1,2,3,4$ and $5$, respectively. The maximum errors displayed in figures (b)--(f) are $1.95\cdot 10^{-3}$, $1.98\cdot 10^{-4}$, $1.34\cdot 10^{-5}$, $2.83\cdot 10^{-6}$ and $7.05\cdot 10^{-7}$, respectively.}\label{fig:pot_ellls_eval}
\end{figure}

\subsection{Singular and nearly singular integrals in 3D}\label{3d_num}
In order to produce accurate numerical discretizations of the integral operators~\eqref{eq:IE_3d} we resort to non-overlapping surface representation with quadrilateral patches. To the best of the authors' knowledge, the high-order surface discretization approach described in this section was originally developed by Oscar Bruno's group at Caltech for the high-order evaluation of BIOs by means of polar and, more recently,  rectangular-polar singularity resolution techniques~\cite{bruno2018chebyshev,turc2011efficient}. In detail, the surface $\Gamma$ is represented as the union $\Gamma=\bigcup_{k=1}^{N_p} \overline{\mathcal P^k}$  of non-overlapping  patches  $\mathcal P^k$, $k=1,\dots,N_p$, where $\mathcal P^k\cap\mathcal P^{l}=\emptyset$ if $k\neq l$. Associated to each surface patch $\mathcal P^k$ there is a  bijective $\mathcal C^\infty$ coordinate map $\bnex^k:\mathcal H\mathcal\to \overline{\mathcal P^k}$,
\begin{equation}\label{eq:maps}
\bnex^k(\bxi) := \lf(x^k_1(\xi_1,\xi_2),x^k_2(\xi_1,\xi_2),x^k_3(\xi_1,\xi_2)\rg),\quad k=1,\ldots,N_p,\quad (\bxi=(\xi_1,\xi_2))
\end{equation} where the domain $\mathcal H = [-1,1]\times [-1,1]\subset\R^2$ is henceforth referred to as the \emph{parameter space}. Figure~\ref{fig_beam_param} illustrates a set of six coordinate patches that make up the surface of a bean shaped domain. The coordinate maps~\eqref{eq:maps} are selected in such a way that the unit normal
\begin{equation}
\bnor^k(\bxi) = \frac{\p_1\bnex^k(\bxi)\wedge  \p_2\bnex^k(\bxi)}{|\p_1\bnex^k(\bxi)\wedge  \p_2\bnex^k(\bxi)|}\label{eq:unit_normal_j}
\end{equation}
at the point $\bnex^k(\bxi)\in\mathcal P^k$ points outward to the surface $\Gamma$.  As described in Section~\ref{sec:3D},  the numerical evaluation of the integral operators~\eqref{eq:IE_3d} as well as the layer potentials~\eqref{eq:lay_pots} by means of the proposed  density interpolation technique requires (1) integration of functions that are at least continuous on the parameter space $\mathcal H$ and (2) numerical differentiation of smooth functions defined on the surface $\Gamma$. Using the surface parametrization, the surface integral of a sufficiently regular function $F:\Gamma\to\R$---such as the integrands on the right-hand-side of the identities~\eqref{eq:3d_HS} and~\eqref{eq:lay_pots}---is given by
$$
\int_{\Gamma}F(\nex)\de s = \sum_{k=1}^{N_p} \int_{\mathcal H}F\lf(\bnex^k(\bxi)\rg)|\p_1\bnex^k(\bxi)\wedge\p_2\bnex^k(\bxi)|\de \bxi.
$$
\begin{figure}[h!]\centering
 \subfloat[][Bean-shaped surface~\cite{Bruno:2001ima} and its parametric representation using six non-overlapping coordinate patches.]{\includegraphics[scale=1]{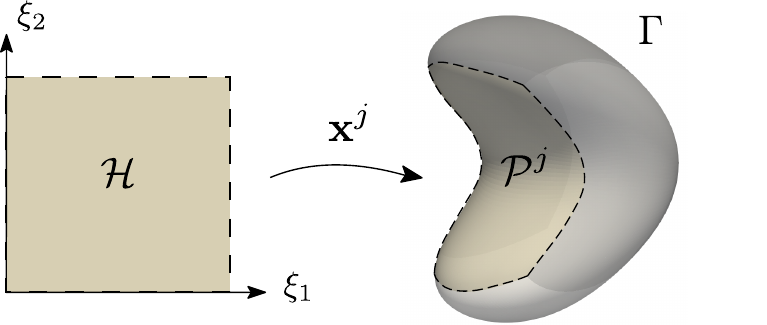}\label{fig_beam_param}}\qquad
 \subfloat[][Discretization of the bean-shaped surface produced by a $20\times 20$ Chebyshev grid~\eqref{eq:2d_grid} in each one of the six patches.]{\includegraphics[scale=0.45]{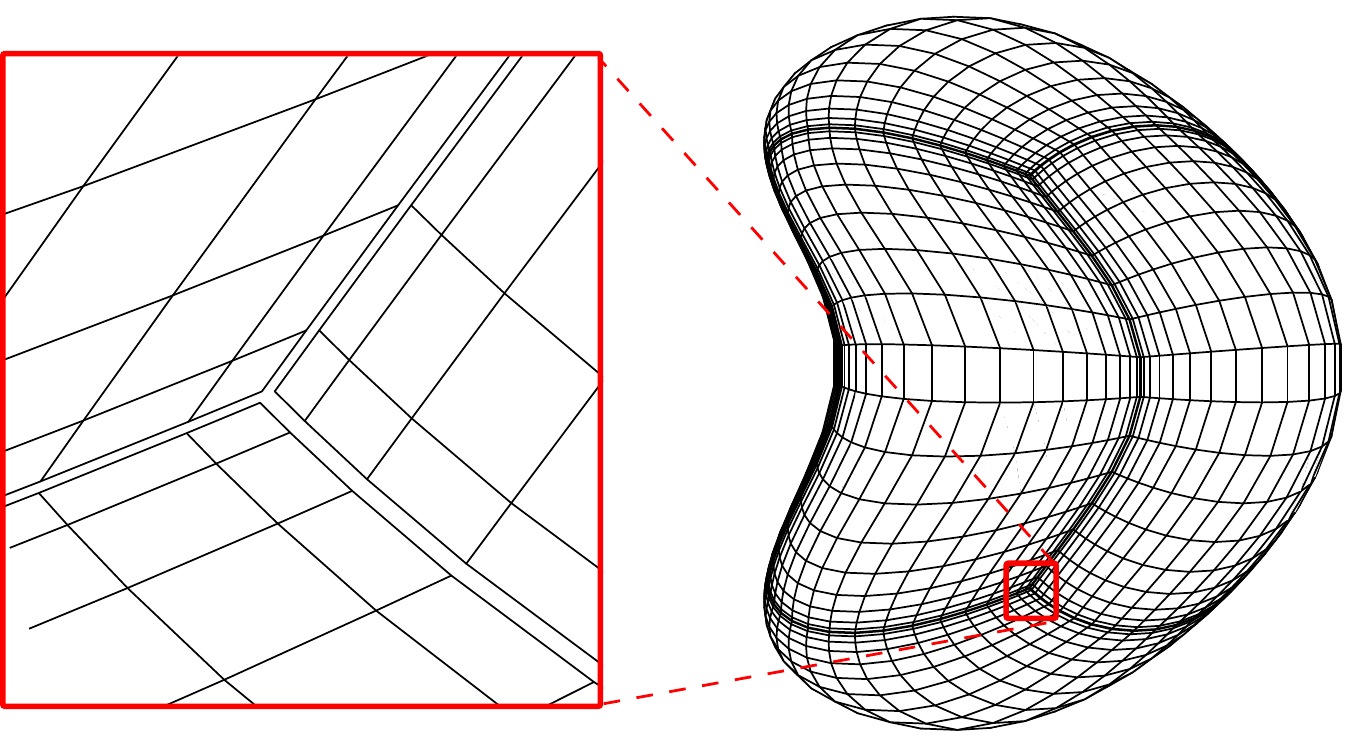}\label{fig_beam_disc}}\\ 
\caption{Example of a non-overlapping surface parametrization and its discretization using Chebyshev grids.}\label{fig:patches}
\end{figure}
 In order to evaluate accurately the integrals above, we employ \emph{open Chebyshev grids} in the parameter space $\mathcal{H}$ to collocate the functions $F\lf(\bnex^k(\bxi)\rg)$. Accordingly, numerical integration over the parameters space $\mathcal H$ is  carried out by means of the so-called  Fej\'er's first quadrature rules~\cite{davis2007methods}.  Specifically, the numerical value of the integral of functions $f:\mathcal H\to\R$ is then approximated by the quadrature rule
\begin{equation}
\int_{\mathcal H}f(\bxi)\de \bxi \approx \sum_{i=1}^N\sum_{j=1}^N f(t_i,t_j)\omega_{i}\omega_j,\label{eq:quad_rule}
\end{equation}
where $\mathcal H$ is discretized by the $N\times N$ tensor-product grid 
\begin{equation}
\lf(t_i, t_j\rg)\in\mathcal H=[-1,1]\times [-1,1],\qquad i,j=1,\ldots, N,\label{eq:2d_grid}
\end{equation}
where the quadrature points $t_j$ are the Chebyshev zero points 
\begin{equation}
t_j := \cos\lf(\vartheta_{j}\rg),\quad\vartheta_j := \frac{(2j-1)\pi}{2N},\quad j=1,\ldots,N,\label{eq:grid_points}
\end{equation}
and where the Fej\'er quadrature weights are given by
\begin{equation}
 \omega_j:= \frac{2}{N}\lf(1-2\sum_{\ell=1}^{[N/2]}\frac{1}{4\ell^2-1}\cos(2\ell\vartheta_{j})\rg),\quad j=1,\ldots,N.\label{eq:weights}
\end{equation}
  The quadrature weights~\eqref{eq:weights} can be efficiently computed  by means of the Fast Fourier Transform (FFT)~\cite{waldvogel2006fast}. The quadrature rule~\eqref{eq:quad_rule} yields spectral (super-algebraic) accuracy for integration of smooth $C^\infty(\mathcal H)$  functions. (For presentation simplicity we selected here  the same numbers $N$ of points to discretize both variables $\xi_1$ and $\xi_2$, but this need not necessarily be the case.)

 Another key feature of the proposed discretization scheme is that derivatives of smooth functions $f:\mathcal H\to\C$ can be computed with spectral accuracy by means of FFT algorithms. 
In detail, partial derivatives $\p^\alpha f$, $\alpha=(\alpha_1,\alpha_2)$, can be approximated on the tensor-product grid~\eqref{eq:2d_grid} from  the grid sample $\{f(t_i,t_j)\}_{i,j=1}^{N}$  as 
$$\p^\alpha f(t_i,t_j)\approx (-1)^{\alpha_1+\alpha_2}\sin(\vartheta_i)^{-\alpha_1}\sin(\vartheta_j)^{-\alpha_2}\lf(D^\alpha_{\rm FFT}F\rg)_{i,j},$$ 
where $D^\alpha_{\rm FFT}F$ corresponds to the numerical derivative of $F(\vartheta,\vartheta')=f(\cos\vartheta,\cos\vartheta')$ of order $\alpha_1$ (resp. $\alpha_2$) in the variable $\vartheta$ (resp. $\vartheta'$) on the grid $(\vartheta_i,\vartheta_j)$, $i,j=1,\ldots,N$. Owing to the fact the latter is an uniform grid, $D^\alpha_{\rm FFT}F$ can be computed from the discrete Fourier transform of $\{f(t_i,t_j)\}_{i,j=1}^N$  which can in turn be obtained by means of the FFT~\cite{johnson2011notes}. We thus conclude that all the partial derivatives of the coordinate maps $\bnex^k$ and unit normals $\bnor^k$ on the grid~\eqref{eq:2d_grid}---which are needed in the construction of the matrix $A$ corresponding to  the harmonic polynomial interpolation procedure~\eqref{eq:mat_entries}---are efficiently and accurately obtained by means of the Chebyshev-FFT differentiation procedure outlined above.  Similarly,  the partial derivatives of the density function $\varphi:\Gamma\to\R$ on a  patch $\mathcal P^k$---which are needed for the construction of the right-hand-side vectors~\eqref{eq:rhs_vecs}---are also evaluated by means of this procedure, which has to be applied in this case to the function $\phi^k(\bxi) = \varphi(\bnex^k(\bxi))$. A summary of the numerical procedure for the evaluation of the single-layer operator is presented below (completely analogous procedures can be followed for  evaluation of the double-layer, adjoint double-layer and hypersingular operators).

\begin{algorithm}[H]
 \KwData{Grids $\{\nex^k_{i,j}\}_{i,j=1}^{i,j=N}\subset\mathcal P^k$, $k=1,\ldots, N_p$, corresponding to the discretization of the surface $\Gamma$ using $N_p$ non-overlapping patches, generated using Chebyshev grids in the parameters space $\mathcal H$;  discrete density function $\varphi(\nex_{i,j}^k)= \phi_{i,j}^k$, $i,j=1,\ldots,N.$ $k=1,\ldots, N_p$.}
 \KwResult{Quantities $I_{i,j}^k$ corresponding to the approximate value of $S[\varphi]$ at the grid points $\nex_{i,j}^k$, $i,j=1,\ldots,N$, $k=1,\ldots,N_p$.}

 \For{$k$ from 1 to $N_p$}{
  compute approximate derivatives of the density functions $\varphi$ on the patch $\mathcal P^k$ using FFT-based spectral differentiation of the 2D array $\{\phi^k_{i,j}\}_{i,j=1}^{i,j=N}$\;
 } 
 set $I_{i,j}^k=0$ for $i,j=1,\ldots,N$ and $k=1,\ldots,N_p$;\\
  \For{each grid point $\nex_{i,j}^k$}{
	use approximate derivatives of $\varphi$  at $\nex_{i,j}^k$ to compute the coefficients $c_\ell^S(\nex_{i,j}^k)$, $\ell=1,\ldots,8$, of the interpolating harmonic polynomial $U_S$~\eqref{eq:4};\\
	\For{$m$ from 1 to $N_p$}{
	evaluate the approximate integral $I=\sum_{\nex_{p,q}^m\in \mathcal P^m} f(\nex_{p,q}^m)w^m_{p,q}\approx \int_{\mathcal P^m}f(\ney)\de s$ with $f(\ney)=G(\nex_{i,j}^k,\ney)\lf\{\varphi(\ney)-\p_nU_S(\ney,\nex_{i,j}^k)\rg\}+\frac{\p G(\nex_{i,j}^k,\ney)}{\p n(\ney)}U_S(\ney,\nex_{i,j}^k)$  using Fej\'er's quadrature rule\;
	$I_{i,j}^k\gets I_{i,j}^k + I$\;
	
	}
 }
 \caption{Numerical evaluation of the single-layer operator.\label{alg:sl_eval}}
\end{algorithm}

In our  first 3D numerical example we consider the bean shaped obstacle depicted in Figure~\ref{fig:patches}. The surface of the obstacle, whose exact definition is given in~\cite[Sec.~6.4]{Bruno:2001ima}, is parametrized using six non-overlapping quadrilateral patches, each discretized using the same number $N\times N$ of Chebyshev quadrature points. To test the accuracy of the proposed technique for the numerical evaluation of the four Laplace boundary integral operators, we consider the function $u(\nex) = 1/|\nex-\nex_0|-1/|\nex+\nex_0|$, $\nex_0=(2,2,2)$, which is harmonic in the interior of the bean obstacle. We then evaluate the error in the Green's formulas:
\begin{subequations}\begin{eqnarray}
-\frac{u(\nex)}{2} &=& K[u](\nex)-S[\p_n u](\nex)\qquad \mbox{(SL-DL)},\label{eq:single-double}\\
  -\frac{\p_n u(\nex)}{2} &=& N[u](\nex)-K'[\p_n u](\nex)\qquad \mbox{(ADL-HS)},\label{eq:adj-hyper}
\end{eqnarray}\end{subequations}
for $\nex\in\Gamma$, where all  four Laplace integral operators are numerically approximated by means of the proposed HDI technique combined with Chebyshev integration/differentiation. The errors (in the maximum norm) are displayed in Figure~\ref{convergence_bean}.  Two convergence regimes  can be distinguished. For small $N$ values the error seems to be dominated by the accuracy of the numerical differentiation algorithm, which exhibits super-algebraic convergence. For larger $N$ values, in turn,  the quadrature  errors become dominant and, as expected, they exhibit $O(N^{-3})$ and $O(N^{-2})$ convergence rates for the evaluation of formulae~\eqref{eq:single-double} (involving the single- and double-layer operator) and~\eqref{eq:adj-hyper}  (involving the adjoint double-layer and hypersingular operators), respectively.  The same example is then performed for a parallelepiped obstacle (featuring sharp edges and corners) in Figure~\ref{convergence_parallelepiped}, where the same convergence orders are observed. We note that the accuracies reported in Figure~\ref{convergence_bean} related to evaluations of the single and double-layer operators are similar to those achieved by the kernel regularization method of Beale et al.~\cite{2016CCoPh..20..733B}. While different in spirit, both methods lead to third-order convergence and require simple implementations. Also, both methods can in principle be pursued to higher orders in 3D, but at the expense of incorporation of higher-order derivatives and more complicated implementations. The main appeal of the kernel regularization method in~\cite{2016CCoPh..20..733B} is that it does not depend on surface parametrization. One possible advantage of the HDI method is the fact that is oblivious of the nature of the kernel singularity (it depends only on the algebraic order of that singularity) and as such is directly applicable to evaluations of all four BIO.
\begin{figure}[h!]\centering
 \subfloat[][Bean]{\includegraphics[scale=0.5]{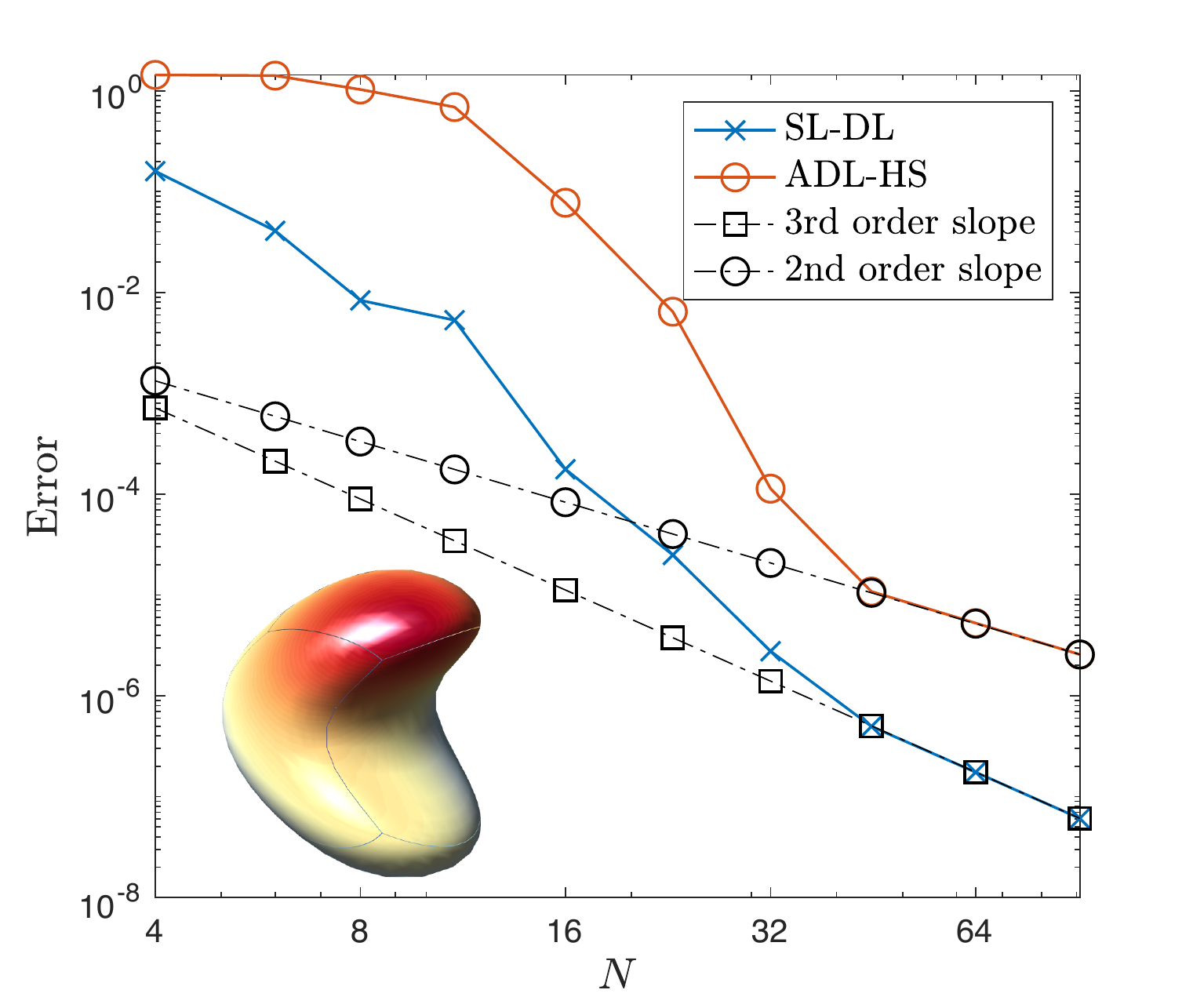}\label{convergence_bean}}\qquad
 \subfloat[][Parallelepiped]{\includegraphics[scale=0.5]{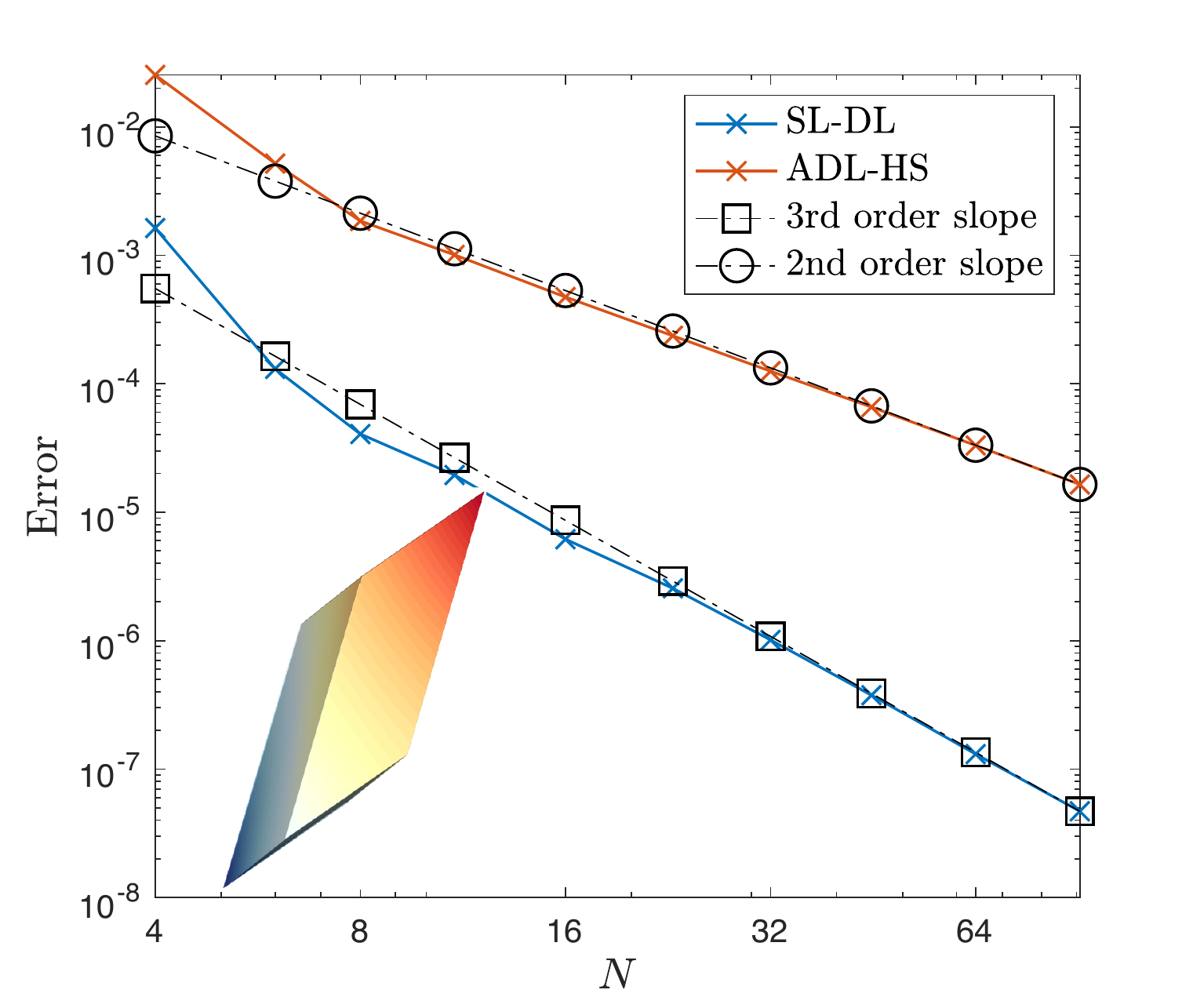}\label{convergence_parallelepiped}}
\caption{Relative errors (in the maximum norm) obtained in the evaluation of the Green's formulae~\eqref{eq:single-double} and~\eqref{eq:adj-hyper} for a bean-shaped (a) and parallelepiped (b) obstacles, using the 3D HDI technique. Here $N$ denotes the number of points per dimension  per patch; thus the total number of points used is $6N^2$ for each obstacle. The harmonic function  $u(\nex) = 1/|\nex-\nex_0|-1/|\nex+\nex_0|$, $\nex_0=(2,2,2)$, is plotted on the surface of each one of  the obstacles.}\label{fig:Green_convergence}
\end{figure}

Finally, in order to demonstrate the accuracy of the proposed technique in the evaluation of nearly singular integral in 3D, we consider a Neumann boundary value problem (BVP) posed in the exterior of two obstacles $\Omega_l$ (cushion) and $\Omega_r$ (sphere) displayed in Figure~\ref{fig:3d_near}(a),  touching at the point $(0,0,0)$. Once again a harmonic function $u(\nex) = 1/|\nex-\nex_l| + 1/|\nex-\nex_r|$ with  $\nex_l\in\Omega_l$  and  $\nex_r\in\Omega_r$ is used to assess the accuracy of the numerical solution. We thus consider the Laplace equation  $\Delta v = 0$ in $\R^3\setminus\{\Omega_l\cup\Omega_r\}$, with the Neumann boundary condition  $\p_n v = \p_n u$ on $\p\Omega_l\cup\p\Omega_r$, and the decay condition $v(\nex)\to 0$ as $|\nex|\to\infty$. Clearly, the exact (and unique) solution of the BVP is~$v=u$ in $\Omega_l\cup\Omega_r$. Using a direct formulation the BVP is posed as the following (uniquely solvable) second-kind integral equation
\begin{equation}
\lf(-\frac{I}{2}+K\rg)v = S[\p_n u]\quad\mbox{on}\quad\p\Omega_l\cup\p\Omega_r,\label{eq:2nd_N3D}
\end{equation}
whose exact solution is $u|_{\p\Omega_l\cup\Omega_r}$. The single- and double-layer operators  in~\eqref{eq:2nd_N3D}, which entail evaluation of both singular and nearly singular integrals, are discretized here utilizing the numerical procedure outlined above in this section. In particular, both surfaces $\p\Omega_l$ and $\p\Omega_r$ are parametrized by means of six non-overlapping patches with $20\times20$ Chebyshev points per patch in the case of the sphere ($\Omega_r$), and $30\times 30$ points in the case of cushion ($\Omega_l$). Nearly singular operators are evaluated using formulae~\eqref{eq:lay_pots}. The resulting discrete linear system is then solved  iteratively using GMRES~\cite{saad1986gmres}, which for this example required 24 iterations to achieve an error tolerance of $10^{-8}$. The integral equation solution achieved a maximum error of $3.77\times10^{-3}$ on the sphere and a maximum error of $1.29\times 10^{-3}$ on the cushion, for the discretization considered. 

The numerical solution of the BVP is then evaluated in both $X\!Z$ and $Y\!Z$ planes (that pass through the touching point). To demonstrate the effectiveness of the proposed HDI technique, the field $v=\mathcal D[v]-\mathcal S[\p_n v]$  is computed with and without taking care of the nearly singular integrands. The logarithm in base ten of the numerical errors in the $X\!Z$-plane (resp. $Y\!Z$-plane) are displayed in Figures~\ref{fig:3d_near}(b) and~\ref{fig:3d_near}(c) (resp. Figures~\ref{fig:3d_near}(d) and~\ref{fig:3d_near}(e)). In particular, the  error obtained at the touching point using the proposed technique is smaller than $6.78\times 10^{-4}$.
  \begin{figure}[h!]
\centering	
\includegraphics[scale=0.5]{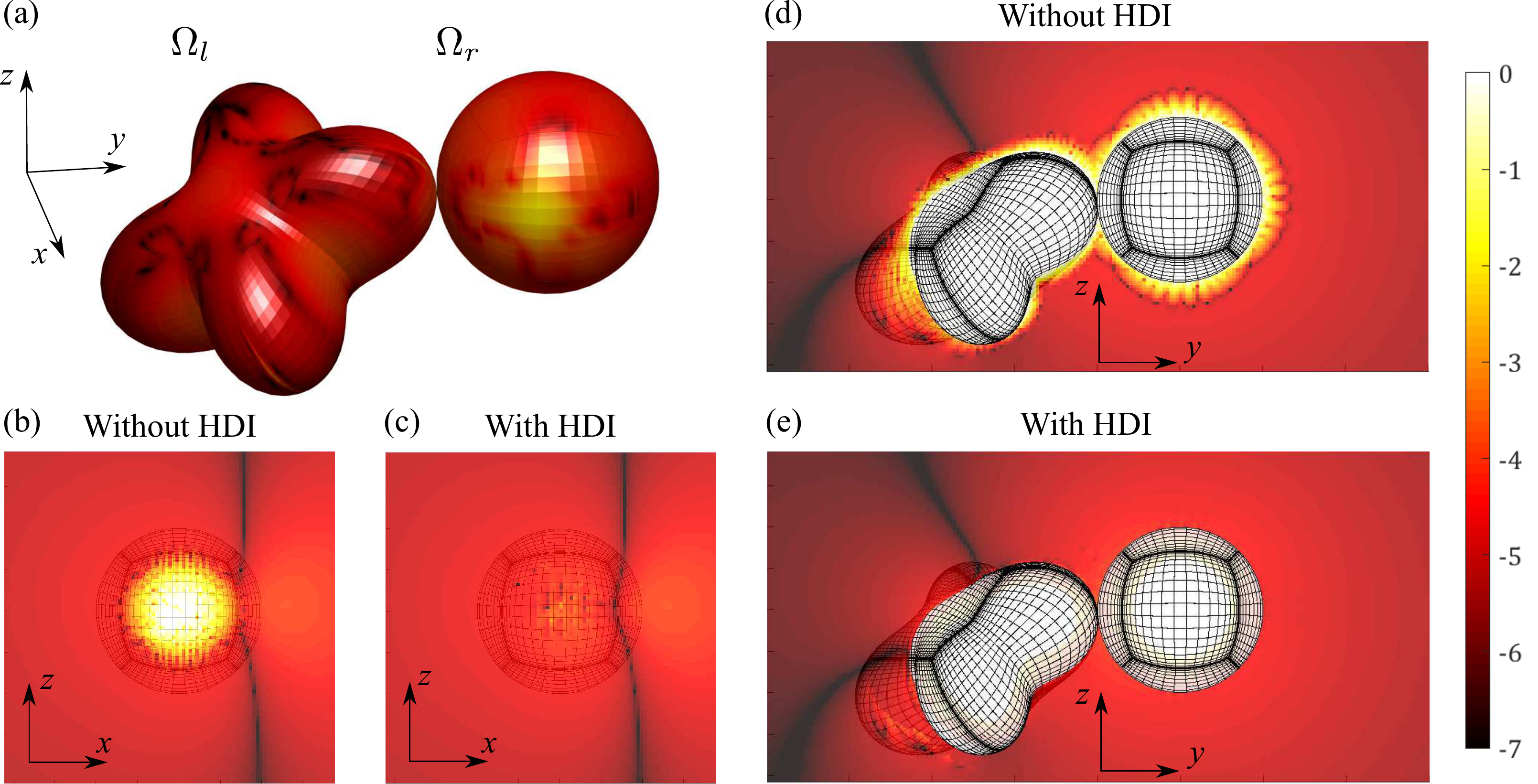}
\caption{Numerical errors in the solution of the Laplace equation in the exterior of two obstacles touching at the point $(0,0,0)\in\p\Omega_l\cap\p\Omega_r$. (a) $\log_{10}$ of the absolute error in the solution of the integral equation~\eqref{eq:2nd_N3D} obtained by means of the proposed  density interpolation technique. (b)-(d): $\log_{10}$ of the absolute errors in approximation of harmonic function $v=\mathcal D[v]-\mathcal S[\p_n v]$ without using any regularization of the nearly singular integrals. (c)-(e): $\log_{10}$ of the absolute errors in approximation of  $v$ using the HDI method. The maximum of the errors displayed in figures (c)~and~(e) is $6.78\times 10^{-4}$.}\label{fig:3d_near}
\end{figure}  

\section{Conclusions} 
We  presented a high-order kernel regularization method based on harmonic density interpolation (HDI) for the
numerical evaluation of integral operators and layer potentials of the
Laplace equation in two and three dimensions. The HDI
method was extended to the numerical evaluation of nearly singular
kernels that arise when considering target points near boundaries. The main advantage of the HDI technique is that it lends itself to straightforward  implementation of second and third order Nystr\"om methods  for evaluations of al four Laplace boundary integral operators as well as nearly singular layer potentials in three dimensions. Possible drawbacks of the HDI method are the need to evaluate simultaneously pairs of boundary integral operators (single-double layer operators and adjoint-double layer-hypersingular operators) as well as its reliance on high-order numerical differentiation that may lead to numerical instabilities for large values of the  interpolation orders. Integration of the HDI methods within a fast solver framework such as FMM will be reported in the near future. Extensions of the kernel regularization method to boundary integral equation approach to Stokes flow and linear elastostatic problems, as well as scattering problems for the Helmholtz, Maxwell and elastodynamics equations, are currently under investigation and will be presented elsewhere. Also, the numerical analysis of HDI methods is subject of ongoing investigation.

\section*{Acknowledgments}
Catalin Turc gratefully acknowledges support from NSF through contract
DMS-1614270. Luiz M. Faria gratefully acknowledges support from NSF through contract CMMI-1727565.

\appendix 
\section{Appendix: Invertibility of the 3D HDI matrix}\label{app:inv_mat}
This appendix is devoted to establishing the invertibility of the matrix $A$ defined~\eqref{eq:mat_entries} for the construction of the harmonic interpolant in 3D. To this end we first note that $A\in\R^{9\times 9}$ in~\eqref{eq:mat_entries} is a block lower triangular matrix of the form
$$
A=\begin{bmatrix}A_{1,1} & O\\ B & A_{2,2}\end{bmatrix},
$$ where $O\in\R^{4\times 5}$ is the zero matrix, $B\in\R^{5\times 4}$, $A_{1,1}\in\R^{4\times 4}$ and $A_{2,2}\in\R^{5\times 5}$. Therefore, it suffices to show that the diagonal blocks $A_{1,1}$ and $A_{2,2}$ are invertible.   

Since the matrix $A$ is  constructed by evaluating both the surface gradient and the Hessian of a certain linear combination of harmonic polynomials at a point $\nex\in\Gamma$, and none of these quantities depend on the surface parametrization, it is expected that---as in 2D---the invertibility of  $A$ is independent of the surface parametrization, provided it is regular, i.e., $\p_1\bnex\wedge \p_2\bnex \neq \bold 0$ at  every point $\nex=\bnex(\bxi)\in\Gamma$. Under this assumption we can then introduce a local re-parametrization around $\nex\in\Gamma$, which up to possibly rotations, takes the form $\tilde\bnex(u,v) = (u,v,f(u,v))$ where  $\tilde\bnex(0,0)=\nex$  with  $f$ being a smooth function. Using this simpler parametrization  and letting  $f_u:=\partial_uf$, $f_v:=\p_v f$ and $g=1+f_u^2+f_v^2$ (the determinant of the metric tensor on $\Gamma$), it is  easy to show that 
\begin{equation}
A_{1,1}=\begin{bmatrix}1&\bold 0\\
0&\p_1{\tilde \bnex}\\
0&\p_2{\tilde \bnex}\\
0&\tilde{\bold n}\end{bmatrix}=\begin{bmatrix}1&0 & 0 & 0\\0 & 1 & 0 & f_u\\0& 0 & 1 & f_v\\ 0 &-f_u/\sqrt{g} & -f_v/\sqrt{g} &1/\sqrt{g}\end{bmatrix}\quad \mbox{at}\quad (u,v)=(0,0),\label{eq:1st_block}\end{equation}
where the unit normal is given by $\tilde{\bold n}=(\p_1\tilde\bnex\wedge\p_2\tilde\bnex)/|\p_1\tilde\bnex\wedge\p_2\tilde\bnex|$. It thus follows from here that $\operatorname{det}(A_{1,1})=\sqrt{g}\neq 0$. In fact, using the identity in the middle of~\eqref{eq:1st_block}, it can be shown that the identity $\operatorname {det}(A_{1,1}) = |\p_1\bnex\wedge\p_2\bnex|$ holds true for any regular parametrization~$\bnex$.

Finally, we turn our attention to the block $A_{2,2}$ which using the parametrization $\tilde \bnex$ takes the form
\[
A_{2,2}=\begin{bmatrix} -f_v/\sqrt{g} &(1-f_u^2)/\sqrt{g}& -f_uf_v/\sqrt{g} & -2f_{u}/\sqrt{g} & -4f_u/\sqrt{g}\\
				-f_u/\sqrt{g} & -f_vf_u/\sqrt{g} & (1-f^2_v)/\sqrt{g} & 2f_{v}/\sqrt{g} &-2f_v/\sqrt{g}\\
				0 & 2f_u & 0 & 2 & 2(1-f_u^2)\\
					1 & f_v & f_u & 0 & -2f_uf_v\\ 
				0 & 0 & 2f_v& -2 &  -2f_v^2
\end{bmatrix}\quad \mbox{at}\quad (u,v)=(0,0).
\]
It turns out that $\operatorname{det}(A_{2,2})=-4g^2\neq 0
$. Therefore, we conclude that  $\operatorname{det}(A)=-4 g^{5/2}$ and thus $A$ is an invertible matrix.

\bibliographystyle{abbrv}
\bibliography{references}
\end{document}